\newcommand\quotient[2]{
        \mathchoice
            {% \displaystyle
                \text{\raise1ex\hbox{$#1$}\Big/\lower1ex\hbox{$#2$}}%
            }
            {% \textstyle
                #1\,/\,#2
            }
            {% \scriptstyle
                #1\,/\,#2
            }
            {% \scriptscriptstyle  
                #1\,/\,#2
            }
    }% to do quotient
\newcommand{\R}{\mathbb{R}} %shortcut for math letter
\newcommand{\Z}{\mathbb{Z}}
\newcommand{\Q}{\mathbb{Q}}
\newcommand{\C}{\mathbb{C}}
\newcommand{\bma}{\bm{\mathrm{a}}}
\newcommand{\bmA}{\bm{\mathrm{A}}}
\newcommand{\bmD}{\bm{\mathrm{D}}}
\newcommand{\bmF}{\bm{\mathrm{F}}}
\newcommand{\bmG}{\bm{\mathrm{G}}}
\newcommand{\bmH}{\bm{\mathrm{H}}}
\newcommand{\bmL}{\bm{\mathrm{L}}}
\newcommand{\bmM}{\bm{\mathrm{M}}}
\newcommand{\bmP}{\bm{\mathrm{P}}}
\newcommand{\bmQ}{\bm{\mathrm{Q}}}
\newcommand{\bmS}{\bm{\mathrm{S}}}
\newcommand{\bmU}{\bm{\mathrm{U}}}
\newcommand{\bmV}{\bm{\mathrm{V}}}
\newcommand{\bmX}{\bm{\mathrm{X}}}
\newcommand{\bmZ}{\bm{Z}}
\newcommand{\rmA}{\mathrm{A}}
\newcommand{\rmF}{\mathrm{F}}
\newcommand{\rmG}{\mathrm{G}}
\newcommand{\rmH}{\mathrm{H}}
\newcommand{\rmK}{\mathrm{K}}
\newcommand{\rmL}{\mathrm{L}}
\newcommand{\rmM}{\mathrm{M}}
\newcommand{\rmP}{\mathrm{P}}
\newcommand{\rmQ}{\mathrm{Q}}
\newcommand{\rmS}{\mathrm{S}}
\newcommand{\rmU}{\mathrm{U}}
\newcommand{\rmX}{\mathrm{X}}
\newcommand{\rmY}{\mathrm{Y}}
\newcommand{\frakS}{\frak{S}}
\newcommand{\scrA}{\mathscr{A}}
\newcommand{\scrF}{\mathscr{F}}
\newcommand{\calO}{\mathcal{O}}
\newcommand{\wtmu}{\widetilde{\mu}}
\newcommand{\ep}{\varepsilon}
\newtheorem{thm}{Theorem}[section]
\newtheorem{coro}[thm]{Corollary}
\newtheorem{lem}[thm]{Lemma}
\newtheorem{prop}[thm]{Proposition}
\newcommand{\bs}{\backslash}  %shortcut for other command
\newcommand{\la}{\langle}
\newcommand{\ra}{\rangle}
\newcommand{\norm}[1]{\left\lVert#1\right\rVert}
\DeclareMathOperator{\SL}{SL}
\DeclareMathOperator{\SO}{SO}
\DeclareMathOperator{\Ad}{Ad}
\DeclareMathOperator{\BS}{BS}
\DeclareMathOperator{\Lie}{Lie}
\DeclareMathOperator{\Leb}{Leb}
\DeclareMathOperator{\rank}{rank}
\DeclareMathOperator{\Split}{split}
\begin{document}

\title[Equidistribution on the boundary]{Equidistribution of translates of a homogeneous measure on the Borel--Serre boundary}
\author[R.Zhang]{Runlin Zhang}

%\address{Department of Mathematics, The Ohio State University\\ 231 W 18th Avenue\\
%Columbus, Ohio 43210-1174}
\email{zhangrunlinmath@outlook.com}
%\date{Sep 29, 2018}

\begin{abstract}
Let $\bmG$ be a semisimple linear algebraic group defined over rational numbers, $\rmK$ be a maximal compact subgroup of its real points and $\Gamma$ be an arithmetic lattice. One can associate a probability measure $\mu_{\rmH}$ on  $\Gamma \bs \rmG$ for each subgroup $\bmH$ of $\bmG$ defined over $\Q$ with no non-trivial rational characters. As G acts on  $\Gamma \bs \rmG$ from the right, we can push forward this measure by elements from $\rmG$. 
By pushing down these measures to $\Gamma \bs \rmG/\rmK$,
we call them homogeneous. It is a natural question to ask what are the possible weak-$*$ limits of homogeneous measures. In the non-divergent case this has been answered by Eskin--Mozes--Shah. In the divergent case Daw--Gorodnik--Ullmo prove a refined version in some non-trivial compactifications of  $\Gamma \bs \rmG/\rmK$ for $\bmH$ generated by real unipotents. In the present article we build on their work and generalize the theorem to the case of general $\bmH$ with no non-trivial rational characters. Our results rely on (1) a non-divergent criterion on $\SL_n$ proved by geometry of numbers and a theorem of Kleinbock--Margulis;  (2) relations between partial Borel--Serre compactifications associated with different groups proved by geometric invariant theory and reduction theory.
\end{abstract}

\newpage

\maketitle

\tableofcontents

\section{Introduction}

Let $\bmG$ be a connected semisimple algebraic group over $\Q$ and $\Gamma\subset \bmG(\Q)$ be an arithmetic lattice. Let $\bmH \leq \bmG$ be a $\Q$-subgroup with no non-trivial $\Q$-characters. Then there exists a unique $\rmH$-invariant ($\rmH:=\bmH(\R)^{\circ}$) probability measure $\mu_{\rmH}$ supported on $\Gamma\bs \Gamma\rmH$. 

Now, suppose that we are given a sequence $(g_n)$ from $\rmG$ (:=$\bmG(\R)^{\circ}$) and a sequence $(\bmH_n)$ of $\Q$-subgroups of $\bmG$ with no non-trivial $\Q$-characters, it is natural to ask
\begin{equation*}
    \text{What are the possible limits of }
    (\mu_{\rmH_n} g_n) 
    \text{ under the weak-}* \text{ topology?}
\end{equation*}
where $\mu_{\rmH_n} g_n$ denotes the push-forward of $\mu_{\rmH_n}$ by $g_n$ induced from the right $\rmG$-action on $\Gamma \bs \rmG$.
This question is interesting from the point of view of counting solutions to some homogeneous Diophantine equations (see \cite{DukRudSar93,EskMcM93,EskMozSha96} etc.) and the Andre--Oort conjecture (see \cite{UllmoYafaev14,KlinglerYafaev14} etc.). 

Building on Ratner's description of unipotent-invariant ergodic measures \cite{Rat91} and the linearization technique of Dani--Margulis \cite{DanMar93}, the following two cases have received a satisfying answer via the work of Eskin, Mozes and Shah (\cite{MozShah95,EskMozSha96}):
\begin{itemize}
    \item[1.] $\bmH_n$'s are generated by $\R$-unipotents;
    \item[2.] $\bmH_n$'s are $\Gamma$-conjugate to a fixed $\bmH$.
\end{itemize}
In both cases it is proved that after passing to a subsequence,
\begin{itemize}
    \item[a.] either $\Gamma\bs\Gamma \rmH g_n$ diverges set-theoretically;
    \item[b.] or $(\mu_{\rmH_n}g_n)$ converges to a homogeneous measure.
\end{itemize}

Let us now fix a maximal compact subgroup $\rmK$ of $\rmG$. Then $\Gamma \bs \rmG /\rmK$ admits compactifications that are finer than the one-point compactification. 
Therefore by projecting this sequence of measures to $[\mu_{\rmH_n}g_n]$ supported on $\Gamma \bs \rmG /\rmK$, there is a chance of refining the case ($a$) above.
Such a result has been obtained recently by 
Daw--Gorodnik--Ullmo (see \cite{DawGoroUll18, DawGoroUllLi19}) with the help of Li. They showed 
\begin{thm}
Let $\overline{\Gamma \bs \rmG /\rmK}^{\BS}$ be the Borel--Serre compactification of $\Gamma \bs \rmG /\rmK$. Then any limit of $[\mu_{\rmH_n}g_n]$ under the weak-$*$ topology is a homogeneous measure assuming $\bmH_n$'s are generated by $\R$-unipotents.
\end{thm}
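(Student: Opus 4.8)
We sketch the strategy. The argument combines Ratner's measure-classification theorem and the Dani--Margulis linearization technique --- in the form packaged by Eskin--Mozes--Shah --- with quantitative non-divergence estimates for unipotent flows and the explicit stratified structure of the Borel--Serre bordification, and it is organized as an induction on $\dim\bmG$. After passing to a subsequence, one first applies the Eskin--Mozes--Shah dichotomy to $(\mu_{\rmH_n}g_n)$ on $\Gamma\bs\rmG$ itself: either this sequence converges to a homogeneous measure on $\Gamma\bs\rmG$, in which case its image on $\Gamma\bs\rmG/\rmK$ is homogeneous and supported in the interior and we are done; or mass escapes every compact subset of $\Gamma\bs\rmG$. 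Since $\overline{\Gamma\bs\rmG/\rmK}^{\BS}$ is compact, any weak-$*$ limit $\mu_\infty$ of $([\mu_{\rmH_n}g_n])$ is automatically a probability measure, so in the divergent case the entire problem is to locate the escaping mass on the boundary and to prove that it is homogeneous there.

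To this end, recall that the Borel--Serre boundary is the union of faces $e(\bmP)$ indexed by the proper $\Q$-parabolic subgroups $\bmP\leq\bmG$, that $e(\bmP)$ fibers (with nilmanifold fibers $\bmN_{\bmP}(\Z)\bs\bmN_{\bmP}(\R)$, and up to finite arithmetic data carried by $\Gamma$) over the locally symmetric space attached to the Levi quotient $\bmL_{\bmP}=\bmP/\bmN_{\bmP}$, and that a neighbourhood of $e(\bmP)$ in $\overline{\Gamma\bs\rmG/\rmK}^{\BS}$ retracts onto $e(\bmP)$ via the maps induced by $\rmP\to\bmL_{\bmP}(\R)$. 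Feeding the quantitative non-divergence estimates of Dani--Margulis and Kleinbock--Margulis for the one-parameter $\R$-unipotent subgroups generating $\rmH_n$ into the Dani--Margulis linearization scheme, one shows that the escaping mass of $\mu_{\rmH_n}g_n$ can accumulate only on faces $e(\bmP)$ with $\bmP$ in finitely many $\Gamma$-conjugacy classes, and that the part of $\mu_\infty$ carried by a given such face is the weak-$*$ limit, under the retraction, of homogeneous measures on $e(\bmP)$ whose projections to the locally symmetric space of $\bmL_{\bmP}$ are translates of $\mu_{\rmH_n'}$, where $\bmH_n'$ is the image in $\bmL_{\bmP}$ of $\gamma_n\bmH_n\gamma_n^{-1}\cap\bmP$ for suitable $\gamma_n\in\Gamma$ and $\rmH_n':=\bmH_n'(\R)^{\circ}$.

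The geometric heart of the matter is justifying this last assertion: that pushing $\mu_{\rmH_n}g_n$ to the face $e(\bmP)$ genuinely produces such a measure --- equivalently, that the $\bmN_{\bmP}$-directions collapse, so the unipotent-radical part equidistributes to a translate of the Haar measure on the nilmanifold fiber and contributes no spurious mass --- and that the image group $\bmH_n'$ is again generated by $\R$-unipotents (this property passes to images) and still has no nontrivial $\Q$-character. This is where reduction theory (Siegel sets adapted to $\bmP$) and a version of the Dani--Margulis linearization that is uniform in the sequence $\bmH_n$ and in the rate of approach to $e(\bmP)$ enter; the outcome, exactly as in Eskin--Mozes--Shah, is that $\bmH_n'$ is aligned with $\Q$-subgroups of $\bmL_{\bmP}$ in a bounded family. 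Since the semisimple part of $\bmL_{\bmP}$ has dimension strictly less than $\dim\bmG$, the inductive hypothesis applies on $e(\bmP)$ and shows that the limit of the projected measures is homogeneous there --- unless mass escapes still further into a deeper face, where one recurses. Assembling these descriptions over the finitely many relevant $\bmP$ and using compactness of $\overline{\Gamma\bs\rmG/\rmK}^{\BS}$ to account for all the mass exhibits $\mu_\infty$ as a homogeneous measure.

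The principal obstacle is the third step. One must cleanly disentangle the ``algebraic'' escape --- which parabolic $\bmP$, and which $\Gamma$-conjugate of $\bmH_n$, is responsible --- from the ``dynamical'' equidistribution taking place on the boundary face, and prove a linearization statement uniform enough to control both simultaneously, while using reduction theory carefully enough to guarantee that the unipotent-radical directions really do collapse under the retraction and leak no mass. Everything else is a matter of combining the Eskin--Mozes--Shah machinery with bookkeeping over the Borel--Serre stratification.
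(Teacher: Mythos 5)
This theorem is not proved in the paper at all: it is the Daw--Gorodnik--Ullmo--Li result (\cite{DawGoroUll18,DawGoroUllLi19}), quoted in the introduction solely as background for the paper's actual contribution, Theorem~\ref{thmMain1Sec1} (where the $\bmH_n$ are a fixed $\bmH$ rather than varying unipotent-generated groups). So there is no in-paper proof of this statement against which to compare word-for-word, but one can still assess the strategy, since the paper describes the reduction skeleton it inherits from DGU--Li in Section~\ref{secOutline}.

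Your outline names the right ingredients --- Eskin--Mozes--Shah on the interior, Dani--Margulis linearization, Kleinbock--Margulis non-divergence, reduction theory, the stratified structure of the Borel--Serre bordification --- and your instinct that the crux is ``algebraic escape vs.\ boundary equidistribution'' is sound. But the sketch stalls exactly at what you yourself flag as ``the principal obstacle'' and never fills it. In the DGU--Li argument (and in this paper, for its generalization) that obstacle is resolved by a precise \emph{relative non-divergence criterion}: after conjugating into a $\Q$-parabolic $\bmP \supseteq \lambda_n^{-1}\bmH_n\lambda_n$, one shows that the projected measures on $\Gamma\cap\rmP\bs\rmP$ are non-divergent \emph{provided} the contraction witnesses $l_{\bmQ,\rmK}(g^{-1}) = \|g^{-1} v_{\bmQ}\|$ stay bounded below over all $\Q$-parabolics $\bmQ$ of the Levi containing the image of $\bmH_n$ --- this is exactly Theorem~\ref{thmNondivCriterionSec3} of the paper and, for the unipotent case you are asked about, is \cite[Theorem 4.6]{DawGoroUll18}. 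That criterion is what converts ``mass escapes'' into ``there exists a specific parabolic responsible'' with a controlled projection, and it is the statement that makes the recursion over boundary faces close; its absence means your proposal never actually closes the loop. You announce the same thing must be done (``one shows that the escaping mass... can accumulate only on faces $e(\bmP)$ with $\bmP$ in finitely many $\Gamma$-conjugacy classes, and that the part of $\mu_\infty$...'') without supplying the mechanism. Your proposed induction on $\dim \bmG$ is a reasonable organizational device but is orthogonal to this issue: it does not substitute for the quantitative criterion, and in the DGU--Li proof the structure is a reduction to a non-divergence criterion rather than an explicit dimension induction. A smaller imprecision: you speak of ``the $\bmN_{\bmP}$-directions collapse,'' but in the Borel--Serre topology it is the $\rmA_{\bmP}$-direction that is forgotten in $e(\bmP) = \rmU_{\bmP}\times\rmX_{\bmP}$; the unipotent-radical direction persists as the nilmanifold fibre, and the content of Theorem~\ref{thmMainThmSec3} is precisely that the $\rmU_{\bmP}$- and $\rmM_{\bmP}$-components of the conjugated measure remain non-divergent, not that they collapse.
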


We will review the definition of the Borel--Serre compactification later in Section \ref{secPreliminary}. For now, let us just note that the boundary, $\overline{\Gamma \bs \rmG /\rmK}^{\BS} -{\Gamma \bs \rmG /\rmK}$, is a disjoint union of finitely many ``strata" of the form 
$(\rmF\cap \Gamma)  \bs \rmF / (\rmF\cap \rmK)$ for some $\Q$-subgroup $\bmF$ of $\bmG$. A homogeneous measure here refers to a measure supported on a single stratum $(\rmF\cap \Gamma) \bs \rmF / (\rmF\cap \rmK)$  and that it is the projection of some homogeneous measure from $\rmF\cap \Gamma \bs \rmF$.

In the present paper we build upon their work and show:

\begin{thm}\label{thmMain1Sec1}
Any limit of $[\mu_{\rmH_n}g_n]$ in the Borel--Serre compactification under the weak-$*$ topology is a homogeneous measure assuming $\bmH_n \equiv \bmH$ for all $n$.
\end{thm}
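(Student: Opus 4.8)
I would first reduce to the divergent case. As $\bmH_{n}\equiv\bmH$, the sequence $(\mu_{\bmH}g_{n})$ consists of translates of a single homogeneous measure, so the Eskin--Mozes--Shah dichotomy gives, after passing to a subsequence, that either $(\mu_{\bmH}g_{n})$ converges weak-$*$ on $\Gamma\bs\rmG$ to a homogeneous probability measure --- whence $[\mu_{\bmH}g_{n}]$ converges to its projection, a homogeneous measure carried by the interior $\Gamma\bs\rmG/\rmK$ --- or $\Gamma\bs\Gamma\rmH g_{n}$ leaves every compact subset of $\Gamma\bs\rmG$. Assume the latter. By compactness of $\overline{\Gamma\bs\rmG/\rmK}^{\BS}$ we may assume $[\mu_{\bmH}g_{n}]\to\bar\mu$, a probability measure charging only the boundary; writing $\bar\mu=\sum_{\bmF}\bar\mu_{\bmF}$ over the finitely many strata $e(\bmF):=(\rmF\cap\Gamma)\bs\rmF/(\rmF\cap\rmK)$, it remains to show (a) that after a further subsequence only one $\bar\mu_{\bmF}$ is nonzero, and (b) that this $\bar\mu_{\bmF}$ is the projection of a homogeneous measure on $(\rmF\cap\Gamma)\bs\rmF$.

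For (a), I would fix a faithful $\Q$-representation $\bmG\hookrightarrow\SL_{N}$ stabilizing a lattice commensurable with $\Z^{N}$, turning $\Gamma\bs\rmG$ into a family of lattices in $\R^{N}$, and recall from reduction theory that $\Gamma g\rmK$ lies near the stratum $e(\bmF)$ exactly when the lattice $g^{-1}\Z^{N}$ has short primitive sublattices along a $\Q$-flag $W_{\bullet}$ of $\Q^{N}$ whose combinatorial type matches the $\Gamma$-conjugacy class of the $\Q$-parabolic $\bmP$ with ${}^{0}\bmP$ conjugate to $\bmF$; equivalently, the integral vectors $w_{i}$ representing $\wedge^{\bullet}W_{i}$ have $\norm{g^{-1}w_{i}}$ small. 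So the escaping mass of $\mu_{\bmH}g_{n}$ concentrates, for large $n$, where $\norm{g^{-1}w_{W}}$ is small along the orbit for some rational subspace $W$. The non-divergence criterion --- built from the Kleinbock--Margulis quantitative non-divergence estimate, to the effect that a polynomial-like family of measures can escape into the cusps only along rational subspaces that remain short, together with the theory of successive minima, which assembles the competing short subspaces at each point into a single nested flag --- then forces all of the escaping mass, after a subsequence, along one $\Q$-flag (necessarily the full flag of short subspaces, equivalently the one whose transverse quotient is non-divergent). Since there are finitely many $\Gamma$-classes of $\Q$-parabolics and a $\Gamma$-translation can be absorbed into $g_{n}$, this flag may be taken fixed, pinning down $\bmP$ and hence the single stratum $\bmF={}^{0}\bmP$ carrying $\bar\mu$.

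For (b), with $\bmP$ fixed I would ``follow the orbit up the cusp of $\bmP$'': the relevant part of $\Gamma\rmH g_{n}$ lies in a Borel--Serre chart $e(\bmF)\times(0,\infty]^{r}$ around $e(\bmF)$ with the $(0,\infty]^{r}$-coordinate tending to $(\infty,\dots,\infty)$, and its image on $e(\bmF)$ is (the projection of) a translate $\mu_{\bmH'}h_{n}$ of a homogeneous measure on $(\rmF\cap\Gamma)\bs\rmF$, for a subgroup $\bmH'\leq\bmF$ determined by the position of $\rmH g_{n}$ relative to the $\bmP$-cusp. Here I would use the comparison between the partial Borel--Serre bordifications attached to $\bmH$ and to $\bmG$ --- established through geometric invariant theory, which attaches via the theory of instability a canonical $\Q$-parabolic of $\bmG$ to the $\bmH$-data, and through reduction theory --- both to see that $\bmH'$ has a fixed $(\rmF\cap\Gamma)$-conjugacy type along the subsequence and to certify that the $(0,\infty]^{r}$-coordinate genuinely escapes. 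Because the flag of (a) is the full short flag, the shadow $\mu_{\bmH'}h_{n}$ is non-divergent in $(\rmF\cap\Gamma)\bs\rmF$, so by Eskin--Mozes--Shah applied within $\bmF$ --- a descending induction on $\dim\bmG$ --- it converges, along a further subsequence, to a homogeneous measure; its projection is $\bar\mu_{\bmF}$, and with (a) this proves the theorem.

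I expect step (a), the coherence of the escaping mass, to be the main obstacle. When $\bmH$ is generated by $\R$-unipotents this coherence is delivered by the Dani--Margulis linearization, but a general $\bmH$ with no $\Q$-characters can carry a large reductive part whose $\R$-split directions redistribute mass among different cusps, and no linearization is available; one must instead extract the rigidity from quantitative non-divergence, which is precisely why the argument passes through $\SL_{N}$ and geometry of numbers --- and the delicate point is setting up Kleinbock--Margulis-type estimates for the non-unipotent orbit $\rmH g_{n}$. A secondary difficulty, in step (b), is keeping the geometric-invariant-theory dictionary rational: matching $\Q$-parabolics of $\bmH$ with $\Q$-parabolics of $\bmG$ and verifying that the resulting identifications of boundary charts are equivariant for the relevant arithmetic groups.
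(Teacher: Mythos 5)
Your strategy matches the paper's architecture: reduce via Eskin--Mozes--Shah to the divergent case, pass through $\SL_N$ and geometry of numbers for the quantitative non-divergence estimate, and use geometric invariant theory and reduction theory to transport parabolic data and Borel--Serre charts back to $\bmG$. The two places you flag as delicate are precisely the two places where the paper does its real work, so your instincts are right. There are, however, two points where the proposal leaves a genuine gap.

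The first, and more serious, is the passage from ``the $\bmH$-stable subspaces are uniformly bounded away from $0$'' to ``Kleinbock--Margulis applies.'' Kleinbock--Margulis requires that \emph{all} rational subspaces be bounded away from $0$ at some point of the translated orbit, whereas the hypothesis of the refined criterion (Theorem \ref{thmNondivCriterionSec3}, and of your step (a)) only controls parabolics containing $\bmH$, i.e.\ $\bmH$-stable subspaces. Your proposal invokes ``successive minima, which assembles the competing short subspaces at each point into a single nested flag,'' but this does not bridge the gap: successive minima organize the short subspaces, they do not manufacture a lower bound for arbitrary subspaces from one on $\bmH$-stable ones. The paper's Proposition \ref{propNonDivCrit} is exactly the missing bridge; it is proved by an induction on $\dim W$ using $(C,\alpha)$-goodness to find a good $o\in\calO$, the finite rational test set $\Xi$ to saturate $W$ into an $\bmH$-stable span $W+\rho(\xi_1)W+\dots+\rho(\xi_k)W$, and the submodular inequality $\norm{A}_g\norm{B}_g\ge\norm{A\cap B}_g\norm{A+B}_g$ (Lemma \ref{lemGeometryOfNumber}) to convert a lower bound on the $\bmH$-stable saturation into a lower bound on $W$ after a bounded $\rho(o)$-translation. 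Without some such argument, quantitative non-divergence simply does not apply.

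The second gap is in step (b), and it is both a small confusion and a missing compatibility check. You speak of comparing the partial bordifications ``attached to $\bmH$ and to $\bmG$''; what is actually compared is $\bmG$ against the ambient $\bmG'=\SL_N$. The point is that Borel--Serre compactifications are not functorial in the group, so from convergence of $[\lambda_n g_n]$ in ${}_\Q\overline{\rmX'}^{\BS}$ toward $e(\bmQ)$ one cannot directly read off convergence in ${}_\Q\overline{\rmX}^{\BS}$. The paper isolates this in Propositions \ref{propLimGammaCell} and \ref{propLimCell}: Hilbert--Mumford on the unstable vector $\oplus v_i$ (integral basis of $\Lie(\bmU_{\bmQ})$ pushed by $\lambda_n^{-1}$) produces a $\Q$-cocharacter $b_t$ of $\bmG$ with $\bmQ_{b_t}\subset\bmQ$; after perturbing to a generic $a_t$, Lemma \ref{lemGoingBtw} makes the $(\bmP_{a_t},\rmK)$-- and $(\bmQ_{a_t},\rmK')$--horospherical coordinates literally agree, and then Siegel-set finiteness (Theorem \ref{thmReducThy}) forces $\lambda_n$ into a single coset $c\Gamma$. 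You should also note the small but non-trivial step that $a_{\bmP}(o_n^{\gamma_n})$ must be shown bounded when $\bmH$ may have nontrivial $\Q$-characters, which the paper handles by rigidity of diagonalizable groups; your proposal does not address why the $\bmA_{\bmP}$-component of the translated $\calO$ cannot itself escape.
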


As a consequence (of one version, see Theorem \ref{thmMainThmSec3} of the above theorem avoiding the condition we put on $\bmH$), we obtain the following non-divergence criterion, which was one of our original motivations.
Here we only assume that $\bmH$ is a $\Q$-subgroup and fix a nonempty bounded open subset $\calO$ of $\rmH$. Then $\calO$ supports a finite measure by restricting the Haar measure (or any other measure equivalent to this one) on $\rmH$ to $\calO$.
Let $\mu_{\calO}$ be its projection to $\Gamma \bs \rmG$.

\begin{thm}\label{thmMain2Sec1}
For every $\ep>0$ and $\delta\in(0,1)$, there exists a compact set $C$ of $\Gamma \bs \rmG$ such that 
for all $g\in \rmG$ satisfying
\begin{equation}\label{EquaCriterionLinearRep}
    \norm{ \Ad(g) v_{\rmP} } \geq \ep
\end{equation}
for all $\Q$-parabolic subgroup $\bmP$ of $\bmG$ containing $\bmH$, we have
\begin{equation*}
    \mu_{\calO}(C g^{-1}) > (1-\delta) \cdot  \mu_{\calO}(\Gamma\bs\rmG).
\end{equation*}
\end{thm}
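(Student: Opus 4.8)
The plan is to reduce the statement to a quantitative non-divergence estimate on a space of lattices, prove that estimate by the Kleinbock--Margulis method, and then translate the Diophantine hypothesis it requires into the parabolic condition \eqref{EquaCriterionLinearRep} by means of reduction theory and geometric invariant theory; in effect this is the argument behind the Borel--Serre non-divergence statement of which Theorem~\ref{thmMain2Sec1} is a consequence. Concretely, I would first fix a faithful $\Q$-representation $\rho\colon\bmG\to\SL(W)$ with closed image and a $\rho(\Gamma)$-stable lattice $W_\Z\subset W_\Q$, taken large enough that the adjoint representation --- hence every parabolic vector $v_\bmP$ --- sits inside the exterior algebra of $W$ with the induced integral structure, and so that $\Gamma g\mapsto\SL(W_\Z)\rho(g)$ is a proper map $\Gamma\bs\rmG\to\SL(W_\Z)\bs\SL(W_\R)$. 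By Mahler's criterion a compact set downstairs is cut out by a uniform lower bound on systoles, so it suffices to find, for given $\ep$ and $\delta$, a number $\eta>0$ such that for every $g$ obeying the hypothesis the set of $h\in\calO$ with $\mathrm{systole}(\rho(hg)W_\Z)<\eta$ has $\mu_\calO$-measure below $\delta\,\mu_\calO(\Gamma\bs\rmG)$.

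For this I would apply quantitative non-divergence. The restricted orbit map $\calO\ni h\mapsto\rho(hg)$ is the fixed polynomial map $\rho|_{\rmH}$ followed by right translation by $\rho(g)$, so it is polynomial of degree bounded in terms of $(\bmG,\rho)$ alone; hence the functions $h\mapsto\norm{\wedge^{j}\rho(hg)\,e_{\Delta}}$, over all $j$ and all primitive $\Delta\subset W_\Z$, are $(C_0,\alpha)$-good on $\calO$ and a fixed dilate of it, with $C_0,\alpha$ depending only on $(\bmG,\rho,\calO)$. The Kleinbock--Margulis theorem then yields $C_1=C_1(\bmG,\rho,\calO)$ such that, provided $\sup_{h\in\calO}\norm{\wedge^{j}\rho(hg)\,e_{\Delta}}\ge\beta$ for all such $j,\Delta$, one has $\bigl|\{h\in\calO:\mathrm{systole}(\rho(hg)W_\Z)<\eta\}\bigr|\le C_1(\eta/\beta)^{\alpha}|\calO|$ for $0<\eta\le\beta$. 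Once a $\beta=\beta(\ep)>0$ is at hand, choosing $\eta$ with $C_1(\eta/\beta)^{\alpha}<\delta$ and letting $C$ be the corresponding Mahler compactum completes the proof.

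The remaining task --- producing $\beta(\ep)>0$ --- is the crux, and I would settle it by contradiction and compactness. From a sequence $g_k$ obeying \eqref{EquaCriterionLinearRep} with the fixed $\ep$, together with primitive $\Delta_k\subset W_\Z$ of a fixed rank $j$ along which $\sup_{h\in\calO}\norm{\wedge^{j}\rho(hg_k)\,e_{\Delta_k}}\to 0$, one passes to a limit to conclude that $g_k$ drives the rational point $[e_{\Delta_k}]$ of $\mathbb{P}(\wedge^{j}W)$ into the $\bmG$-null-cone. By the Hilbert--Mumford--Kempf theory over $\Q$ --- the optimal destabilizing one-parameter subgroup and its parabolic may be chosen $\Q$-rational --- this degeneration is carried by a $\Q$-parabolic $\bmP_k$ of $\bmG$ whose vector $v_{\bmP_k}$ is contracted at a comparable rate, so $\norm{\Ad(g_k)v_{\bmP_k}}\to 0$. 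The delicate point, and the one place where $\calO\subset\rmH$ and the structure of $\bmH$ really intervene, is that $\bmP_k$ can be taken to contain $\bmH$: this is forced by comparing the reduction theory of $\bmG$ with that of $\bmH$ --- the relation between the partial Borel--Serre data attached to the two groups, established through geometric invariant theory and reduction theory --- since a destabilizing parabolic transverse to $\bmH$ would be moved by the $\bmH$-action and could not account for the uniform decay over $\calO$. With $\bmP_k\supseteq\bmH$ we contradict \eqref{EquaCriterionLinearRep}.

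I expect this last step to be the main obstacle: an arbitrary low-covolume rational sublattice of $W$ need not be governed by any $\Q$-parabolic of $\bmG$, let alone one containing $\bmH$, so attaching to the destabilizing direction a parabolic over $\bmH$ --- uniformly in $g$, and without assuming $\bmH$ generated by $\R$-unipotents --- is exactly what requires the reduction-theoretic and GIT comparison that forms the technical core of the paper.
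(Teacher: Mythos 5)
Your scaffolding is correct as far as it goes --- reduce to a quantitative non-divergence estimate à la Kleinbock--Margulis (the paper's Theorem~\ref{thmKleinbockMargulis}), so that everything comes down to producing a lower bound $\beta(\ep)>0$ on $\sup_{h\in\calO}\norm{\rho(hg)\Lambda}$ over all primitive rational sublattices $\Lambda$. But the step where you produce $\beta(\ep)$ is not a proof. After invoking Hilbert--Mumford to get a destabilizing $\Q$-parabolic $\bmP_k$ with $\norm{\Ad(g_k)v_{\bmP_k}}\to 0$, you assert that $\bmP_k$ ``can be taken to contain $\bmH$'' because a transverse parabolic ``would be moved by the $\bmH$-action and could not account for the uniform decay over $\calO$.'' That is exactly the claim that needs to be proved, not a reason it holds. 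Without the containment $\bmP\supseteq\bmH$ the hypothesis \eqref{EquaCriterionLinearRep} over \emph{all} $\Q$-parabolics gives the Eskin--Mozes--Shah criterion already cited in the paper; the entire content of Theorem~\ref{thmMain2Sec1} is that one may restrict to parabolics containing $\bmH$, precisely because $\calO$ ranges inside $\rmH$. Turning your transversality intuition into an estimate requires controlling how the norm of a sublattice degrades under intersections and sums with its $\rho(\bmH)$-translates uniformly in $g$, and that is where the real work lies.

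The paper's actual route is quite different and avoids the issue you flag. There is \emph{no} direct GIT/Hilbert--Mumford derivation of the non-divergence bound for a general $\bmG$ in the paper. Instead: (i) Theorem~\ref{thmNondivCriterionSec3} is proved only for $\bmG=\SL_N$, via Proposition~\ref{propNonDivCrit}, by induction on $\dim W$ using the submodularity inequality $\norm{A}_g\norm{B}_g\geq\norm{A\cap B}_g\norm{A+B}_g$ for primitive $\Z$-submodules of $\Z^n$ (Lemma~\ref{lemGeometryOfNumber}), a finite test set $\Xi$ of rational points of $\bmH$ detecting $\bmH$-stability of a subspace, and $(C,\alpha)$-goodness to pick a favourable $o_l\in\calO$; this is the rigorous form of your transversality intuition, and the paper explicitly notes that Lemma~\ref{lemGeometryOfNumber} is not known to generalize beyond $\SL_n$ --- hence the restriction. (ii) For a general $\bmG$ the paper does not reprove the non-divergence bound directly; it embeds $\bmG\hookrightarrow\SL_N$, proves the full Borel--Serre limit statement Theorem~\ref{thmMainThmSec3} for $\SL_N$, transfers it to $\bmG$ via Propositions~\ref{propLimGammaCell} and~\ref{propLimCell} comparing the partial Borel--Serre compactifications of $\bmG$ and $\SL_N$, and finally reads Theorem~\ref{thmMain2Sec1} off as the $\bmP=\bmG$ case of Theorem~\ref{thmNondivCriterionSec3}, itself a formal consequence of Theorem~\ref{thmMainThmSec3}. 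So the GIT and reduction-theory input in the paper serves to move between $\bmG$ and $\SL_N$, not to attach a destabilizing parabolic containing $\bmH$; you have misattributed its role, and the obstacle you correctly identify is overcome by the $\SL_N$ geometry-of-numbers argument rather than by any GIT comparison.
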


First one fix an integral structure on $\Lie(\bmG)$ and hence on $\wedge^{l}\Lie(\bmG)$ for all $l$.
Then $v_{\rmP}$ is the unique (up to sign) primitive integral vector in $\wedge^{\dim \bmP}\Lie(\bmG)$ that lifts
$[\Lie(\bmP)]$ which naturally lives in the projective space of $\wedge^{\dim \bmP}\Lie(\bmG)$.

When $\bmH$ is generated by $\R$-unipotents, this has been proved in \cite{DawGoroUll18}, extending the work of \cite{DanMar91}.
On the other hand, if Equation (\ref{EquaCriterionLinearRep}) were true for all $\Q$-parabolic subgroups of $\bmG$, not just those containing $\bmH$, then this has been established in \cite{EskMozSha97}. 

This answers a question raised in \cite[Conjecture 2.17]{Zha19} where a special case when $\bmH$ is a maximal $\Q$-split torus was treated, which was used in a counting problem studied there. We hope this general non-divergence criterion will be helpful for other counting problem as well.

\subsubsection*{Organization of the paper}

In Section \ref{secPreliminary} we fix some notations and collect some facts from reduction  theory and Borel--Serre compactifications. In Section \ref{secOutline} we state two main theorems from which Theorem \ref{thmMain1Sec1} and \ref{thmMain2Sec1} can be deduced. In Section \ref{secSL_N} we prove our theorem for $\SL_N$ using arguments from geometry of numbers. The reduction of the general case to this one is explained in Section \ref{secGeneralcase} with the help of geometric invariant theory.

\section{Preliminaries}\label{secPreliminary}

In this section we review the basics of reduction theory and Borel--Serre compactifications. Our main references are \cite{Spr98,BorSer73,BorJi06,Bor2019,DawGoroUll18}.

We start by fixing the following data:

\begin{itemize}
    \item $\bmG$ is a connected reductive group over $\Q$ with no non-trivial $\Q$-character;
    \item $\rmG$ is defined to be the connected component of $\bmG(\R)$ containing the identity and we let $\rmG_{\Q}:= \rmG\cap \bmG(\Q)$;
    \item $\rmK$ is a maximal compact subgroup of $\rmG$ and we let $\iota_{\rmK}$ be the associated Cartan involution $\iota_{\rmK}:\rmG\to\rmG$;
    \item $\rmX$ is defined as $\rmG/\rmK$;
    \item $\Gamma$ is an arithmetic lattice contained in $\rmG$.
\end{itemize}

By \cite[Proposition 1.6]{BorSer73}, we assume that $\iota_{\rmK}$ is an algebraic involution defined over $\R$. 

\subsection{Notations for linear algebraic groups}

For a linear algebraic group $\bmH$ over $\Q$ we define the following

\begin{itemize}
    \item let $\bmX^{*}(\bmH)$ (resp. $\bmX^*(\bmH_{\overline{\Q}})$) be the commutative group of characters of $\bmH$ over $\Q$ (resp. $\overline{\Q}$);
    \item let $\bmX_{*}(\bmH)$ (resp. $\bmX_{*}(\bmH_{\overline{\Q}})$) be the commutative group of co-characters of $\bmH$ over $\Q$ (resp. $\overline{\Q}$);
    \item let ${}^{\circ}\bmH$ be the subgroup of $\bmH$ defined by (see \cite[I.1.1.]{BorSer73})
         \begin{equation*}
            {}^{\circ}\bmH:= \bigcap_{\alpha\in X^*(\bmH)} \ker \alpha^2;
          \end{equation*}
    \item $\rmH$ is the identity connected component of $\bmH(\R)$ and $\rmH_{\Q}$ denotes $\rmH\cap \bmH(\Q)$;
    \item for $h\in\rmH$ and a subgroup $\rmL$, we write ${}^{h}\rmL $ for $h\rmL h^{-1}$.
\end{itemize}

\subsection{Parabolic subgroups}
For a $\Q$-parabolic subgroup $\bmP$ of $\bmG$, which is automatically connected \cite[6.4.10]{Spr98}, we define the following data:

\begin{itemize}
    \item let $\bmU_{\bmP}$ denote the unipotent radical of $\bmP$;
    \item let $\bmL_{\bmP}:= \bmP/\bmU_{\bmP}$, a connected reductive group;
    \item let $\bmS_{\bmP}$ be the maximal $\Q$-split torus contained in the center of $\bmL_{\bmP}$;
    \item let $\bmM_{\bmP}:= ({}^{\circ}\bmL_{\bmP})^{\circ}$. So $\bmL_{\bmP}= \bmM_{\bmP}\cdot \bmS_{\bmP}$ is an almost direct product.
\end{itemize}

 $\bmS_{\bmP}$ acts on the Lie algebra of $\bmP$ by choosing a lift $\widetilde{\bmS}_{\bmP}$. Different choices of lifts give actions that are conjugate to each other by $\bmU_{\bmP}$. Also define:

\begin{itemize}
     \item let $\Phi(\bmS_{\bmP},\bmP)$ be the set of non-zero characters appearing in this representation;
     \item let $\Delta(\bmS_{\bmP},\bmP)$ be the subset of  $\Phi(\bmS_{\bmP},\bmP)$ which generates it via $\Z_{\geq 0}$-linear combinations;
     \item there is a natural order-preserving bijection between
     \begin{equation*}
     \begin{aligned}
            \left\{  \text{subsets of } \Delta(\bmS_{\bmP},\bmP) \right\}
         &\longleftrightarrow
         \left\{
         \Q\text{-parabolic subgroups of }\bmG \text{ containing } \bmP \right\}
         \\
         I &\longleftrightarrow \bmP_I
     \end{aligned}
     \end{equation*}
     where the empty set corresponds to $\bmP$ and the full $\Delta$ corresponds to $\bmG$;
     \item let $\bmS_{\bmP,I}:=\left( \cap_{\alpha\in I} \ker \alpha
     \right)^{\circ}$, then $\bmP_I$ is generated by $\bmZ_{\bmG}(\widetilde{\bmS}_{\bmP,I})$ and $\bmU_{\bmP}$ for any lift $\widetilde{\bmS}_{\bmP,I}$.
\end{itemize}

\subsection{Horospherical coordinates}

Given a parabolic subgroup $\bmP$ of $\bmG$ as in the last section, sections of $\pi_{\bmP}: \bmP \to \bmL_{\bmP}$ are not unique but there exists a unique (see \cite[Corollary 1.9]{BorSer73}) $i_{\rmK}:\bmL_{\bmP} \to \bmP$ defined over $\R$ such that $ \pi_{\bmP} \circ i_{\rmK} =id$ and its image $\bmL_{\bmP,\rmK}$ is stable under $\iota_{\rmK}$.

\begin{equation*}
    \begin{tikzcd}
      \bmP \arrow[d, "\pi_{\bmP}"'] \\
      \bmL_{\bmP} \arrow[u, bend right=70, "i_{\rmK}"']
\end{tikzcd}
\end{equation*}

Now we define
\begin{itemize}
    \item $\bmA_{\bmP,\rmK}:= i_{\rmK}(\bmS_{\bmP})$, $\bmM_{\bmP,{\rmK}}:= i_{\rmK}(\bmM_{\bmP})$  and $\bmL_{\bmP,\rmK}:=i_{\rmK}(\bmL_{\bmP})$.
\end{itemize}

As $\rmK$ will be fixed, we shall drop the dependence on $\rmK$ if no confusion might arise. 

We have a natural map by taking multiplications:
\begin{equation*}
    \begin{aligned}
           \nu:  \rmU_{\bmP}\times \rmA_{\bmP} \times  \rmM_{\bmP}\rmK  \to \rmG 
    \end{aligned}
\end{equation*}

\begin{lem} \cite[Proposition 1.5]{BorSer73}
$\nu$ is a homeomorphism.
\end{lem}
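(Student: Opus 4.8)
The "final statement" in this excerpt is the Langlands decomposition lemma: $\nu: \rmU_{\bmP}\times \rmA_{\bmP}\times \rmM_{\bmP}\rmK \to \rmG$ is a homeomorphism. Here is a plan for proving it.

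\medskip

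The plan is to reduce this to the standard Iwasabi--Langlands decomposition at the level of real Lie groups, keeping track of the rational parabolic structure only insofar as it picks out the distinguished Cartan-stable Levi $\bmL_{\bmP,\rmK}$. First I would observe that on the level of real points the statement is local in nature, so it suffices to check that $\nu$ is a bijection, that it is smooth (hence continuous), and that its differential is everywhere invertible; properness will then be handled separately to upgrade the continuous bijection to a homeomorphism. Smoothness of $\nu$ is immediate since it is the restriction of the multiplication map of the Lie group $\rmG$. For the differential, I would compute $d\nu$ at a point $(u,a,m)$ and use the $\Ad$-action of $\rmA_\bmP$ on $\Lie(\rmG)$: the image of the unipotent factor spans the sum of the strictly positive root spaces $\frak{u}_\bmP = \bigoplus_{\chi\in\Phi(\bmS_\bmP,\bmP)}\frak{g}_\chi$, the image of $\rmA_\bmP \rmM_\bmP\rmK$ spans $\Lie(\rmL_{\bmP,\rmK}) \oplus (\text{the }\iota_\rmK\text{-negative part})$, and since $\Lie(\rmG) = \frak{u}_\bmP \oplus \Lie(\rmL_{\bmP,\rmK}) \oplus \theta(\frak{u}_\bmP)$ as a vector space — where $\theta = d\iota_\rmK$ and one uses that $\rmK\rmM_\bmP$ already accounts for the $\theta$-anti-invariant piece via the Cartan decomposition of $\rmL_{\bmP,\rmK}$ together with $\rmK$ — a dimension count shows $d\nu$ is an isomorphism onto $\Lie(\rmG)$.

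For bijectivity, I would argue as follows. Surjectivity: given $g\in\rmG$, the Iwasawa decomposition $\rmG = \rmP\rmK$ (valid for the real points of a parabolic, since $\rmK$ acts transitively on $\rmG/\rmP(\R)$) writes $g = pk$; then decompose $p\in\rmP$ using the semidirect product $\rmP = \rmU_\bmP \rtimes \rmL_{\bmP,\rmK}$ coming from the splitting $i_\rmK$, and finally decompose the $\rmL_{\bmP,\rmK}$-part using $\rmL_{\bmP,\rmK} = \rmA_\bmP\rmM_\bmP(\rmL_{\bmP,\rmK}\cap\rmK)$ — the Cartan/polar-type decomposition of the reductive group $\rmL_{\bmP,\rmK}$ adapted to its central split torus $\bmS_\bmP$ and to the maximal compact $\rmL_{\bmP,\rmK}\cap\rmK$. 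Absorbing the factor $\rmL_{\bmP,\rmK}\cap\rmK \subset \rmK$ into the $\rmK$ on the right gives $g\in \rmU_\bmP\rmA_\bmP\rmM_\bmP\rmK$. Injectivity: suppose $u_1 a_1 m_1 k_1 = u_2 a_2 m_2 k_2$; projecting to $\rmL_\bmP = \rmP/\rmU_\bmP$ kills the $u_i$, and then on the compact/non-compact sides the $\rmA_\bmP$-component is uniquely determined (it is the image in the split-torus quotient), forcing $a_1 = a_2$, $u_1 = u_2$ after using uniqueness of the $\rmU_\bmP$-part in $\rmU_\bmP\rtimes\rmL_{\bmP,\rmK}$, and then $m_1 m_2^{-1} \in \rmM_\bmP\cap\rmK$ which is absorbed consistently; the key input is that $\bmS_\bmP$ and $\bmM_\bmP$ have finite intersection and that $\rmA_\bmP = i_\rmK(\bmS_\bmP)(\R)^\circ$ meets no compact subgroup nontrivially, so the torus coordinate is honestly free.

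Finally, to promote the smooth bijection with invertible differential to a homeomorphism, I would either invoke the inverse function theorem (giving that $\nu$ is an open map, hence a homeomorphism onto its image, which is all of $\rmG$ by surjectivity) or, more robustly, prove properness of $\nu$ directly: if $\nu(u_j, a_j, m_j k_j) \to g$ in $\rmG$, then the $\rmA_\bmP$-coordinate stays in a compact subset of $\rmA_\bmP$ because $a_j$ is recovered continuously from $\nu(\cdots)$ through the character lattice of $\bmS_\bmP$ (apply the finitely many characters in $\Delta(\bmS_\bmP,\bmP)$ and note their values are bounded above and below along a convergent sequence), and once $a_j$ is controlled the boundedness of $u_j$ and of $m_j\rmK$ follows. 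I expect the main obstacle to be packaging the decomposition of the Levi $\rmL_{\bmP,\rmK}$ into $\rmA_\bmP\rmM_\bmP(\rmL_{\bmP,\rmK}\cap\rmK)$ cleanly — i.e. verifying that the chosen Cartan-stable section $i_\rmK$ from \cite[Corollary 1.9]{BorSer73} makes the central split torus, the semisimple-plus-anisotropic part $\bmM_\bmP$, and the maximal compact of $\rmL_{\bmP,\rmK}$ interact as in the classical Langlands decomposition, and in ensuring the two "$\rmK$"s (the one inside $\rmL_{\bmP,\rmK}$ and the ambient one) are compatibly matched. This is exactly the content one cites \cite[Proposition 1.5]{BorSer73} for, so in the write-up I would give the reduction above and defer that Levi-level statement to the reference.
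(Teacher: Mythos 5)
The paper does not give a proof of this lemma at all: it is stated as a direct citation of \cite[Proposition 1.5]{BorSer73}, so there is no in-paper argument to compare yours against. What you have written is a plan for reproving the Langlands decomposition from scratch, and the overall shape (Iwasawa decomposition $\rmG=\rmP\rmK$, then a decomposition of the Cartan-stable Levi $\rmL_{\bmP,\rmK}$, then an inverse-function-theorem or properness argument to upgrade the continuous bijection) is indeed the standard route, essentially what Borel--Serre do. Two concrete soft spots, though.

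First, your injectivity step begins ``projecting to $\rmL_\bmP=\rmP/\rmU_\bmP$ kills the $u_i$.'' That projection is only defined on $\rmP$, and $u_ia_im_ik_i$ is not in $\rmP$ because $k_i\in\rmK\not\subset\rmP$. The correct order is to first use the uniqueness in $\rmG=\rmP\rmK$ (modulo $\rmP\cap\rmK$) to reduce $u_1a_1m_1k_1=u_2a_2m_2k_2$ to $p_2^{-1}p_1=k_2k_1^{-1}\in\rmP\cap\rmK$ with $p_i=u_ia_im_i\in\rmP$, then check $\rmP\cap\rmK\subset\rmM_\bmP\cap\rmK$ (using $\rmU_\bmP\cap\rmK=\rmA_\bmP\cap\rmK=\{1\}$ and $\iota_\rmK$-stability of $\rmL_{\bmP,\rmK}$), and only then project the $\rmP$-part to $\rmL_\bmP$. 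As written, the order is inverted and the step does not parse.

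Second, and more seriously, your closing paragraph acknowledges the remaining gap — the Levi-level identity $\rmL_{\bmP,\rmK}=\rmA_\bmP\rmM_\bmP(\rmL_{\bmP,\rmK}\cap\rmK)$ — but then proposes to defer it to \cite[Proposition 1.5]{BorSer73}. That is circular: that proposition \emph{is} the lemma being proved. The Levi-level fact does not need that citation; it follows from the fact that $\bmL_\bmP=\bmM_\bmP\cdot\bmS_\bmP$ is an almost direct product with $\bmM_\bmP\cap\bmS_\bmP$ finite, $\rmA_\bmP$ is a connected split-torus factor meeting $\rmK$ trivially, and $\rmK\cap\rmL_{\bmP,\rmK}$ is a maximal compact of $\rmL_{\bmP,\rmK}$ contained in $\rmM_\bmP$, so $\rmL_{\bmP,\rmK}=\rmA_\bmP\times\rmM_\bmP$ up to a finite central subgroup and the $(\rmL_{\bmP,\rmK}\cap\rmK)$-factor is absorbed into $\rmM_\bmP$. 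If you replace the circular citation by this elementary argument (or cite a reference independent of Borel--Serre, e.g.\ the real Langlands decomposition in Knapp), the plan becomes a genuine proof.
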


In light of this, for $g\in \rmG$ we write 
\begin{equation*}
    g= u_{\rmP}(g) \cdot a_{\rmP}(g) \cdot m_{\rmP}(g) \cdot k_{\rmP}(g)
\end{equation*}
with the understanding that $ m_{\rmP}(g) \cdot k_{\rmP}(g)$ is uniquely determined. We shall refer to this as the \textbf{horospherical coordinate} of $g$ associated with $\bmP$ and $\rmK$.

\subsection{Relation between horospherical coordinates}\label{secRelationHoro1}

Given two $\Q$-parabolic subgroups $\bmP\subset\bmP'$ in $\bmG$, 
we can write $\bmP'= \bmP_I$ for some unique $I\subset \Delta(\bmS_{\bmP},\bmP)$.
Then $\bmA_{\bmP,I} = \bmA_{\bmP_I}$ 
(see \cite[III.1.16]{BorJi06}) and 
\begin{itemize}
    \item let $\bmA^I_{\bmP}$ be the $\R$-subtorus of $\bmA_{\bmP}$ defined by taking orthogonal complement of $\bmA_{\bmP,I}$ with respect to the killing form.
\end{itemize}   
Note that $\bmA^I_{\bmP}$ is contained in $\bmM_{\bmP_I,\rmK}$.
\begin{itemize}
    \item Let $\bmU_{\bmP}^I$ be the unipotent $\R$-subgroup defined by $\bmU_{\bmP}\cap \bmM_{\bmP_I,\rmK}$.
\end{itemize}
Then one has $\bmU_{\bmP}=\bmU_{\bmP_I} \rtimes \bmU_{\bmP}^I$ and consequently we have the natural homeomorphism
\begin{equation}\label{equaU_PrelatetoU_P'}
    \rmU_{\bmP}= \rmU_{\bmP_I}\times \rmU_{\bmP}^I .
\end{equation}
Additionally we have the following 
\begin{equation}\label{equaA_PrelatetoA_P'}
    \rmA_{\bmP} = \rmA_{\bmP_I} \times \rmA_{\bmP}^I 
\end{equation}
and 
\begin{equation*}
    \rmM_{\bmP,\rmK} \subset \rmM_{\bmP_I,\rmK}.
\end{equation*}

Consequently, starting from 
\begin{equation*}
    g= u_{\bmP}(g) a_{\bmP}(g) m_{\bmP}(g) k_{\bmP}(g),
\end{equation*}
we write $u_{\bmP}(g) = u_{\bmP,I}(g)u^I_{\bmP}(g)$ according to Equation (\ref{equaU_PrelatetoU_P'}) and $a_{\bmP}(g)=a_{\bmP,I}(g) a^I_{\bmP}(g)$ according to Equation (\ref{equaA_PrelatetoA_P'}). Then
\begin{equation*}
\begin{aligned}
         g =&  (u_{\bmP,I}(g)u^I_{\bmP}(g))\cdot (a_{\bmP,I}(g) a^I_{\bmP}(g))\cdot (m_{\bmP}(g) k_{\bmP}(g))\\
        =& u_{\bmP,I}(g) \cdot a_{\bmP,I}(g)\cdot 
         (
         u^I_{\bmP}(g)a^I_{\bmP}(g)m_{\bmP}(g) k_{\bmP}(g)
         ).
\end{aligned}
\end{equation*}
And therefore
\begin{itemize}
    \item $u_{\bmP_I}(g)=u_{\bmP,I}(g)$;
    \item $a_{\bmP_I}(g)=a_{\bmP,I}(g)$;
    \item $m_{\bmP_I}(g)k_{\bmP_I}(g)=u^I_{\bmP}(g)a^I_{\bmP}(g)m_{\bmP}(g) k_{\bmP}(g)$.
\end{itemize}

\subsection{Relative horospherical coordinates}

Let $\bmP \subset \bmP'=\bmP_I$ be as in the last section. Then $\overline{\bmP}:=\pi_{\bmP_I}(\bmP)\cap \bmM_{\bmP_I}$ is a $\Q$-parabolic subgroup of $\bmM_{\bmP_I}$ and $\overline{\rmK}:=\pi_{\bmP_I}{(\rmK\cap \rmP_I)}$ is a maximal compact subgroup of $\rmM_{\bmP_I}$. Hence one may talk about the horospherical coordinate of $\rmM_{\bmP_I}$ associated with  $\overline{\bmP}$ and $\overline{\rmK}$. 

Observe the following:
\begin{itemize}
    \item $\bmA_{\overline{\bmP},\overline{\rmK}} = \pi_{\bmP_I}(\bmA^I_{\bmP})$ and $i_{\rmK}( \pi_{\bmP_I}(\bmA^I_{\bmP})) = \bmA_{\bmP}^I$;
     \item $\bmU_{\overline{\bmP}}  = \pi_{\bmP}(\bmU^I_{\bmP})$ and $i_{\rmK}( \pi_{\bmP_I}(\bmU^I_{\bmP})) = \bmU_{\bmP}^I$;
     \item  $\bmM_{\overline{\bmP},\overline{\rmK}}= \pi_{\bmP_I}(\bmM_{\bmP,\rmK})$ and $i_{\rmK}( \pi_{\bmP_I}(\bmM_{\bmP,\rmK})) = \bmM_{\bmP,\rmK}$;
     \item $\rmK \cap \rmP_I$ is contained in $\rmM_{\bmP_I,\rmK}$, so $i_{\rmK}(\overline{\rmK})=\rmK \cap \rmM_{\bmP_I,\rmK}$.
\end{itemize}

Hence the horospherical coordinate is
\begin{equation*}
    \rmM_{\bmP_I} = \pi_{\bmP_I}(\rmU^I_{\bmP}) \times 
    \pi_{\bmP_I} (\rmA^I_{\bmP})
    \times \pi_{\bmP_I}(\rmM_{\bmP,\rmK}) \overline{\rmK}
\end{equation*}
and by applying $i_{\rmK}$ we get
\begin{equation*}
    \rmM_{\bmP_I,\rmK} =
    \rmU^I_{\bmP}\times 
    \rmA^I_{\bmP}
    \times 
    \rmM_{\bmP,\rmK} (\rmK \cap \rmM_{\bmP_I,\rmK}).
\end{equation*}
Such isomorphisms shall be referred to as \textbf{relative horospherical coordinates}.

Define 
\begin{itemize}
    \item $\rmX_{\bmP}:=\rmM_{\bmP,\rmK}\rmK/\rmK$, $\rmX:= \rmG/\rmK$.
\end{itemize}

One is referred to the following commutative diagram for a comparison with the the last section.

\begin{equation*}
    \begin{tikzcd}
          \rmU_{\bmP_I}\times \rmA_{\bmP_I} \times \rmX_{\bmP_I}
          \arrow[r,leftrightarrow, "\text{horospherical}"]\arrow[d,leftrightarrow,"\text{relative}"',"\text{horospherical}"]
          & \rmG/\rmK\arrow[r,leftrightarrow,"\text{horospherical}"] 
          & \rmU_{\bmP}\times \rmA_{\bmP} \times \rmX_{\bmP}
          \arrow[d,leftrightarrow,"\text{last}"',"\text{section}"]
          \\
          \rmU_{\bmP_I}\times \rmA_{\bmP_I} \times \rmU^I_{\bmP}\times
          \rmA^I_{\bmP}
          \times \rmX_{\bmP}
          \arrow[rr, "\text{permutate}", "\text{coordinates}"',leftrightarrow] 
          &&\rmU_{\bmP_I}\times  \rmU^I_{\bmP} \times \rmA_{\bmP_I} \times \rmA^I_{\bmP} \times \rmX_{\bmP_I}
    \end{tikzcd}
\end{equation*}

\subsection{Partial Borel--Serre compactification}

For each $\Q$-parabolic subgroup $\bmP$ of $\bmG$, we define
\begin{equation*}
    e(\bmP):= \rmU_{\bmP} \times \rmX_{\bmP}.
\end{equation*}
in terms of the horospherical coordinates by erasing the $\rmA_{\bmP}$-terms.
For instance $e(\bmG)=\rmX$.

The \textbf{partial Borel--Serre compactification} of $\rmX$, as a set, is defined to be
\begin{equation*}
    {}_{\Q}\overline{\rmX}^{\BS}= \rmX \sqcup \bigsqcup e(\bmP)
\end{equation*}
as $\bmP$ ranges over all proper $\Q$-parabolic subgroups of $\bmG$.

For a pair $\bmP\subset \bmP_I$ of $\Q$-parabolic subgroups of $\bmG$, by the relative horospherical coordinates from the last section we have
\begin{equation*}
    e(\bmP_I)= \rmU_{\bmP} \times \rmA_{\bmP}^I \times \rmX_{\bmP}.
\end{equation*}

We equip ${}_{\Q}\overline{\rmX}^{\BS}$ with the unique topology such that the following is true 
(see \cite[III.9.2]{BorJi06}).

Under the horospherical coordinates associated with $(\bmP,\rmK)$, a sequence  $((n_i,a_i,x_i))$ of $\rmX$
converges to $(n_{\infty},x_{\infty})\in e(\bmP)$ if and only if
\begin{itemize}
    \item $\alpha(a_i)\to \infty$ for all $\alpha\in\Delta(\bmA_{\bmP,\rmK},\bmP)$ (equiv. for all $\alpha\in\Phi(\bmA_{\bmP,\rmK},\bmP)$);
    \item $(n_i)$ converges to $n_{\infty}$;
    \item $(x_i)$ converges to $x_{\infty}$.
\end{itemize}

Under the relative horospherical coordinates above, a sequence $((n_i,a_i,x_i))$ from $e(\bmP_I)$ converges to $(n_{\infty},x_{\infty})$ in $e(\bmP)$ if and only if
\begin{itemize}
     \item $\alpha(a_i)\to \infty$ for all
     $\alpha\in I$ (equiv. for all $\alpha \in \Phi(\bmA_{\overline{\bmP},\overline{\rmK}}, \overline{\bmP})$);
    \item $(n_i)$ converges to $n_{\infty}$;
    \item $(x_i)$ converges to $x_{\infty}$
\end{itemize}
where the notation $\overline{\bmP}$ is taken from the last section.

\subsection{Borel--Serre compactifications}
In this subsection we describe how $\rmG_{\Q}$-action naturally intertwines different horospherical coordinates.
By forgetting the $\rmA_{\bmP,\rmK}$-components, this defines an action of $\rmG_{\Q}$ on ${}_{\Q}\overline{\rmX}^{\BS}$.

Fix a parabolic subgroup $\bmP$ over $\Q$. 
For $g\in \bmG(\R)$ we let $c_g: \rmP \to {}^{g}\rmP$ be defined by $c_g(p):=gpg^{-1}$ and it descends to a morphism between $\rmL_{\bmP}\to \rmL_{{}^{g}\bmP}$, which we denote by $\overline{c}_g$. So we have the following commutative diagram
\begin{equation*}
    \begin{tikzcd}
          \rmP \arrow[r, "c_g"]\arrow[d, "\pi_{\bmP}"'] & 
          {}^{g}\rmP \arrow[d, "\pi_{{}^{g}\bmP}"]\\
          \rmL_{\bmP}\arrow[r, "\overline{c}_g"]  & \rmL_{{}^{g}\bmP}.
    \end{tikzcd}
\end{equation*}
If $g$ is contained in $\bmG(\Q)$ then $\overline{c}_g$ carries $\rmS_{\bmP}$ over to $\rmS_{{}^{g}\bmP}$ by definition. Now we observe that this holds more generally for $g \in \bmG(\R)$ as long as ${}^{g}\bmP$ is defined over $\Q$ (otherwise, $\rmS_{{}^{g}\bmP}$ would not be defined anyway).

To see this recall that $\bmP$ and ${}^{g}\bmP$ are actually conjugate over $\bmG(\Q)$ (actually also true over $\rmG_{\Q}$) and the normalizer of a parabolic subgroup is equal to itself, so we can find $p\in {}^{g}\bmP(\R)$ and $q \in \bmG{(\Q)}$ such that $g=pq$. As $\bmS_{{}^{g}\bmP}$ is contained in the center of $\bmL_{{}^g\bmP}$, we have $\overline{c}_{p}(\rmS_{{}^{g}\bmP}) = \rmS_{{}^{g}\bmP}$. 
It follows that 
\begin{equation*}
    \overline{c}_g(\rmS_{\bmP})
    =\overline{c}_{p}\circ \overline{c}_{q} (\rmS_{\bmP}) = \rmS_{{}^{g}\bmP}.
\end{equation*}

In particular this holds for $g=k\in \rmK$. 
Thus $k\rmA_{\bmP,\rmK}k^{-1}= \rmA_{{}^{k}\bmP, \rmK}$ and consequently 
$k\rmM_{\bmP,\rmK}k^{-1}= \rmM_{{}^{k}\bmP, \rmK}$ also holds.

Now we are ready to describe the $\rmG_{\Q}$-action. So let us fix $q\in \rmG_{\Q}$ and write, for simplicity, the horospherical coordinate of $q$ with respect to (${}^{q}\bmP$, $\rmK$) as $q=(u_q,a_q,m_qk_q)$.

Then the following diagram commutes:
\begin{equation*}
    \begin{tikzcd}
          \rmG \arrow[r,"\text{left multiplication by }q"]
          \arrow[d]
          & \rmG \arrow[d]\\
          \rmU_{\bmP} \times \rmA_{\bmP} \times \rmM_{\bmP}\rmK
          \arrow[r,"q \cdot "]
          &  \rmU_{{}^{q}\bmP} \times \rmA_{{}^{q}\bmP} \times \rmM_{{}^{q}\bmP}\rmK
    \end{tikzcd}
\end{equation*}
where the bottom arrow is defined by $(u,a,mk)\mapsto q\cdot(u,a,mk)$ with
\begin{equation*}
    \begin{aligned}
         q\cdot(u,a,mk)
         = \left(
         u_q {}^{a_qm_qk_q}u,a_q {}^{k_q}a, m_q {}^{k_q}m k_q k
         \right).
    \end{aligned}
\end{equation*}

This action is a proper and continuous action (see \cite[III.9.15,17]{BorJi06}).
The quotient
$\Gamma \bs{}_{\Q}\overline{\rmX}^{\BS}$
is compact and is called the \textbf{Borel--Serre compactification} of $\Gamma\bs\rmX$.

Set theoretically 
\begin{equation*}
    \Gamma \bs{}_{\Q}\overline{\rmX}^{\BS} 
    = \bigsqcup_{[\bmP]} \Gamma \cap \rmP \bs e(\bmP)
\end{equation*}
where the disjoint union runs over all $\Gamma$-conjugacy classes of $\Q$-parabolic subgroups of $\bmG$.

\subsection{Siegel sets}

Start with the following data:
\begin{itemize}
    \item let $\bmP_0$ be a $\Q$-parabolic subgroup of $\bmG$;
    \item let $\omega_{\rmM}$ be a compact subset of $\rmM_{0}:=\rmM_{\bmP_0,\rmK}$;
    \item  let $\omega_{\rmU}$ be a compact subset of $\rmU_{0}:=\rmU_{\bmP_0}$;
    \item let $t>0$ be a positive number and let $\rmA_0:=\rmA_{\bmP_0,\rmK}$,
\end{itemize}
we define the (normal) \textbf{Siegel set} $\frakS_{\omega,t}$ associated with $(\bmP_0,\rmK)$ in terms of the horospherical coordinates associated to $(\bmP_0,\rmK)$ by
\begin{equation}
    \frakS_{\omega,t} := \omega_{\rmU} \times \rmA_t \times \omega_{\rmM}\rmK
\end{equation}
where 
\begin{equation*}
    \rmA_t := \left\{
    a\in \rmA_0 \;\mid\;
    \alpha(a)> t, \; \forall\alpha\in \Delta(\rmA_0,\bmP_0)
    \right\}.
\end{equation*}

When we say that $\frakS$ is \textit{\textbf{a}} Siegel set associated with $(\bmP_0,\rmK)$, it is understood that an implicit choice of $\omega_{\rmM}, \omega_{U}$ and $t>0$ is made and $\frakS=\frakS_{\omega,t}$.

The main theorem of reduction theory (see \cite[Theorem 13.1, 15.5]{Bor2019}) is 
\begin{thm}\label{thmReducThy}
Let $\bmP_0$ be a minimal $\Q$-parabolic subgroup of $\bmG$.
\begin{enumerate}
    \item There exists a finite set $F\subset \rmG_{\Q}$ and a Siegel set $\frakS_{\omega, t}$ associated to $(\bmP_0,\rmK)$ such that 
    \begin{equation*}
        \rmG= \Gamma \cdot F \cdot \frakS_{\omega,t};
    \end{equation*}
    \item For any Siegel set $\frakS_{\omega, t}$ associated to $(\bmP_0,\rmK)$ and any $q_1, q_2 \in \rmG_{\Q}$ we have
    \begin{equation*}
        \#
        \left\{
        \gamma \in \Gamma \,\mid\,
        \gamma q_1 \frakS_{\omega,t} \cap q_2 \frakS_{\omega,t} \neq \emptyset
        \right\} <\infty.
    \end{equation*}
\end{enumerate}
\end{thm}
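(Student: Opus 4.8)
The plan is to reduce everything to the classical Siegel property for arithmetic groups, which for $\SL_n$ (or $\GL_n$) is proved by the usual geometry of numbers / successive minima argument, and then transport it to the general $\bmG$ via a faithful $\Q$-rational representation. More precisely, first I would fix an embedding $\rho: \bmG \hookrightarrow \SL_N$ defined over $\Q$ such that $\Gamma$ is (commensurable with) the stabilizer of a lattice $\Lambda_0 \subset \Q^N$, and such that $\rho(\bmP_0)$ lies in the standard Borel of upper triangular matrices with the standard maximal $\Q$-split torus of $\SL_N$ mapping into a torus containing $\rho(\bmA_0)$; this is possible by Borel--Tits, since any minimal $\Q$-parabolic is conjugate into the preimage of a standard one. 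The key point is that the horospherical/Langlands decomposition $\rmG = \rmU_{\bmP_0} \rmA_0 \rmM_0 \rmK$ is compatible, up to bounded error, with the Iwasawa decomposition of $\SL_N(\R)$ relative to $\rho(\rmK) \subset \SO(N)$ (after conjugating $\rmK$ so that $\iota_{\rmK}$ is the standard Cartan involution $X \mapsto -X^{t}$, using \cite[Proposition 1.6]{BorSer73}), and the characters $\Delta(\rmA_0,\bmP_0)$ are non-negative combinations of the restrictions of the standard simple roots of $\SL_N$.

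For part (1), I would invoke the Siegel reduction theory for $\SL_N(\Z)$ acting on the space of unimodular lattices: every lattice in $\R^N$ has a reduced basis, so $\SL_N(\R) = \SL_N(\Z)\, \mathfrak{S}_N$ for a suitable Siegel set $\mathfrak{S}_N$ in $\SL_N(\R)$. Intersecting the $\bmG(\R)$-orbit with this, using that $\bmG(\Q)\cap \SL_N(\Z)\backslash \bmG(\Q)$ breaks into finitely many $\Gamma$-classes of lattices in the $\rho$-image (finiteness of class number, \cite{BorSer73}), one pulls back the reduction to finitely many translates $\Gamma \cdot F \cdot \mathfrak{S}_{\omega,t}$ with $F \subset \rmG_{\Q}$ finite and $\mathfrak{S}_{\omega,t}$ a Siegel set associated with $(\bmP_0,\rmK)$; here one enlarges $\omega_{\rmM}, \omega_{\rmU}$ and decreases $t$ to absorb the bounded discrepancy between the two decompositions and the finitely many coset representatives. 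For part (2), the Siegel finiteness property, the mechanism is: if $\gamma q_1 \frakS_{\omega,t} \cap q_2 \frakS_{\omega,t} \neq \emptyset$, then writing points in horospherical coordinates and using that $\alpha(a) > t$ forces the ``height'' (e.g. the norm of $v_{\bmP_0}$-type vectors, or the covolume of sub-lattices) to be comparable on both sides, one shows $\gamma$ lies in a fixed compact subset of $\rmG$ depending only on $\omega, t, q_1, q_2$; since $\Gamma$ is discrete, only finitely many such $\gamma$ exist. Concretely in $\SL_N$: two reduced bases that share a common lattice point far out in the cusp differ by a matrix of bounded size, a standard consequence of the separation properties of Siegel sets (\cite[Theorem 15.4]{Bor2019}).

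The main obstacle I anticipate is not the $\SL_N$ case, which is classical, but the bookkeeping in transporting both statements through $\rho$: one must check that a Siegel set of $\bmG$ associated with $(\bmP_0, \rmK)$ maps into a Siegel set of $\SL_N$ (up to bounded factors), and conversely that $\rho^{-1}$ of the trace of an $\SL_N$-Siegel set on $\rho(\rmG)$ is contained in a $\bmG$-Siegel set — this requires knowing that the restriction map from $\SL_N$-weights to $\bmA_0$-weights sends positive Weyl chambers into positive Weyl chambers, and that $\rho(\rmM_0\rmK)$ is bounded modulo $\rho(\rmK)$. The cleanest route is to cite \cite[Theorem 13.1, 15.5]{Bor2019} or \cite[Chapter III]{BorJi06} directly, where exactly this passage is carried out; since the excerpt explicitly lists these as references and states the theorem as ``the main theorem of reduction theory'', the honest proof here is a short deduction from the cited sources together with the normalization of $\iota_{\rmK}$ already fixed above, rather than a from-scratch geometry-of-numbers argument.
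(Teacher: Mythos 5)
Your closing recommendation to cite \cite[Theorem 13.1, 15.5]{Bor2019} is exactly what the paper does: the theorem carries no independent proof, only a reference followed by a short remark. But the ``short deduction'' you attribute to ``the normalization of $\iota_{\rmK}$'' misidentifies the adaptation the paper actually needs to make. The gap between Borel's statement and the theorem as stated here is not about making $\iota_{\rmK}$ compatible with an embedding into $\SL_N$ — that is handled elsewhere via \cite{Mos552} and \cite[Proposition 1.6]{BorSer73}. The issue is that Borel's reduction theory is proved for Siegel sets built from a maximal $\Q$-\emph{split} torus in $\bmP_0$, whereas the Siegel sets in this paper are built from $\bmA_{\bmP_0,\rmK}=i_{\rmK}(\bmS_{\bmP_0})$, the $\iota_{\rmK}$-stable lift, which is in general only an $\R$-torus. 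Since $\rmK$ is fixed once and for all by the Borel--Serre construction, one cannot simply conjugate it so that $\bmA_{\bmP_0,\rmK}$ becomes $\Q$-split. The paper's remedy is to invoke \cite[Proposition III.1.11]{BorJi06}: for fixed $\bmP_0$, the horospherical decompositions relative to two maximal compacts $\rmK$ and $\rmK'$ differ only by right multiplication by an element of $\rmU_{\bmP_0}$, so the corresponding Siegel sets agree up to a bounded right $\rmU_{\bmP_0}$-translate, and Borel's theorem for a suitable $\rmK'$ transfers to the given $\rmK$. This one-step transfer is what your proposal omits. The rest of your geometry-of-numbers sketch (embed into $\SL_N$, finiteness of class number, separation of $\SL_N$-Siegel sets) is a reasonable outline of how the cited theorem is itself proved, but it is not rederived in the paper, and reproving it here would be superfluous once the $\Q$-split/$\iota_{\rmK}$-stable discrepancy is reconciled.
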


In \cite{Bor2019}, it was assumed that $\bmA_{\bmP_0,\rmK}$ is a $\Q$-split torus. But the above theorem can be extended lifting this assumption with the help of  \cite[Proposition III.1.11]{BorJi06}. Indeed for a fixed $\Q$-parabolic subgroup $\bmP$, the Siegel sets associated with $(\bmP,\rmK')$ for a different maximal compact subgroup $\rmK'$ of $\rmG$ only differ from those associated with $(\bmP,\rmK)$ by multiplying an element of $\rmU_{\bmP}$ from the \textit{right}.

\section{Outline of the proof}\label{secOutline}

Let us fix the following data in this section:
\begin{itemize}
    \item $\bmG$ is a connected semisimple algebraic group over $\Q$;
    \item $\rmK$ is a maximal compact subgroup of $\rmG$ and $\rmX:=\rmG/\rmK$ is the associated symmetric space;
    \item $\Gamma\subset \rmG_{\Q}$ is an arithmetic lattice;
    \item $\bmH$ is a connected $\Q$-subgroup of $\rmG$;
    \item $\calO$ is a nonempty bounded open subset of $\rmH$;
    \item $\wtmu_H$ is a smooth measure with full support on $\rmH$ and $\wtmu_{\calO}$ denotes its restriction to $\calO$;
    \item we denote by $\mu_{\calO}$ the push-forward measure of $\wtmu_{\calO}$ on $\Gamma\bs\rmG$ and $[\mu_{\calO}]$ its further push-forward on $\Gamma\bs\rmX$;
    \item $(g_n)$ is a sequence in $\rmG$.
\end{itemize}

By the result of \cite{EskMozSha96} and the argument presented in \cite{DawGoroUll18}, Theorem \ref{thmMain1Sec1} is reduced to the following (see \cite[Theorem 5.1]{DawGoroUll18}, \cite[Theorem 4.2]{DawGoroUllLi19} and the paragraph right above \cite[Proposition 5.3]{DawGoroUllLi19})

\begin{thm}\label{thmMainThmSec3}
After passing to a subsequence, there exists 
\begin{itemize}
    \item a $\Q$-parabolic subgroup $\bmP$ of $\bmG$;
    \item a sequence $(\lambda_n)$ in $\Gamma$,
\end{itemize}
such that 
\begin{itemize}
    \item $\bmP$ contains $\lambda_n^{-1}\bmH\lambda_n$ for all $n$
\end{itemize}
and if we write 
\begin{equation*}
    \lambda_n^{-1} g_n =  u_n a_n m_nk_n,
\end{equation*}
the horospherical coordinates with respect to $(\bmP,\rmK)$, then
\begin{itemize}
    \item $\alpha (a_n)\to \infty$ for all $\alpha\in\Delta(\bmA_{\bmP,\rmK},\bmP)$;
    \item $(\lambda_n^{-1}\mu_{\calO}\lambda_n u_n m_n)$ is non-divergent in 
    $\Gamma\cap \rmP \bs\rmP$.
\end{itemize}
\end{thm}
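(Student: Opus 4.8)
The plan is to prove Theorem~\ref{thmMainThmSec3} by combining the non-divergence criterion from geometry of numbers (Theorem~\ref{thmMain2Sec1}, or rather its $\SL_N$-avatar proved in Section~\ref{secSL_N}) with reduction theory, following the strategy of \cite{DawGoroUll18} but replacing their use of Dani--Margulis for unipotent-generated $\bmH$ by the linear-representation criterion. First I would fix a minimal $\Q$-parabolic $\bmP_0$ and apply Theorem~\ref{thmReducThy}(1) to write $g_n = \gamma_n f_n s_n$ with $\gamma_n\in\Gamma$, $f_n$ in a finite set $F$, and $s_n$ in a fixed Siegel set $\frakS_{\omega,t}$; passing to a subsequence we may assume $f_n\equiv f$ is constant. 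The point of this step is that $\gamma_n^{-1}g_n$ now lies in a set that is ``bounded modulo the diagonal part'', so the only source of divergence of $[\mu_{\calO}\gamma_n^{-1}g_n^{-1}]$ in $\Gamma\bs\rmX$ comes from the $\rmA_0$-coordinate $a_{\bmP_0}(s_n)$ running to infinity along some subset of the simple roots.

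Next I would use the linear representation $\wedge^{\bullet}\Lie(\bmG)$ to detect which $\Q$-parabolics $\bmP\supseteq \gamma_n^{-1}\bmH\gamma_n$ (equivalently which $\bmP$ with $\bmH\subseteq {}^{\gamma_n}\bmP$) are the ``escaping directions.'' Concretely, for each $\Q$-parabolic $\bmP$ containing $\bmH$ one looks at $\norm{\Ad(g_n)v_{\bmP}}$; by Theorem~\ref{thmMain2Sec1} the mass of $\mu_{\calO}g_n$ stays in a compact set of $\Gamma\bs\rmG$ \emph{unless} some such norm tends to $0$. So, passing to a further subsequence, let $\bmP$ be a $\Q$-parabolic containing $\bmH$ which is maximal among those with $\norm{\Ad(g_n)v_{\bmP}}\to 0$ (if none exists, $(\mu_{\calO}g_n)$ is itself non-divergent and we take $\bmP=\bmG$, $\lambda_n=e$). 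Here I would invoke reduction theory again, now for $\bmP$ in place of $\bmP_0$: there is a finite set and a Siegel set for $(\bmP,\rmK)$ covering $\rmG$, and $\norm{\Ad(g_n)v_{\bmP}}\to 0$ forces, after extracting $\lambda_n\in\Gamma$ and absorbing a bounded factor, that in the horospherical decomposition $\lambda_n^{-1}g_n = u_na_nm_nk_n$ with respect to $(\bmP,\rmK)$ one has $\alpha(a_n)\to\infty$ for every $\alpha\in\Delta(\bmA_{\bmP,\rmK},\bmP)$ (otherwise $\bmP$ would not be the maximal escaping parabolic — a smaller root set would give a strictly larger parabolic still escaping, contradicting maximality).

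It remains to show $(\lambda_n^{-1}\mu_{\calO}\lambda_n\, u_nm_n)$ is non-divergent in $(\Gamma\cap\rmP)\bs\rmP$. The key is that, having stripped off the genuinely divergent $\rmA_{\bmP}$-direction, the remaining translate should be controlled inside the reductive Levi $\rmM_{\bmP}$. I would project everything under $\pi_{\bmP}:\rmP\to\rmL_{\bmP}$ and use that $\bmH\subseteq {}^{\lambda_n}\bmP$ implies $\pi_{\bmP}(\lambda_n^{-1}\bmH\lambda_n)$ is a $\Q$-subgroup of $\rmL_{\bmP}$ with no nontrivial $\Q$-characters (it has no characters because $\bmH$ doesn't, and passing to the Levi only kills unipotent parts), so the induction hypothesis / the $\SL_N$ criterion applies to $\rmM_{\bmP}$ with its arithmetic lattice $\Gamma\cap\rmM_{\bmP}$. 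One must check: (i) the non-divergence of $\mu_{\calO}g_n$-mass away from $\bmP$-cusps translates, via the horospherical coordinates and the relation between horospherical coordinates of nested parabolics from Section~\ref{secRelationHoro1}, into non-divergence of the $\rmM_{\bmP}$-translate; (ii) no mass escapes to cusps of $(\Gamma\cap\rmP)\bs\rmP$ \emph{other} than through $\rmA_{\bmP}$ — i.e. for every $\Q$-parabolic $\bmP'\subsetneq\bmP$ containing (a conjugate of) $\bmH$, the norm $\norm{\Ad(\lambda_n^{-1}g_n)v_{\bmP'}}$ stays bounded below, which is exactly the maximality of $\bmP$ pushed through the factorization $v_{\bmP'} = v_{\bmP}\wedge(\text{something in }\Lie\bmM_{\bmP})$.

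The main obstacle I anticipate is precisely this last bookkeeping: matching the $\wedge^{\bullet}\Lie\bmG$-criterion on $\rmG$ with the corresponding criterion on the Levi $\rmM_{\bmP}$, i.e. showing that $\Ad(g_n)v_{\bmP'}$ being bounded below for all $\bmP'\supseteq\bmH$ not contained in the ``escaping family'' is equivalent to the relevant vectors $v_{\overline{\bmP'}}\in\wedge^{\bullet}\Lie\bmM_{\bmP}$ being bounded below under $\Ad$ of the $\rmM_{\bmP}$-component $m_n$. This requires a clean comparison of the integral structures on $\wedge^{\bullet}\Lie\bmG$ and $\wedge^{\bullet}\Lie\bmM_{\bmP}$ and control of the $\rmA_{\bmP}$- and $\rmU_{\bmP}$-distortion; this is where geometric invariant theory (as promised in the abstract and Section~\ref{secGeneralcase}) enters, to compare the Borel--Serre boundary strata of $\bmG$ and of $\bmM_{\bmP}$. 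The reduction-theory inputs (covering by Siegel sets, finiteness of intersections) and the extraction of subsequences are routine; the subtlety is entirely in keeping the family ``$\Q$-parabolics containing $\bmH$'' coherent under $\pi_{\bmP}$.
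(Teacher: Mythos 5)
Your proposal has a circularity at its core that the paper is specifically structured to avoid. In your second paragraph you invoke ``Theorem~\ref{thmMain2Sec1}'' to conclude that the mass of $\mu_{\calO}g_n$ stays in a compact subset of $\Gamma\bs\rmG$ unless some $\norm{\Ad(g_n)v_{\bmP}}\to 0$; but for a general semisimple $\bmG$, Theorem~\ref{thmMain2Sec1} is a \emph{consequence} of Theorem~\ref{thmMainThmSec3} (it is the case $\bmP=\bmG$ of Theorem~\ref{thmNondivCriterionSec3}, which in turn is deduced from Theorem~\ref{thmMainThmSec3}, as explained at the end of Section~\ref{secOutline}). The paper only proves the criterion directly for $\bmG=\SL_N$, precisely because the geometry-of-numbers ingredient (Lemma~\ref{lemGeometryOfNumber}) pins the argument to lattices in $\Z^n$; the paper even records explicitly, right after that lemma, that it is unclear how to carry this out for an arbitrary semisimple group. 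So an argument that runs entirely inside $\bmG$ and appeals to the ``$\wedge^{\bullet}\Lie\bmG$-criterion on $\rmG$'' has no independent proof of its main analytic input in the general case.

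Relatedly, you have the direction of the Section~\ref{secGeneralcase} comparison inverted. You say GIT is used ``to compare the Borel--Serre boundary strata of $\bmG$ and of $\bmM_{\bmP}$,'' and you propose an induction in which the non-divergence on $(\Gamma\cap\rmP)\bs\rmP$ is obtained by passing to the Levi of $\bmG$. The paper instead fixes an embedding $\iota:\bmG\hookrightarrow\bmG'=\SL_N$, proves Theorem~\ref{thmMainThmSec3} for $\SL_N$ (Section~\ref{secSL_N}), and then proves Theorem~\ref{thmReduceTHM1toSLN}: the transfer from $\bmG'$ back to $\bmG$. The real work is Propositions~\ref{propLimGammaCell} and~\ref{propLimCell}, which take a sequence $(g_n)$ in $\rmG$ whose image converges to a stratum $e(\bmQ)$ of the Borel--Serre boundary of $\rmX'$, and produce a $\Q$-cocharacter $a_t$ of $\bmG$ with $\bmQ=\bmQ_{a_t}$ so that $([g_n])$ also converges to $e(\bmP_{a_t})$ in ${}_{\Q}\overline{\rmX}^{\BS}$; GIT enters via the Hilbert--Mumford criterion (Theorem~\ref{thmHilbertMumford}) applied to $\oplus_i v_i$ for a basis $\{v_i\}$ of $\Lie\bmU_{\bmQ}$, not via any comparison with $\bmM_{\bmP}$. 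Your ``maximal escaping parabolic'' heuristic and the claimed factorization $v_{\bmP'}=v_{\bmP}\wedge(\cdot)$ for $\bmP'\subsetneq\bmP$ are also imprecise (with $\Lie\bmP'\subset\Lie\bmP$, the exterior power of the smaller algebra is not a wedge extension of $v_{\bmP}$), but these are secondary to the structural gap: without the $\SL_N$-embedding and the boundary-comparison propositions, there is no non-circular source for the non-divergence criterion on general $\bmG$.
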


This is proved in \cite{DawGoroUllLi19} when $\bmH$ is generated by $\R$-unipotents where they also allow $\bmH$ to vary. Actually their proof also works in our case provided a non-divergence criterion is proven. Let us now turn to this. 

First we give some definitions:
\begin{itemize}
    \item let $\bmF$ be a connected reductive linear algebraic group over $\Q$ and $\bmD$
    be a connected linear algebraic $\Q$-subgroup;
    \item $\rmK_{\rmF}$ is a maximal compact subgroup of $\rmF$ and $\Gamma_{\rmF} \subset \rmF_{\Q}$ is an arithmetic subgroup;
    \item fix an $\Ad(\rmK_{\rmF})$-invariant Euclidean norm on $\Lie(\bmF)_{\R}$,
    which induces a $\Ad(\rmK_{\rmF})$-invariant Euclidean norm on $\wedge^l \Lie(\bmF)_{\R}$ for every $l$;
    \item fix an $\Gamma_{\rmF}$-stable integral structure  on $\Lie(\bmF)$, which induces an $\Gamma_{\rmF}$-stable integral structure on $\wedge^l \Lie(\bmF)$ for every $l$;
    \item for a $\Q$-subgroup $\bmP$, let $v_{\rmP}$ be the unique (up to $\pm1$) integral vector in $\wedge^{\dim \bmP} \Lie(\bmF)$ that represents $\bmP$. 
    That is to say, in addition to being integral,
    $v_{\rmP}$ is also required to be proportional to $v_1\wedge...\wedge v_{\dim \bmP}$ for a basis $\{v_1,...,v_{\dim \bmP}\}$ of $\Lie(\bmP)$;
    \item for $g\in \rmF$ and a $\Q$-subgroup $\bmP$ of $\bmF$, define 
    \begin{equation*}
        l_{\bmP,\rmK_{\rmF}}(g):= \norm{g\cdot v_{P}}
    \end{equation*}
    \item for $g\in \rmF$ define 
    \[
    \delta_{\bmF,\rmK_{\rmF},\bmD}(g):= 
    \inf \left\{
     l_{\bmP,\rmK_{\rmF}}(g^{-1}) 
       \:\middle\lvert\:
    \parbox{60mm}
    {
    \raggedright 
    $\bmP $ is a maximal proper $\Q$-parabolic subgroup of  $\bmF$ containing $\bmD$
    }
    \right\}.
    \]
\end{itemize}

Our $l_{\bmP,\rmK_{\rmF}}$ function is very different from  the $d_{\bmP,\rmK_{\rmF}}$ function as presented in \cite[Section 3.3]{DawGoroUllLi19}. 
However, there are only finitely many $\Q$-parabolic subgroups up to $\Gamma_{\rmF}$-conjugacy.
Therefore our 
$\delta_{\bmF,\rmK_{\rmF},\bmD}$ only differs from that of \cite[Section 3.4]{DawGoroUllLi19} by a bounded error. 
Also note that for $\gamma \in \Gamma_{\rmF}$, $\gamma \cdot v_{\rmP} = \pm1 v_{\gamma \bmP \gamma^{-1}}$ for a $\Q$-subgroup $\bmP$. 
So this function descends to $\Gamma_{\rmF}\bs \rmF$.

Now we go back to the set-up of Theorem \ref{thmMainThmSec3}. For a parabolic $\Q$-subgroup $\bmP$ (including the possibility of $\bmP=\bmG$), recall that $\pi_{\bmP}$ is the projection $\bmP \to \bmP/\bmU_{\bmP}$. 

\begin{thm}\label{thmNondivCriterionSec3}
We let $\overline{\Gamma}:= \pi_{\bmP}(\Gamma\cap \rmP)$ and $\overline{\rmK}:=\pi_{\bmP}(\rmK\cap \rmP)$.
For every sequence $(g_n)$ in $\rmU_{\bmP}\cdot \rmM_{\bmP,\rmK}$ and $(\lambda_n)$ of $\Gamma$ such that $\lambda_n^{-1} \bmH \lambda_n$ is contained in $\bmP$ and 
\begin{equation*}
    \liminf_{n\to\infty}
    \delta_{\bmL,\overline{\rmK},\overline{\bmH}_n}(\pi_{\bmP}(g_n)) >0,
\end{equation*}
where we write $\overline{\bmH}_n:= \pi_{\bmP}(\lambda_n^{-1} \bmH \lambda_n)$, 
we have $([\lambda_n^{-1} \mu_{\calO}\lambda_n g_n])$ is sequentially compact on $\Gamma_{\rmP}\bs \rmP$.
\end{thm}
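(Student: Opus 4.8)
The plan is to reduce Theorem~\ref{thmNondivCriterionSec3} to a non-divergence statement inside the reductive quotient $\rmL := \bmL_{\bmP}(\R)^{\circ}$ and then invoke the $\SL_N$ case (Section~\ref{secSL_N}) via an embedding. First I would unwind the geometry: writing $g_n = v_n s_n$ with $v_n \in \rmU_{\bmP}$ and $s_n \in \rmM_{\bmP,\rmK}$, the measure $\lambda_n^{-1}\mu_{\calO}\lambda_n g_n$ lives on $\Gamma_{\rmP}\bs\rmP$, and since $\rmP = \rmU_{\bmP}\rtimes \rmM_{\bmP,\rmK}$ with $\rmU_{\bmP}$ having no rational characters, escape of mass in $\Gamma_{\rmP}\bs\rmP$ can only happen through the reductive direction $\overline{\Gamma}\bs\rmL$ (the unipotent fibers are already compact-type once the base behaves). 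Concretely, I would show that $([\lambda_n^{-1}\mu_{\calO}\lambda_n g_n])$ is sequentially compact provided its push-forward $([\overline{\lambda_n^{-1}\mu_{\calO}\lambda_n}\,\pi_{\bmP}(g_n)])$ to $\Gamma_{\rmL}\bs\rmL$ is sequentially compact, where $\Gamma_{\rmL} = \overline{\Gamma}$; this uses that $\calO$ is bounded, that $\wtmu_{\calO}$ is a finite smooth measure, and a Fubini-type argument along the $\rmU_{\bmP}$-fibration together with boundedness of $\lambda_n^{-1}\calO\lambda_n$-images modulo the relevant lattice — the point being that conjugation by $\lambda_n$ and translation by $g_n$ can be absorbed once the $\rmL$-component is controlled.

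The heart of the matter is therefore the reductive non-divergence: given $\overline{g}_n := \pi_{\bmP}(g_n) \in \rmM_{\bmP}$ (landing in $\rmL$) and $\overline{\bmH}_n = \pi_{\bmP}(\lambda_n^{-1}\bmH\lambda_n)$ with $\liminf_n \delta_{\bmL,\overline{\rmK},\overline{\bmH}_n}(\overline{g}_n) > 0$, I want $([\wtmu_{\overline\calO_n}\,\overline{g}_n])$ non-divergent in $\Gamma_{\rmL}\bs\rmL$, where $\overline{\calO}_n$ is the image of the conjugated orbit piece. The strategy is: (i) choose a faithful $\Q$-representation $\rmL \hookrightarrow \SL_N$ defined over $\Q$ compatible with the integral structures; (ii) observe that maximal proper $\Q$-parabolics of $\bmL$ containing $\overline{\bmH}_n$ pull back to (or are detected by) parabolics relevant to the $\SL_N$ criterion, so that the hypothesis $\delta_{\bmL,\overline{\rmK},\overline{\bmH}_n}(\overline{g}_n)\geq \ep$ translates into the linear-representation lower bound $\norm{\Ad(\overline{g}_n)v_{\rmQ}}\geq \ep'$ for all $\Q$-parabolics $\rmQ$ of $\rmL$ containing $\overline{\bmH}_n$; and (iii) apply the $\SL_N$ non-divergence result proved in Section~\ref{secSL_N} (the quantitative Kleinbock--Margulis-style statement) to conclude that the translated measure does not escape to infinity. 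Reduction theory (Theorem~\ref{thmReducThy}) and the fact that $\delta$ descends to $\Gamma_{\rmL}\bs\rmL$ ensure the bound is uniform over the finitely many $\Gamma_{\rmL}$-conjugacy classes, so the $\liminf$ hypothesis suffices after passing to a subsequence making $\overline{\bmH}_n$ constant up to conjugacy.

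The main obstacle, I expect, is step (ii): matching the intrinsic function $\delta_{\bmL,\overline{\rmK},\overline{\bmH}_n}$ — defined via exterior powers of $\Lie(\bmL)$ and parabolics \emph{of $\bmL$ containing $\overline{\bmH}_n$} — with whatever quantity the $\SL_N$ criterion from Section~\ref{secSL_N} naturally controls, which will be phrased in terms of parabolics of $\SL_N$ (equivalently, stabilizers of flags in $\Q^N$). One must verify that the parabolics of $\SL_N$ that matter are exactly those induced from parabolics of $\bmL$, and that no ``extra'' $\SL_N$-parabolics cause spurious divergence — this is where geometric invariant theory enters, identifying the relevant semistable locus, and where the boundedness of $\calO$ and the constancy of $\overline{\bmH}_n$ (up to $\Gamma$-conjugacy) must be used to get uniformity in $n$. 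A secondary technical point is handling the difference between $\rmM_{\bmP,\rmK}$ and $\rmL_{\bmP}$ (the central $\rmS_{\bmP}$-directions), but since $g_n$ is assumed to lie in $\rmU_{\bmP}\cdot\rmM_{\bmP,\rmK}$ this contributes nothing and can be dispatched by the relative horospherical coordinate bookkeeping from Section~\ref{secRelationHoro1}.
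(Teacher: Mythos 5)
Your proposal takes a genuinely different route from the paper, and it runs into the specific obstruction the paper is explicitly designed to circumvent.

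The paper proves Theorem~\ref{thmNondivCriterionSec3} \emph{directly only when} $\bmG=\SL_N$ (Section~\ref{secSL_N}). There the Levi quotient $\bmL_{\bmP}$ is a product of $\SL_{n_i}$'s and $\bmG_m$'s, so the $\Q$-parabolic subgroups of $\bmL_{\bmP}$ containing $\overline{\bmH}_n$ correspond exactly to the $p^i_\gamma(\bmH)$-stable flags in $\Q^{n_i}$. This is precisely the hypothesis that Proposition~\ref{propNonDivCrit} converts, by an induction using Lemma~\ref{lemGeometryOfNumber} and the finite test set $\Xi$, into the bound $\sup_{o}\norm{gp_\gamma(o)\Lambda_W}\geq\ep$ for \emph{all} rational $W$, which is what Theorem~\ref{thmKleinbockMargulis} needs. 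For general $\bmG$ the paper deliberately does \emph{not} attempt this direct matching. Instead it proves the stronger boundary-convergence statement Theorem~\ref{thmMainThmSec3} for $\SL_N$, transfers that to general $\bmG$ via Propositions~\ref{propLimGammaCell} and~\ref{propLimCell} (GIT, Hilbert--Mumford, and careful Siegel-set bookkeeping), and then observes that Theorem~\ref{thmMainThmSec3} for $\bmG$ formally implies Theorem~\ref{thmNondivCriterionSec3} for $\bmG$. The paper even flags this in Section~\ref{secOutline}: ``we will only be able to prove Theorem~\ref{thmNondivCriterionSec3} directly when $\bmG=\SL_N$.''

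Your step (ii) is therefore a genuine gap, not merely a technical point to be dispatched. The hypothesis $\delta_{\bmL,\overline{\rmK},\overline{\bmH}_n}(\overline g_n)\geq\ep$ controls the vectors $v_{\bmQ}\in\wedge^{\bullet}\Lie(\bmL)$ only for $\Q$-parabolics $\bmQ$ of $\bmL$ that contain $\overline{\bmH}_n$. After embedding $\rmL\hookrightarrow\SL_M$, the Kleinbock--Margulis / EMS criterion requires control of $\norm{\overline g_n \cdot\Lambda_W}$ over all rational $W\subset\Q^M$, and the induction that reduces ``all $W$'' to ``$\overline{\bmH}_n$-stable $W$'' (Proposition~\ref{propNonDivCrit}) is carried out inside $\SL_M$ and produces lower bounds for $\overline{\bmH}_n$-stable subspaces of $\Q^M$, not for parabolics of $\bmL$. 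For a general faithful representation these two families of objects are not in correspondence: an $\overline{\bmH}_n$-stable $W\subset\Q^M$ need not have stabilizer in $\bmL$ equal to (or detected by) a $\Q$-parabolic of $\bmL$, so the hypothesis gives no lower bound on $\norm{\overline g_n\cdot\Lambda_W}$ for such $W$, and no argument in your sketch excludes divergence caused by exactly those $W$. This is precisely why the paper retreats to the stronger Borel--Serre statement, where the GIT/instability machinery (Corollary~\ref{coroUnstaCritEpsilon}, Theorem~\ref{thmHilbertMumford}) can be applied to identify the relevant cocharacter and hence the relevant parabolic of $\bmG$ itself, rather than trying to match parabolics of $\bmL$ against subspaces of $\Q^M$.

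Your step (i), the reduction from $\Gamma_{\rmP}\bs\rmP$ to $\Gamma_{\rmL}\bs\rmL$ using compactness of $(\Gamma\cap\rmU_{\bmP})\bs\rmU_{\bmP}$, is fine and is essentially what the paper does in the $\SL_N$ case via the finite cover $\Gamma\cap\rmP\bs\rmP\approx\prod\SL_{n_i}(\Z)\bs\SL_{n_i}(\R)\times\prod\R_{>0}$. The problem is entirely in what happens after you are reduced to the Levi for a $\bmG$ that is not $\SL_N$.
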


Theorem \ref{thmMain2Sec1} is the case when $\bmG=\bmP$.

The proof presented in \cite[Section 5.3]{DawGoroUllLi19} actually shows that if Theorem \ref{thmNondivCriterionSec3} is true for all $\Q$-parabolic subgroup $\bmP$ of $\bmG$, then Theorem \ref{thmMainThmSec3} is true. 
Moreover Theorem \ref{thmNondivCriterionSec3} has been verified in \cite[Theorem 4.6]{DawGoroUll18} when $\bmH$ is generated by $\R$-unipotents, extending previous results of \cite{DanMar91}.

However, we will only be able to prove Theorem \ref{thmNondivCriterionSec3} directly when $\bmG=\SL_N$. By the last paragraph this implies Theorem \ref{thmMainThmSec3} for $\SL_N$. For general $\bmG$, we choose an embedding into some $\SL_N$. What we show is that we can deduce Theorem \ref{thmMainThmSec3} for $\bmG$ from the statement for $\SL_N$. Note that Borel--Serre compactifications do not have ``functorialities'', so this is a non-trivial task. As Theorem \ref{thmMainThmSec3} formally implies Theorem \ref{thmNondivCriterionSec3} we finally get it for all $\bmG$ and $\bmP$.

\section{Case of SL$_N$}\label{secSL_N}

\subsection{Non-divergence criterion in general}

In this section we recall some non-divergence criterion presented in \cite{DanMar91,KleMar98,EskMozSha97}.

Let $\bmH$ be a $\Q$-subgroup of $\SL_N$ and $\calO\subset \rmH$ be a nonempty open bounded subset.
Without loss of generality we assume that $\calO= (h_0 \exp{(O)})^{-1}$ for some $h_0\in \rmH$ and $O \subset \Lie(\rmH)$.
We equip $\calO$ with a smooth measure and push forward this to a measure $[\mu_{\calO}]$ supported on $\rmY:=\SL_N(\Z)\bs\SL_N(\R)/\SO_N(\R)$.

For two positive number $C,\alpha$, we say that a function $f: O \to \R$ is $(C,\alpha)$-\textbf{good} (see \cite[Section 3]{KleMar98}) iff for all open balls $B\subset O$ and $\ep>0$, one has
\begin{equation*}
    \frac{1}{\Leb (B)} \Leb
    \left\{ 
     x\in B \,\vert\,
     |f(x)|\leq \ep
    \right\}
    \leq C \cdot \left( 
    \frac{\ep}{\sup_{x\in B} |f(x)|}
    \right)^{\alpha}.
\end{equation*}

Let us fix a sup-norm $\norm{\cdot} _{\sup}$ on $\R^N$ and all of its exterior powers. For each $l$-dimensional $\Q$-linear subspace $W$ of $\Q^N$, let $w_1,...,w_l$ be a basis of $W\cap \Z^N$ and we define $\Lambda_W:=w_1\wedge ...\wedge w_l$. This is independent of the choice of the basis up to $\pm1$.
Then it is known (see \cite[Section 3]{EskMozSha97}, \cite[Section 3]{KleMar98}) that there exists $C,\alpha>0$ such that for all $\Q$-subspaces $W$, for all $g,\gamma \in\rmG$, the map
\begin{equation}\label{equaDefiC,AlphaGood}
    x \mapsto \norm{g^{-1}\gamma h_0 \exp(x) \gamma^{-1}\Lambda_W}_{\sup}
\end{equation}
is a $(C,\alpha)$-good function on $O$.

A theorem of Kleinbock--Margulis \cite[Theorem 5.2]{KleMar98} (compare \cite[Theorem 3.5]{EskMozSha97}) implies that 
\begin{thm}\label{thmKleinbockMargulis}
For any $\delta,\ep>0$, there exists a compact set $C$ of $\rmY$  such that for every $g,\gamma \in \rmG$ satisfying
\begin{equation*}
    \sup_{x\in O} \norm{g^{-1}\gamma h_0 \exp(x) \gamma^{-1}\Lambda_W} \geq \ep
\end{equation*}
for all rational subspaces $W$ of $\Q^N$, we have
\begin{equation*}
    [\mu_{\calO}]
    \left\{
    x\in \calO \,\vert\,
    [\gamma o \gamma^{-1} g] \notin C
    \right\}
    \leq \delta [\mu_{\calO}](\rmY).
\end{equation*}
\end{thm}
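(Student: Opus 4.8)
The plan is to deduce the statement from the Kleinbock--Margulis non-divergence theorem \cite[Theorem 5.2]{KleMar98}, the essential input being that, by \cite{KleMar98,EskMozSha97}, the maps appearing in \eqref{equaDefiC,AlphaGood} are $(C,\alpha)$-good with constants $C,\alpha$ \emph{uniform} in $g$, in $\gamma$, and in the rational subspace $W$. Write $h(x):=g^{-1}\gamma h_0\exp(x)\gamma^{-1}\in\rmG$ for $x\in O$. Under the identification of $\rmY$ with the space of unimodular lattices of $\R^N$ modulo rotation, in which $\SL_N(\Z)\,m\,\SO_N(\R)$ corresponds to $m^{-1}\Z^N$, the point $[\gamma o\gamma^{-1}g]$ attached to $o=(h_0\exp x)^{-1}\in\calO$ is the lattice $h(x)\Z^N$, and for every rational subspace $W$ the quantity $\norm{h(x)\Lambda_W}$ is, up to a bounded factor, the covolume in $h(x)\Z^N$ of the primitive sublattice spanned by $W$ --- so these are precisely the quantities controlled in \cite[Theorem 5.2]{KleMar98} and in the assumed lower bound. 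By Mahler's compactness criterion there are compact subsets $K_\eta\subset\rmY$ ($\eta>0$) with $\bigcup_{\eta>0}K_\eta=\rmY$ such that $[m]\notin K_\eta$ exactly when $m^{-1}\Z^N$ has a nonzero vector of norm $<\eta$. Moreover $[\mu_\calO]$ is the image of a smooth measure $\nu$ on $\calO$ with $\nu(\calO)=[\mu_\calO](\rmY)\in(0,\infty)$, and $\nu$, transported to $O$ along $x\mapsto(h_0\exp x)^{-1}$, is of the form $\psi(x)\,\diffx$ with $\psi$ continuous near $\overline O$ and hence bounded there. Thus it suffices to produce, for each $\delta,\ep>0$, an $\eta=\eta(\delta,\ep)>0$ with
\[
\nu\{\,o\in\calO:\ h(x)\Z^N\ \text{has a nonzero vector of norm}<\eta\,\}\ \le\ \delta\,\nu(\calO),
\]
after which $K_\eta$ is the desired compact set.

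Fix once and for all, independently of $g$ and $\gamma$: a bounded open $O'\subset\Lie(\rmH)$ with $\overline O\subset O'$; a ball $\widehat B$ with $\overline O\subset\widehat B$; and a finite cover $\overline O\subset\bigcup_{i=1}^{m}B_i$ by open balls $B_i\subset\widehat B$ whose dilates required by \cite[Theorem 5.2]{KleMar98} lie in $O'$. By \cite{KleMar98,EskMozSha97} the maps $x\mapsto\norm{h(x)\Lambda_W}$ are $(C,\alpha)$-good on $O'$ with $C,\alpha$ independent of $g,\gamma,W$; although \eqref{equaDefiC,AlphaGood} is stated only on $O$, the same references give this on any fixed bounded open set. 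Now comes the \emph{propagation step}. Fix $g,\gamma$ as in the hypothesis and a rational $W$, and set $\phi:=\norm{h(\cdot)\Lambda_W}$, so that $\sup_{\widehat B}\phi\ge\sup_O\phi\ge\ep$ by the assumed lower bound. If $\sup_{B_i}\phi\le\rho$ for some $i$, then $B_i\subset\{x\in\widehat B:\phi(x)\le\rho\}$, so $(C,\alpha)$-goodness on the ball $\widehat B$ yields $\Leb(B_i)\le C\,(\rho/\ep)^{\alpha}\Leb(\widehat B)$, i.e. $\sup_{B_i}\phi\ge\ep\bigl(\Leb(B_i)/(C\,\Leb(\widehat B))\bigr)^{1/\alpha}$. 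Setting $\rho:=\ep\,\min_{1\le i\le m}\bigl(\Leb(B_i)/(C\,\Leb(\widehat B))\bigr)^{1/\alpha}>0$, which depends only on $\ep$ and the fixed data, we conclude $\sup_{B_i}\norm{h(x)\Lambda_W}\ge\rho$ for every $i$ and every rational $W$.

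This is exactly the non-degeneracy hypothesis of \cite[Theorem 5.2]{KleMar98}. Applying that theorem on each $B_i$ to the map $h$ --- its remaining hypothesis, $(C,\alpha)$-goodness of $x\mapsto\norm{h(x)\Lambda_W}$ on the relevant dilate of $B_i$, holding inside $O'$ --- one gets, for every $\eta\in(0,\rho]$,
\[
\Leb\{\,x\in B_i:\ h(x)\Z^N\ \text{has a nonzero vector of norm}<\eta\,\}\ \le\ c\,(\eta/\rho)^{\alpha}\Leb(B_i),
\]
with $c=c(N,\dim\rmH,C,\alpha)$. Summing over $i$, using that the $B_i$ cover $O$, and bounding the $\nu$-measure of a subset of $O$ by a fixed multiple of its Lebesgue measure (as $\psi$ is bounded on $\overline O$), we obtain $\nu\{o\in\calO:\ h(x)\Z^N\ \text{has a nonzero vector of norm}<\eta\}\le c'(\eta/\rho)^{\alpha}$ with $c'$ depending only on the fixed data (in particular not on $g,\gamma$). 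Taking $\eta:=\rho\cdot\min\bigl\{1,\,(\delta\,\nu(\calO)/c')^{1/\alpha}\bigr\}$ makes the right-hand side at most $\delta\,\nu(\calO)=\delta\,[\mu_\calO](\rmY)$, and $K_\eta$ serves as the desired compact set, the same for all $g,\gamma$ satisfying the hypothesis.

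The genuinely hard analytic content --- that a family of $(C,\alpha)$-good maps cannot drive too much of its parameter measure into the cusp --- is entirely contained in the cited Kleinbock--Margulis theorem, so what remains is essentially bookkeeping. The one point requiring real care is \emph{uniformity}: the cover $\{B_i\}$, the constants $C,\alpha$, and hence $\rho$, $c$ and $c'$, must all be chosen independently of $g$, of $\gamma$, and of the infinitely many rational subspaces $W$ --- which is exactly what \eqref{equaDefiC,AlphaGood} together with the propagation step deliver --- and one must keep track of the left/right and inverse conventions that turn the coset $[\gamma o\gamma^{-1}g]$ into the lattice $h(x)\Z^N$ and into the vectors $\Lambda_W$.
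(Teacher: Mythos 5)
The paper itself does not prove this theorem: it simply states it as a consequence of \cite[Theorem 5.2]{KleMar98} (and compares with \cite[Theorem 3.5]{EskMozSha97}), so there is no proof in the paper to compare against line by line. What you have written is a correct and complete deduction of the stated result from those references, and the overall route — reformulating in terms of unimodular lattices modulo rotation, invoking Mahler's criterion, and quoting the Kleinbock--Margulis quantitative non-divergence for $(C,\alpha)$-good families — is exactly what the paper's citation is pointing at. The genuinely nontrivial part of your write-up, the \emph{propagation step}, is correct: $(C,\alpha)$-goodness on a ball $\widehat B$ containing $O$ turns $\sup_O\phi\ge\ep$ into $\sup_{B_i}\phi\ge\rho$ with a uniform $\rho=\rho(\ep)$, which is exactly the hypothesis needed to run \cite[Theorem 5.2]{KleMar98} on each $B_i$.

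Two small remarks. First, the covering-plus-propagation is slightly more than is strictly needed: since $O$ is fixed once and for all, one may instead take a single ball $B\supset O$ whose dilate $\widetilde B=3^N B$ still lies in the domain of $(C,\alpha)$-goodness, observe that $\sup_B\phi\ge\sup_O\phi\ge\ep$ already supplies the hypothesis of \cite[Theorem 5.2]{KleMar98} with $\rho=\ep$, and then bound $\Leb\{x\in O:\text{bad}\}\le\Leb\{x\in B:\text{bad}\}\le c(\eta/\ep)^\alpha\Leb(B)$, using the fixed ratio $\Leb(B)/\nu(\calO)$ to finish; alternatively, \cite[Theorem 3.5]{EskMozSha97} is stated directly for such a domain and measure, so the propagation step would be entirely subsumed. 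Your argument is of course equally valid and perhaps more robust. Second, for the $(C,\alpha)$-goodness to be applied on $\widehat B$ itself (not just on the $B_i$'s), you should also record $\widehat B\subset O'$; as you note, the references give $(C,\alpha)$-goodness on any fixed bounded open set with uniform constants, so this costs nothing, but it should be said.
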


\subsection{Refined non-divergence criterion for SL$_N$}

In this section we prove Theorem \ref{thmNondivCriterionSec3} for $\bmG=\SL_N$. Before the proof let us note two things.

First, there is a bijection between 
\begin{equation*}
\begin{aligned}
     &\{\text{maximal proper }\Q\text{-parabolic subgroups of } \SL_N  \}\\
    &\longleftrightarrow
    \{
    \text{non-trivial }\Q\text{-subspaces of }\Q^N
    \}
\end{aligned}
\end{equation*}
via taking the stabilizer of a $\Q$-subspace.
Moreover, if a maximal $\Q$-parabolic subgroup $\bmP$ corresponds to a rational subspace $W$, then up to a bounded error,
\begin{equation*}
    l_{\bmP,\SO_N(\R)}(g) =
    \norm{g\cdot v_{\rmP}}
    \approx \norm{g \cdot \Lambda_W}.
\end{equation*}

Second, in the set-up of Theorem \ref{thmNondivCriterionSec3} when $\bmG=\SL_N$, all possible $\bmL_{\bmP}$'s are products of some $\SL_{n_i}$'s and $\bmG_m$'s. 
So up to a finite cover,
\begin{equation*}
    \Gamma \cap  \rmP\bs \rmP \approx \prod \SL_{n_i}(\Z)\bs \SL_{n_i}(\R) \times \prod \R_{>0}.
\end{equation*}
Let $\pi_{\Split}$ be the projection from $\bmP$ to $\prod \bmG_m$, then every $g_n$ as in the theorem vanishes and 
\begin{equation*}
    h\mapsto \pi_{\Split}(\lambda_n^{-1} h\lambda_n)
\end{equation*}
 descends to morphisms between tori. Therefore by rigidity of diagonalizable groups \cite[Proposition 3.2.8]{Spr98}, after passing to a subsequence, these morphisms are independent of $n$. 
Hence $(\lambda_n^{-1} \mu_{\calO}\lambda_n g_n)$ is non-divergent in $\Gamma \cap \rmP \bs \rmP$ iff 
$(\overline{\pi}_i(\lambda_n^{-1} \mu_{\calO}\lambda_n g_n))$ is so for every projection 
$\overline{\pi}_i : \Gamma \cap \rmP \bs \rmP \to \SL_{n_i}(\Z)\bs \SL_{n_i}(\R)$. Write $\pi_i: \bmP \to \SL_{n_i}$ for the corresponding projection for groups.

Let $\bmX(\bmH,\bmP):=\{g\in\bmG,\, g\bmH g^{-1}\subset \bmP\}$ and for every $i$, $\gamma \in \bmX(\bmH,\bmP)\cap \Gamma$, define $p^i_{\gamma}: \bmH \to \SL_{n_i}$ by
\begin{equation*}
   p^i_{\gamma}:\, h \mapsto \pi_i(\gamma h \gamma^{-1}).
\end{equation*}
Then up to a bounded error, for any $\gamma \in \bmX(\bmH,\bmP) \cap \Gamma $ and $g \in \rmU_{\bmP}\cdot \rmM_{\bmP}$,
\begin{equation*}
    \delta_{\bmL,\overline{\rmK},\overline{\gamma^{-1}\bmH\gamma}}(g)
    \approx 
    \min_{i} \inf
    \left\{
     \norm{g^{-1}\Lambda_W} \:\middle\vert\:
     W\leq_{\Q} \Q^{n_i}, \,p^i_{\gamma}(\rmH)\text{-stable subspaces}
    \right\}.
\end{equation*}

Therefore to prove Theorem \ref{thmNondivCriterionSec3} for $\SL_N$, in light of Theorem \ref{thmKleinbockMargulis}, it suffices to prove the following (see \cite[Proposition 2.4]{Zha19} for a special case).  From now on we fix some $i=i_0$ and write $p_{\gamma}=p^{i_0}_{\gamma}$, $n=n_{i_0}$ for simplicity.

\begin{prop}\label{propNonDivCrit}
For every $\eta>0$ there exists $\ep>0$ such that for every $g\in \SL_n(\R)$, $\gamma \in \rmX(\bmH,\bmP)\cap \Gamma$ satisfying
\begin{equation*}
    \norm{g\Lambda_W}\geq \eta
\end{equation*}
for every non-trivial $p_{\gamma}(\bmH)$-stable $\Q$-subspace $W$, 
then we have 
\begin{equation}\label{equaProp4.2}
    \sup_{o\in \calO^{-1}}\norm{gp_{\gamma}(o)\Lambda_W}\geq \ep
\end{equation}
for every non-trivial rational subspace $W$.
\end{prop}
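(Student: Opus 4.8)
The plan is to argue by contradiction. Suppose the statement fails for some $\eta>0$. Then there exist sequences $g_k\in\SL_n(\R)$, $\gamma_k\in\rmX(\bmH,\bmP)\cap\Gamma$, and non-trivial rational subspaces $W_k$ of $\Q^n$ such that $\norm{g_k\Lambda_V}\geq\eta$ for all non-trivial $p_{\gamma_k}(\bmH)$-stable $\Q$-subspaces $V$, while $\sup_{o\in\calO^{-1}}\norm{g_kp_{\gamma_k}(o)\Lambda_{W_k}}\to 0$. The first step is to normalize and extract limits: since $\calO$ is bounded, $\calO^{-1}$ is relatively compact in $\rmH$, so up to a subsequence the elements realizing values close to the supremum stay in a fixed compact set. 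The key subtlety is that $g_k$ need not be bounded; what one should track instead is the normalized vector $w_k := g_k\Lambda_{W_k}/\norm{g_k\Lambda_{W_k}}$ in the relevant exterior power, which lives on a sphere and hence subconverges to some $w_\infty\neq 0$. The hypothesis $\sup_{o}\norm{g_kp_{\gamma_k}(o)\Lambda_{W_k}}\to0$ forces $\norm{g_k\Lambda_{W_k}}\to0$ as well (taking $o$ near the identity, or wherever $h_0\exp(x)$ is close to identity), and more importantly it forces the ``shape'' of $g_k\Lambda_{W_k}$ to degenerate under the $p_{\gamma_k}(\rmH)$-action.

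The heart of the matter is the following rigidity input: $\Lambda_{W_k}$ is a decomposable (pure) wedge, and the condition that a decomposable vector $v$ satisfies $\norm{g p_\gamma(o) v}$ small for \emph{all} $o\in\calO^{-1}$ while $\norm{gv}\geq$ (comparable to) its size should say that the line spanned by $v$ is nearly fixed, in a suitable projective sense, by the whole group $p_\gamma(\rmH)$ — because $\calO^{-1}$ generates $\rmH$ as a group (it is a nonempty open subset of the connected group $\rmH$), and a line that is almost-invariant under a generating set of a group is almost-invariant under the group, with quantitative control since we only need to apply a \emph{bounded} number of elements of $\calO^{-1}$ to generate a neighborhood of the identity and then iterate. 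Concretely, I would use that for $o$ ranging over $\calO^{-1}$ the operators $p_\gamma(o)$ fill out, after boundedly many multiplications, a fixed open subset of $p_\gamma(\rmH)$ containing the identity; combining the bounds $\norm{g p_\gamma(o_1)\cdots p_\gamma(o_m)\Lambda_{W_k}}$ small (which follows by a telescoping/cocycle estimate from the single-step bound, using that $g$ times a bounded group element has comparable norm on the one line we care about) shows the projective point $[g_k\Lambda_{W_k}]$ is moved a small amount by all of $p_{\gamma_k}(\rmH)$. Passing to the limit, $[w_\infty]$ is an honest $p_{\gamma_\infty}(\rmH)$-fixed point in the projectivized exterior power; but here I must be careful that $p_{\gamma_k}$ itself varies — one resolves this by noting (as in the surrounding discussion, via rigidity of the finitely many $\Q$-parabolics up to $\Gamma$-conjugacy, and the fact that $\rmX(\bmH,\bmP)\cap\Gamma$ breaks into finitely many relevant classes modulo the normalizer) that after a further subsequence the relevant representation $p_{\gamma_k}|_{\rmH}$ is essentially constant, or at least its Zariski closure image is, so the limiting almost-invariant subspace is genuinely $p_{\gamma_\infty}(\bmH)$-invariant and defined over $\Q$ up to arbitrarily good approximation.

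Once we have that $[w_\infty]$ is $p_{\gamma_\infty}(\bmH)$-fixed, decomposability of wedges is preserved under limits on the Grassmannian, so $w_\infty=\Lambda_{V_\infty}$ (up to scalar) for an honest subspace $V_\infty\subset\R^n$ that is $p_{\gamma_\infty}(\rmH)$-invariant; a standard argument (the stabilizer of an $\R$-point-limit of rational flags is a $\Q$-group, or: the invariant subspaces of the $\Q$-group $p_{\gamma_\infty}(\bmH)$ form a $\Q$-subvariety of the Grassmannian and $V_\infty$ lies on it, hence can be perturbed to a rational $p_{\gamma_\infty}(\bmH)$-stable $V$) produces a \emph{rational}, non-trivial, $p_{\gamma_k}(\bmH)$-stable subspace $V$ of $\Q^n$ for all large $k$ with $\norm{g_k\Lambda_V}$ small — contradicting the hypothesis $\norm{g_k\Lambda_V}\geq\eta$.

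The main obstacle I anticipate is exactly the uniformity over the varying conjugators $\gamma$: the group $p_\gamma(\bmH)$ and the embedding $p_\gamma$ depend on $\gamma$, so ``almost-invariant implies invariant in the limit'' has to be made robust against this variation. I expect to handle it by (i) reducing to finitely many cases using that there are only finitely many $\Q$-parabolics and finitely many $p_\gamma$ up to the relevant equivalence (this is the role of \cite[Proposition 3.2.8]{Spr98}-type rigidity invoked just before the proposition), and (ii) using that $\calO^{-1}$ is a \emph{fixed} compact generating neighborhood, so the number of factors needed and the implied constants in the telescoping estimate are uniform in $\gamma$. The geometry-of-numbers input (the $(C,\alpha)$-good functions and Theorem \ref{thmKleinbockMargulis}) is not needed inside this proposition — it is used \emph{outside}, to convert Proposition \ref{propNonDivCrit} into Theorem \ref{thmNondivCriterionSec3}; within the proposition the work is purely the linear-algebraic/representation-theoretic rigidity just described.
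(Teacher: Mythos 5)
Your proposal takes a genuinely different route from the paper, and I believe it has a real gap that the paper's machinery is specifically designed to avoid.

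The paper does \emph{not} argue by compactness and contradiction. It runs an explicit induction on $\dim W$: it picks a finite ``test set'' $\Xi\subset O'_\Q$ such that $\rho(\Xi)$-stability of a subspace already forces $p_\gamma(\bmH)$-stability; if $W$ is not $\rho(\Xi)$-stable it builds the chain $W,\ W+\rho(\xi_1)W,\ \dots$ until it becomes stable, uses $(C,\alpha)$-goodness to find a single $o_l\in O'$ where all the lower-dimensional modules $\Lambda'_i$ have norm $\geq\delta_l$ under $g\rho(o_l)$, and then applies the submodularity inequality (Lemma \ref{lemGeometryOfNumber}: $\norm{A}_g\norm{B}_g \geq \norm{A\cap B}_g\norm{A+B}_g$) to bound the product $\prod_i\norm{\rho(\xi_i)\Lambda_W}_{g\rho(o_l)}$ from below by $Z_0^{-n^2}\delta_l^k\eta$, which forces one factor to be large. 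The $(C,\alpha)$-good input and the geometry-of-numbers lemma are therefore used \emph{inside} the proposition, not only to pass from it to Theorem \ref{thmNondivCriterionSec3}; your explicit claim to the contrary is a warning sign. Indeed the paper flags Lemma \ref{lemGeometryOfNumber} as the thing that ``was crucial to our proof'' and that the author does not know how to generalize — a circumvention would be surprising.

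The gap in the contradiction argument is at the ``telescoping/almost-invariance'' step. Your hypothesis bounds $\norm{g_k p_{\gamma_k}(o)\Lambda_{W_k}}$; what would act on the normalized vector $w_k=g_k\Lambda_{W_k}/\norm{g_k\Lambda_{W_k}}$ is the \emph{conjugate} $g_k p_{\gamma_k}(o)g_k^{-1}$, which is unbounded as $g_k\to\infty$. From $\norm{g_k p_{\gamma_k}(o)\Lambda_{W_k}}\leq\ep_k$ and $\norm{g_k\Lambda_{W_k}}=c_k\leq\ep_k$ you only get $\norm{(g_kp_{\gamma_k}(o)g_k^{-1})w_k}\leq\ep_k/c_k$, and there is no reason this ratio stays bounded, let alone that the image converges to $[w_\infty]$. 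So the assertion that $[w_\infty]$ is a $p_{\gamma_\infty}(\rmH)$-fixed point in projective space is unjustified: the unbounded conjugation is precisely the obstruction. Relatedly, even if you had produced an honest $p_{\gamma_\infty}(\rmH)$-invariant $\R$-subspace $V_\infty$ as the limit of $g_kW_k$, the final ``perturb to a rational $p_{\gamma_k}(\bmH)$-stable $V$ with $\norm{g_k\Lambda_V}$ small'' step does not follow — a rational $V$ close to $V_\infty$ would need $g_k^{-1}V$ close to $W_k$, and since $g_k$ is unbounded, proximity of $V$ to $V_\infty$ says nothing about $\norm{g_k\Lambda_V}$. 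What the paper gets from the quantitative submodularity-plus-$(C,\alpha)$-good argument is exactly the uniform control in $g$ that the soft compactness argument lacks.

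Concretely, to repair the approach you would need (i) a bound on the ratio $\ep_k/c_k$, which the hypotheses do not supply, and (ii) a mechanism to transport an $\R$-invariant limiting subspace back to a \emph{rational} one with a controlled $g_k$-norm — both are exactly the two difficulties that the induction, the test set $\Xi$, and Lemma \ref{lemGeometryOfNumber} are there to handle. As written the proposal does not close either.
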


Before the actual proof, let us make some preparations. Recall that $\calO = (h_0\exp(O))^{-1}$ for some nonempty open bounded set $O$ in $\Lie(\rmH)$ and some $h_0\in\rmH$. Without loss of generality, assume that $O$ contains $0$. Let $O_{\Q}:=\{o\in O\,\vert\, \exp(o)\in \bmH(\Q)\}$.

First, choose $O'$ small such that $(\exp{ O'})^n$ is contained in $\exp(O)$. 
As the measure $\mu_{\calO}$ is assumed to be smooth, when identifying it with a measure on $O$, we may and do assume that this measure is the standard Lebesgue measure.

Second, recall that there exist two constants $C,\alpha>0$ such that for every $g\in\SL_n(\R)$, $\gamma \in \rmX(\bmH,\bmP)\cap \Gamma$, $h_0 \in\rmH$ and every $\Q$-linear subspace $W$ of $\Q^n$,
\begin{equation*}
    o\mapsto \norm{gp_{\gamma}(h_0\exp(o))\Lambda_W}_{\sup}
\end{equation*}
defines a $(C,\alpha)$-good function on $O'$.

In the argument below, we will need to switch between the sup-norm and the Euclidean norm. Each time a bounded error will be created. However this will only be done in finitely many times (depending on $n$), so we will be ambiguous about the choice of the norm in the proof.

Third, let $V$ be the direct products of all exterior powers of $\Q^n$. Consider the vector space 
\begin{equation*}
\begin{aligned}
     \scrF:=& \text{ linear span of }\\
     &\left\{
     f : O' \to \C \,\middle\vert\,
     f(o)= \la 
     p_{\gamma}(h_0 \exp(o)) v,l
     \ra,\,
     \exists\, v\in V, \, l\in V^*,\,\gamma\in \bmX(\bmH,\bmP)\cap \Gamma
    \right\}.
\end{aligned}
\end{equation*}
$\scrF$ is finite-dimensional. And the restriction $\scrF \to \scrF\vert_{O'_{\Q}}$ is an isomorphism. As this is finite-dimensional, there exists (and we fix such) a finite set $\Xi \subset O'_{\Q}$ such that $\scrF \to \scrF\vert_{\Xi}$ is an isomorphism. In particular, this means that for any $\gamma \in \bmX(\bmH,\bmP)\cap \Gamma$, 
if a subspace of $\Q^n$ is stable under $p_{\gamma}(h_0\exp(\xi))$ for all $\xi \in \Xi$, then it is stable under $p_{\gamma}(\bmH)$.
Depending on $\Xi$, there exists a positive integer $Z_0$ such that for all $\gamma \in \rmX(\bmH,\bmP)$, 
\begin{equation*}
    p_{\gamma} ( \xi)\cdot  V_{\Z} \subset \frac{1}{Z_0} \cdot V_{\Z}, \quad \forall \xi \in \Xi.
\end{equation*}

Lastly, as we shall use some argument (see Lemma \ref{lemGeometryOfNumber} below) from geometry of numbers, by abuse of notation, let us write $\Lambda_W$ also for the $\Z$-module spanned by $W\cap \Z^n$. 
For any $\Z$-submodule $\Lambda$ of $\R^n$,
it is a fact that $\norm{\Lambda}$ (viewed as in $\wedge^{\rank(\Lambda)}\R^n$ by taking the exterior powers of a basis) is equal to the covolume (for any $\R$-linear subspace, there is a natural volume form induced from the Euclidean metric of the ambient $\R^n$) of $\Lambda$ in the $\R$-linear space spanned by it. 

Let me emphasize that the discussion so far is independent of any particular $g\in \rmG$ or $\gamma\in \rmX(\bmH,\bmP)\cap\Gamma$.
Now fix $\eta>0$ and make a choice of such $(g,\gamma)$ as in Proposition \ref{propNonDivCrit} and start the proof.
Replacing $\eta$ by a smaller one if necessary, we actually assume that
\begin{equation*}
    \norm{g p_{\gamma}(h_0)\Lambda_W}\geq \eta
\end{equation*}
for every non-trivial $p_{\gamma}(\bmH)$-stable $\Q$-subspace $W$. For the sake of notation we simply write $g$ for $g p_{\gamma}(h_0)$ and write $\rho(\xi):= p_{\gamma}( \exp(\xi))$ for $\xi \in \Xi$.

\begin{proof}
The proof proceeds by induction on the dimension of $W$ with the base case of $\dim=0$ being trivial.
So now let us assume that Equation (\ref{equaProp4.2}) has been verified for $W$ of dimension strictly less than $l$ for some $l\geq 1$ with the constant $\ep= \ep_{l-1}>0$ and take a rational subspace $W$ of dimension $l$. We need to choose $\ep_{l}>0$ independent of $g,\gamma, W$ such that Equation (\ref{equaProp4.2}) holds.

If $W$ is $\rho(\Xi)$-stable, then we are done by taking $\ep_{l}:= \min\{\eta,\ep_{l-1}\}$. Otherwise there exists $\xi_1 \in \Xi$ such that $\dim \, (\rho(\xi_1) W \cap W) < l$.

 If $\xi_1 W + W$ is $\rho(\Xi)$-stable, then we stop. Otherwise we can find $\xi_2\in \Xi$ such that $(\rho(\xi_1) W + W)\cap \rho(\xi_2) W$ has dimension $<l$. 
 Continuing this way we find $\xi_1,...,\xi_k$ in $\Xi$ such that $W+\rho(\xi_1) W+...+\rho(\xi_k) W$ is $\rho(\Xi)$-stable for the first time.
 
 For $i=1,...,k$, write $\Lambda_i$ for $\Lambda_{\rho(\xi_i)W}$ and write $\Lambda_0$ for $\Lambda_W$. For $g'\in \SL_n(\R)$ and $v\in V$, let $\norm{v}_g:=\norm{gv}$.
 Then 
 \begin{equation*}
     \rho(\xi_i)\Lambda_W \subset \frac{1}{Z_0} \cdot \Lambda_{\rho(\xi_i)W}
     \implies
      \rho(\xi_i)(Z_0\cdot \Lambda_W) \subset \Lambda_i=\Lambda_{\rho(\xi_i)W},
 \end{equation*}
 hence
 \begin{equation*}
     \norm{\rho(\xi_i) \cdot \Lambda_W }_{g'} 
     = Z_0^{-\dim W} \norm{\rho(\xi_i)(Z_0\cdot \Lambda_W)}
     \geq {Z_0^{-n}} 
     \norm{\Lambda_i}_{g'}
 \end{equation*}
 for all $g'\in \SL_n(\R)$.
 For a $\Z$-submodule $\Lambda$ of $\Z^n$ that may not be \textit{primitive} (i.e. intersection of its $\Q$-linear span with $\Z^n$ is equal to itself), let $\overline{\Lambda}$ be the unique primitive one that contains it. 
 Abbreviate $\overline{(\Lambda_0+...+\Lambda_i)}\cap \Lambda_{i+1}$ as $\Lambda'_{i+1}$. 
 By construction $\Lambda'_i$'s are all primitive and of rank strictly smaller than $l$.
 
 By $(C,\alpha)$-goodness, we find $\delta_l>0$ (depending only on $\ep_{l-1}$, $(C,\alpha)$ and $O'$) such that for all rational subspace $W'$ of dimension smaller than $l$,
 \begin{equation*}
     \Leb
     \left\{
      o\in O' \,\middle\vert\,
      \norm{g \rho(o)\Lambda_{W'}} \geq \delta_l
     \right\} 
     \geq \Leb(O') \cdot (1-\frac{1}{2n}).
 \end{equation*}
 Therefore we find $o_{l}\in O'$ such that 
 \begin{equation*}
     \norm{g\rho(o_l)\Lambda_i'} \geq \delta_l
 \end{equation*}
 for all $i$. 
 Finally, write $\rho(\xi_0)=id$ formally, then
 \begin{equation*}
     \begin{aligned}
          & \prod_{i=0,...,k} \norm{\rho(\xi_i)\Lambda_W}_{g\rho(o_l)}\\
          \geq & Z_0^{-n^2}
          \prod_{i=0,...,k} \norm{\Lambda_i}_{g\rho(o_l)}\\
          \geq & Z_0^{-n^2}
          \norm{\Lambda'_1}_{g\rho(o_l)} \cdot \norm{\Lambda_0+ \Lambda_1}_{g\rho(o_l)} \cdot  \prod_{i=2,...,k} \norm{\Lambda_i}_{g\rho(o_l)}\\
          \geq & Z_0^{-n^2}
           \norm{\Lambda'_1}_{g\rho(o_l)} \cdot 
           \norm{\overline{(\Lambda_0+ \Lambda_1)}\cap \Lambda_2}_{g\rho(o_l)} \cdot 
           \norm{\overline{\Lambda_0+\Lambda_1}+\Lambda_2}_{g\rho(o_l)} \cdot \prod_{i=3,...,k} \norm{\Lambda_i}_{g\rho(o_l)}\\
           \geq & Z_0^{-n^2}
          \prod_{i=1,...,k} \norm{\Lambda'_i}_{g\rho(o_l)} \cdot \norm{\overline{\Lambda_1+...+\Lambda_k}}_{g\rho(o_l)}\\
          =& Z_0^{-n^2}
          \prod_{i=1,...,k} 
          \norm{g\rho(o_l)\Lambda'_i} \cdot \norm{g\rho(o_l)\overline{\Lambda_1+...+\Lambda_k}}\\
          \geq & Z_0^{-n^2} \delta_l^k \eta 
     \end{aligned}
 \end{equation*}
 where we have repeatedly used Lemma \ref{lemGeometryOfNumber} below.
 So one of $\norm{g\rho(o_l)\rho(\xi_i)\Lambda_W}$ is at least ${(Z_0^{-n^2} \delta_l^k \eta )}^{1/(k+1)}$. And we are done with taking
 \begin{equation*}
     \ep_l := \min_{k} {(Z_0^{-n^2} \delta_l^k \eta )}^{1/(k+1)}.
 \end{equation*}
\end{proof}

The following well-known fact was crucial to our proof above. It is not clear to us whether it can be generalized to an arbitrary semisimple group.
\begin{lem}\label{lemGeometryOfNumber}
For any $g\in \SL_n(\R)$ and two primitive $\Z$-submodules $A$ and $B$ of $\Z^n$, we have
\begin{equation*}
    \norm{A}_g\norm{B}_g \geq \norm{A\cap B}_g \norm{A+B}_g.
\end{equation*}
\end{lem}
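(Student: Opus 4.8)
The plan is to reduce the inequality $\norm{A}_g\norm{B}_g \geq \norm{A\cap B}_g \norm{A+B}_g$ to a purely metric statement about covolumes of sublattices after applying $g$. Since $g \in \SL_n(\R)$ acts as a volume-preserving linear map on $\R^n$, it sends each primitive $\Z$-submodule to a lattice in its image subspace; writing $A' = gA$, $B' = gB$ viewed as lattices in the real subspaces $V_A = \R\text{-span}(A')$ and $V_B = \R\text{-span}(B')$, the quantities $\norm{A}_g$, $\norm{B}_g$ become the covolumes $\covol(A')$, $\covol(B')$ in $V_A$, $V_B$ with respect to the induced Euclidean structures. The key observation is that $g(A\cap B) = gA \cap gB$ and $g(A+B) = gA + gB$ \emph{as subsets of $\R^n$}, but one must be slightly careful: $A+B$ as a $\Z$-module need not be primitive, yet $\norm{\cdot}$ was defined so that $\norm{\Lambda}$ equals the covolume of $\Lambda$ in its real span regardless of primitivity, so the formula is insensitive to this. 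Thus it suffices to prove, for two rational subspaces $U, W \subseteq \R^n$ and the standard-lattice sublattices they cut out (transported by $g$), the covolume inequality $\covol(U\text{-lattice})\cdot\covol(W\text{-lattice}) \geq \covol((U\cap W)\text{-lattice})\cdot\covol((U+W)\text{-lattice})$.

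Next I would prove this covolume inequality directly. Set $A' = gA$, $B' = gB$; these are lattices of full rank in $V_A$, $V_B$ respectively, and $C' := A'\cap B'$ is a lattice in $V_A\cap V_B$ while $D' := A'+B'$ is a lattice in $V_A+V_B$. There is a short exact sequence of abelian groups $0 \to C' \to A'\oplus B' \to D' \to 0$ (the first map the antidiagonal, the second the sum), hence $D' \cong (A'\oplus B')/C'$, which gives the index relation for the \emph{abstract} lattices. The subtlety is that covolume depends on the Euclidean structure on the relevant subspace, and the orthogonal decompositions of $V_A$, $V_B$ relative to $V_A\cap V_B$ need not be compatible. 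The clean way is to use the formula: choose an orthonormal-adapted basis and compute Gram determinants, or invoke the general fact that for any two sublattices $L_1, L_2$ of a fixed lattice $L$ in Euclidean space one has $\covol(L_1)\covol(L_2) \geq \covol(L_1\cap L_2)\covol(L_1+L_2)$ with equality iff $V_{L_1} + V_{L_2}$ is an orthogonal direct sum relative to $V_{L_1}\cap V_{L_2}$. This inequality follows by projecting: let $\pi$ be orthogonal projection onto the orthogonal complement of $V_A \cap V_B$ inside $V_A + V_B$; then $\covol(D') = \covol(C')\cdot \covol(\pi(D'))$ where $\pi(D')$ is a lattice in $\pi(V_A+V_B)$, and $\covol(\pi(D')) \leq \covol(\pi(A'))\cdot\covol(\pi(B'))$ because $\pi(A')$ and $\pi(B')$ together span $\pi(D')$'s real space, while $\covol(A') = \covol(C_A)\covol(\pi(A'))$ with $C_A = A'\cap V_A\cap V_B \supseteq C'$, so $\covol(C_A)\leq \covol(C')$; combining these three relations yields the claim.

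Actually, the cleanest route, and the one I would ultimately write up, is to reduce the whole thing to the rank-one reduction case or to cite the standard submodularity of the lattice covolume function: the assignment $\Lambda \mapsto \log\covol(\Lambda)$ on the lattice of rational subspaces (equipped with sublattices transported by $g$) is submodular, which is precisely the asserted inequality. This is a well-known fact in the geometry of numbers (it underlies, e.g., the theory of canonical filtrations of Euclidean lattices, or Grayson's work on reduction theory, or the semistability formalism of Stuhler); the essential content is the projection argument above: split $V_B = (V_A\cap V_B) \oplus V_B'$ orthogonally, project $A'$ onto $V_B'$ along $V_A\cap V_B$, and use that this projection can only decrease covolume while the complementary factor $A'\cap(V_A\cap V_B)$ has covolume at most that of $A'\cap B'$ since the latter is a sublattice of the former. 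The main obstacle is bookkeeping the Euclidean structures correctly — one genuinely uses that $g \in \SL_n(\R)$ so that $g$ preserves $n$-dimensional volume, but on the lower-dimensional subspaces $g$ distorts the metric, which is exactly why the inequality is stated with $\norm{\cdot}_g$ on all four terms simultaneously and why no finer (equality) statement holds in general.

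\begin{proof}[Proof sketch]
Since $g\in\SL_n(\R)$, it sends the primitive $\Z$-modules $A$, $B$ to full-rank lattices $A'=gA$, $B'=gB$ in the real subspaces $V_A$, $V_B$ they span, and one has $gA\cap gB = g(A\cap B)$ and $gA+gB = g(A+B)$. Because $\norm{\Lambda}$ equals the covolume of $\Lambda$ in $\R\text{-span}(\Lambda)$ regardless of primitivity, it suffices to prove the submodularity inequality
\begin{equation*}
    \covol(A')\,\covol(B') \geq \covol(A'\cap B')\,\covol(A'+B')
\end{equation*}
for lattices $A'$, $B'$ in Euclidean space. Let $Z = V_A\cap V_B$, let $P = V_A + V_B$, and let $\pi\colon P \to Z^{\perp}\cap P$ be orthogonal projection. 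Then $\covol(A'+B') = \covol((A'+B')\cap Z)\cdot\covol(\pi(A'+B'))$, and since $\pi(A')$, $\pi(B')$ span $\pi(P)$ we get $\covol(\pi(A'+B')) \leq \covol(\pi(A'))\cdot\covol(\pi(B'))$. Moreover $(A'+B')\cap Z \supseteq A'\cap B' \supseteq$ nothing smaller is needed: in fact $A'\cap B' \subseteq A'\cap Z$, so $\covol(A'\cap Z)\le\covol(A'\cap B')$, and similarly using $\covol(A') = \covol(A'\cap Z)\cdot\covol(\pi(A'))$ and the analogue for $B'$. Combining,
\begin{equation*}
    \covol(A')\covol(B') = \covol(A'\cap Z)\covol(\pi(A'))\covol(B'\cap Z)\covol(\pi(B')) \geq \covol(A'\cap B')\covol(A'+B'),
\end{equation*}
where the last step also uses $\covol((A'+B')\cap Z) \le \covol(A'\cap Z)\covol(B'\cap Z)/\covol(A'\cap B')$, which holds because $(A'+B')\cap Z$, $A'\cap Z$, $B'\cap Z$, $A'\cap B'$ all lie in $Z$ and satisfy the index relation coming from $0\to A'\cap B'\to (A'\cap Z)\oplus(B'\cap Z)\to (A'+B')\cap Z\to 0$ together with the analogous projection argument inside $Z$ (or by downward induction on $\dim$). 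Transporting back by $g^{-1}$ gives the stated inequality.
\end{proof}
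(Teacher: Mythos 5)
The paper gives no proof of this lemma; it is quoted as a ``well-known fact'' from the geometry of numbers without reference, so there is no author argument to compare against, and your proposal supplies the missing content. Your strategy is the standard and correct one: identify $\norm{\cdot}_g$ with the covolume of the $g$-transported lattice inside its real span, reduce the claim to submodularity of covolume, and prove submodularity by orthogonal projection $\pi$ away from $Z=V_A\cap V_B$ together with the Fischer (block-Hadamard) inequality for Gram determinants.

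A few corrections and simplifications are in order. The short exact sequence you invoke should have $(A'\cap Z)+(B'\cap Z)$, not $(A'+B')\cap Z$, as its cokernel; the containment $(A'\cap Z)+(B'\cap Z)\subseteq(A'+B')\cap Z$ is the direction that still makes your chain of inequalities close up, but the sequence as written is not exact. In fact primitivity of $A$ and $B$ turns the entire $Z$-part of your ledger into exact equalities, so no induction on dimension and no secondary projection argument inside $Z$ are needed: since $A=W_A\cap\Z^n$ one has $A\cap W_B=W_A\cap W_B\cap\Z^n=A\cap B$, and likewise $(A+B)\cap W_A\cap W_B\subseteq\Z^n\cap W_A\cap W_B=A\cap B$; after applying $g$ this gives $A'\cap Z=B'\cap Z=(A'+B')\cap Z=A'\cap B'$. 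The one genuine inequality in the whole proof is therefore the Fischer step, namely that the covolume of $\pi(A')\oplus\pi(B')$ is at most the product of the covolumes of $\pi(A')$ and $\pi(B')$. For that step you should also verify that $\pi(V_A)\cap\pi(V_B)=\{0\}$, so that $\pi(A')+\pi(B')$ is an internal direct sum of lattices and in particular discrete; this follows at once, since $\pi(a)=\pi(b)$ with $a\in V_A$, $b\in V_B$ forces $a-b\in Z\subseteq V_B$, hence $a\in V_A\cap V_B=Z$ and $\pi(a)=0$.
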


\section{The general case}\label{secGeneralcase}

\subsection{Proof of the Main theorem}

Let us fix an embedding $\iota: \bmG \to \bmG'$ and a maximal compact subgroup $\rmK$ of $\rmG$. By \cite{Mos552}, then there exists a maximal compact subgroup $\rmK'$ of $\rmG'$ such that $\iota_{\rmK'}\vert_{\rmG}=\iota_{\rmK}$.
Therefore $\iota$ induces an embedding $\rmX:=\rmG/\rmK \hookrightarrow \rmX' := \rmG'/\rmK'$.  
For an element $g\in\rmG$, write $[g]$ for its image in the quotient $\rmX$.

We also assume that $\Gamma$ and $\Gamma'$ are arithmetic lattices in  $\rmG_{\Q}$ and $\rmG'_{\Q}$ respectively such that $\iota(\Gamma)\subset \Gamma'$.

 The letter $\bmP$ stands for parabolic subgroups of $\bmG$ and $\bmQ$ stands for those of $\bmG'$.
And for a cocharacter $a_t: \bmG_m \to \bmG$, define 
\begin{equation*}
    \begin{aligned}
         \bmP_{a_t}:&= \left\{
          g \in \bmG \,\Big\vert\,
          \lim_{t\to \infty} a_t g a_t^{-1} \text{ exists}
         \right\},\\
         \bmQ_{a_t}:&= \left\{
          g \in \bmG' \,\Big\vert\,
          \lim_{t\to \infty} a_t g a_t^{-1} \text{ exists}
         \right\}.
    \end{aligned}
\end{equation*}

The main goal of this section is to prove
\begin{thm}\label{thmReduceTHM1toSLN}
If Theorem \ref{thmMainThmSec3} is true for $\bmG'$ then Theorem \ref{thmMainThmSec3} is true for $\bmG$.
\end{thm}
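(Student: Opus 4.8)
The plan is to transfer the conclusion of Theorem \ref{thmMainThmSec3} from $\bmG'$ to $\bmG$ by carefully comparing horospherical coordinates on the two symmetric spaces $\rmX \hookrightarrow \rmX'$. Starting from the data $(\bmH, \Gamma, (g_n))$ in $\bmG$, I would view $(g_n)$ as a sequence in $\rmG'$ and $\bmH$ as a $\Q$-subgroup of $\bmG'$ (still with no non-trivial $\Q$-characters, since $\iota$ is a closed embedding of semisimple groups). Applying Theorem \ref{thmMainThmSec3} for $\bmG'$, after passing to a subsequence I obtain a $\Q$-parabolic $\bmQ \leq \bmG'$ and $(\lambda_n') \in \Gamma'$ with $\lambda_n'^{-1} \iota(\bmH) \lambda_n' \subset \bmQ$, such that writing $\lambda_n'^{-1} g_n = u_n' a_n' m_n' k_n'$ in $(\bmQ, \rmK')$-horospherical coordinates, $\alpha(a_n') \to \infty$ for all $\alpha \in \Delta(\bmA_{\bmQ,\rmK'}, \bmQ)$ and $(\lambda_n'^{-1} \mu_{\calO}' \lambda_n' u_n' m_n')$ is non-divergent in $\Gamma' \cap \rmQ \bs \rmQ$. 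The first task is to arrange, after a further subsequence and replacing $\lambda_n'$ by $\lambda_n' \gamma$ for suitable $\gamma$, that $\lambda_n' \in \iota(\rmG_\Q) \cdot (\text{bounded set})$ — or better, by reduction theory (Theorem \ref{thmReducThy}) applied in $\bmG$ together with the finiteness of double cosets, to reduce to the case $\lambda_n' = \iota(\lambda_n)$ for $\lambda_n \in \Gamma$. This uses that $\bmH$ and its $\Gamma'$-conjugates interact with $\bmG$-parabolics.

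The second and central task is the comparison of parabolics: I expect $\bmQ$ to be controlled by a one-parameter subgroup, i.e. $\bmQ = \bmQ_{a_t}$ for a cocharacter $a_t \colon \bmG_m \to \bmG'$ realizing the divergence of $(a_n')$, and then I want to produce a $\Q$-parabolic $\bmP$ of $\bmG$ with $\bmP = \bmP_{a_t'}$ for a cocharacter $a_t'$ of $\bmG$ that is ``aligned'' with $a_t$ under $\iota$. This is where geometric invariant theory enters: the vector $v_{\rmQ} \in \wedge^{\dim \bmQ} \Lie(\bmG')$ and its stabilizer data, restricted along $\Lie(\bmG) \hookrightarrow \Lie(\bmG')$, should let me extract a destabilizing cocharacter for $\bmG$ via the Hilbert--Mumford / Kempf optimality formalism, and the parabolic $\bmP$ it defines will contain $\lambda_n^{-1} \bmH \lambda_n$ because $\iota(\lambda_n^{-1}\bmH\lambda_n) \subset \bmQ$ forces $\lambda_n^{-1}\bmH\lambda_n$ into $\iota^{-1}(\bmQ) \cap \bmG$, a parabolic (or at least a subgroup of a parabolic) of $\bmG$. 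I would then verify, using the explicit formula for the $\rmG'_\Q$-action on horospherical coordinates and the relations between horospherical coordinates of nested parabolics from Sections \ref{secRelationHoro1}--\ref{secRelationHoro1}, that the $\bmG$-horospherical coordinates $\lambda_n^{-1} g_n = u_n a_n m_n k_n$ with respect to $(\bmP, \rmK)$ satisfy $\alpha(a_n) \to \infty$ for all $\alpha \in \Delta(\bmA_{\bmP,\rmK}, \bmP)$ — essentially because the $\rmA_{\bmP,\rmK}$-part of $g_n$ is the $\bmG$-shadow of the $\rmA_{\bmQ,\rmK'}$-part, and the relevant roots pair positively.

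Finally, for the non-divergence clause, I would show that non-divergence of $(\lambda_n^{-1}\mu_\calO\lambda_n u_n m_n)$ in $\Gamma \cap \rmP \bs \rmP$ follows from non-divergence of its image in $\Gamma' \cap \rmQ \bs \rmQ$ via the map $\rmP \to \rmQ$ induced by $\iota$ (restricted appropriately through the Levi quotients), using that a properly embedded homogeneous subspace pulls compactness back — more precisely, that $\Gamma \cap \rmP \bs \rmP \to \Gamma' \cap \rmQ \bs \rmQ$ is proper on the relevant pieces, which again reduces via reduction theory to a statement about $\SL_N$-type groups and Siegel sets. The main obstacle I anticipate is precisely the lack of functoriality of Borel--Serre compactifications flagged in Section \ref{secOutline}: there is no a priori map between ${}_\Q\overline{\rmX}^{\BS}$ and ${}_\Q\overline{\rmX'}^{\BS}$ compatible with strata, so the correspondence $\bmQ \leftrightarrow \bmP$ must be built by hand through GIT and reduction theory, and checking that the divergence rates (all simple roots tending to infinity) survive the passage — rather than merely some roots — is the delicate point, since a priori $\iota^{-1}(\bmQ) \cap \bmG$ could be a non-maximal parabolic or the induced torus could degenerate in the wrong directions.
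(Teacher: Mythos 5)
Your proposal follows the same overall contour as the paper's proof: apply the $\bmG'$-version, reduce the $\Gamma'$-sequence to a $\Gamma$-sequence, use GIT/Hilbert--Mumford to produce a cocharacter of $\bmG$ aligning the $\bmG'$-parabolic $\bmQ$ with a $\bmG$-parabolic $\bmP$, and then transfer the divergence and non-divergence clauses by comparing horospherical coordinates and Siegel sets. This is the right skeleton, and your flagging of the lack of Borel--Serre functoriality as the central obstacle is exactly the point. However, there are two genuine gaps.

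First, you underestimate where GIT is needed. You invoke Hilbert--Mumford only for the ``second task'' (aligning $\bmQ$ with some $\bmP_{a_t}$), and for the ``first task'' (replacing $\lambda_n'$ by $c\gamma_n$ with $\gamma_n\in\Gamma$) you appeal only to ``reduction theory together with the finiteness of double cosets.'' That is not enough. The finiteness statement in reduction theory (Theorem \ref{thmReducThy}(2)) compares two Siegel sets in the \emph{same} group for the \emph{same} minimal $\Q$-parabolic; to even bring $\lambda_n'$ and a candidate $\gamma_n\in\Gamma$ into comparable Siegel sets, the paper first shows that $\wedge\Lie(\bmU_{\bmQ})$ becomes $\bmG$-unstable after translation, applies Hilbert--Mumford to get a $\Q$-cocharacter $b_t$ of $\bmG$ with $\bmQ_{b_t}\subset\lambda_{n_0}^{-1}\bmQ\lambda_{n_0}$, perturbs it to a generic $a_t$ so that $\bmP_{a_t}$ is a minimal $\Q$-parabolic of $\bmG$, and only then uses Lemma \ref{lemGoingBtw} together with Lemma \ref{lemGoingDown} to place both $(\gamma_n g_n)$ and $(\lambda_{n_0}^{-1}\lambda_n g_n)$ into a common Siegel set associated with $(\bmQ_{a_t},\rmK')$. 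Without the GIT step you cannot produce the $\bmG$-cocharacter that makes the two families of Siegel sets compatible, and the double-coset finiteness has nothing to grip.

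Second, Theorem \ref{thmMainThmSec3} (the statement actually being transferred) does \emph{not} assume that $\bmH$ has no non-trivial $\Q$-characters; your proposal tacitly specializes to that case via the parenthetical about $\iota(\bmH)$. In the general case, once one writes $g_n'' = \gamma_n^{-1}o_n g_n$ in $(\bmP,\rmK)$-coordinates, the factor $a_{\bmP}(o_n^{\gamma_n})$ need not vanish, and one must show it stays bounded before one can conclude that $\alpha(a_n')\to\infty$ and that $\gamma_n^{-1}o_n\gamma_n u_n'm_n'$ is non-divergent. The paper handles this by observing that $\overline{c}_{\gamma_n^{-1}}\colon\bmH/{}^{\circ}\bmH\to\bmP/{}^{\circ}\bmP$ is a morphism of tori parametrized by a point of the affine variety $\bmX(\bmH,\bmP)$, so by rigidity of diagonalizable groups it stabilizes along a subsequence, forcing $a_{\bmP}(o_n^{\gamma_n})$ to be bounded. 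Your sketch of the non-divergence transfer (``properness of $\Gamma\cap\rmP\bs\rmP\to\Gamma'\cap\rmQ\bs\rmQ$'') does not address this and would leave a hole precisely when $\bmH$ has characters.
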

As we have proved Theorem \ref{thmMainThmSec3} for $\SL_N$, this concludes the proof.

Borel--Serre compactifications do not seem to have functorialities. Nevertheless the partial compactifications of $(\bmG,\rmK)$ and $(\bmG',\rmK')$ are somewhat related.

\begin{prop}\label{propLimGammaCell}
Let $\bmQ$ be a $\Q$-parabolic subgroup of $\bmG'$.
If there exists a sequence $(g_n)$ in $\rmG$, $(\lambda_n)$ in $\Gamma'$  such that $([\lambda_n g_n ])$ converges to some point in $e(\bmQ)$ in ${}_{\Q}\overline{\rmX'}^{\BS}$. Then after passing to a subsequence, there exists $(\gamma_n)$ in $\Gamma$ and $c \in \Gamma'$ such that $\lambda_n=c\gamma_n $ for every $n$.
\end{prop}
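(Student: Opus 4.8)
The plan is to exploit the fact that convergence of $([\lambda_n g_n])$ into a boundary stratum $e(\bmQ)$ of ${}_{\Q}\overline{\rmX'}^{\BS}$ forces the $\rmX'$-points $[\lambda_n g_n]$ to escape every compact set of $\Gamma' \bs \rmX'$ along a \emph{geodesic-like} trajectory governed by $\bmQ$, and then to pull this back to $\rmG$ using reduction theory for $\bmG$ (Theorem \ref{thmReducThy}). First I would fix a minimal $\Q$-parabolic $\bmP_0 \subset \bmG$ with Langlands data relative to $\rmK$, and a finite set $F \subset \rmG_{\Q}$ and Siegel set $\frakS = \frakS_{\omega,t}$ with $\rmG = \Gamma \cdot F \cdot \frakS$. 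For each $n$, write $g_n = \delta_n f_n s_n$ with $\delta_n \in \Gamma$, $f_n \in F$, $s_n \in \frakS$; passing to a subsequence we may assume $f_n \equiv f$ is constant, so $[\lambda_n g_n] = [\lambda_n \delta_n f s_n]$ with $\lambda_n \delta_n f \in \Gamma' \cdot \iota(\Gamma) \cdot \rmG_{\Q} \subset \rmG'_{\Q}$ (here I use $\iota(\Gamma) \subset \Gamma'$ and $\iota(\rmG_\Q) \subset \rmG'_\Q$). Thus after translating by the fixed finite ambiguity $f$, it suffices to analyze $[\mu_n s_n]$ for $\mu_n := \lambda_n \delta_n f \in \rmG'_\Q$ and $s_n \in \frakS$, a Siegel set in $\rmG$.

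Next I would write the horospherical coordinates of $s_n$ with respect to $(\bmP_0, \rmK)$ as $s_n = u_n a_n m_n k_n$ with $u_n \in \omega_\rmU$, $m_n k_n \in \omega_\rmM \rmK$ bounded, and $\alpha(a_n) > t$ for all $\alpha \in \Delta(\bmA_{\bmP_0,\rmK},\bmP_0)$; after a further subsequence $u_n \to u_\infty$, $m_n k_n \to m_\infty k_\infty$, and for each simple root $\alpha$ either $\alpha(a_n)$ stays bounded or $\alpha(a_n) \to \infty$. Let $I$ be the set of roots that stay bounded and $\bmP := (\bmP_0)_{\Delta \setminus I}$ the corresponding $\Q$-parabolic of $\bmG$ containing $\bmP_0$; then $[s_n] \to \xi \in e(\bmP) \subset {}_{\Q}\overline{\rmX}^{\BS}$ by the definition of the Borel--Serre topology. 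The point is that the image of $[s_n]$ in ${}_\Q\overline{\rmX'}^{\BS}$ (under the inclusion $\rmX \hookrightarrow \rmX'$, whose closure behaviour I must track via the compatibility $\iota_{\rmK'}|_{\rmG} = \iota_\rmK$ from \cite{Mos552}) converges to a boundary point whose stratum is cut out by a $\Q$-parabolic $\bmQ_0 \subset \bmG'$ determined by how the one-parameter degeneration $a_n$ embeds via $\iota$; concretely $\bmQ_0 = \bmQ_{a_t}$ for the appropriate limiting cocharacter direction $a_t$ of $(a_n)$ in $\bmA_{\bmP_0}$.

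The hypothesis says $[\lambda_n g_n] = [\mu_n s_n]$ converges to a point of $e(\bmQ)$. Since the $\Gamma'$-action permutes the strata $e(\bmQ')$ via conjugation of parabolics, and since $[\mu_n s_n] = [\mu_n u_\infty m_\infty k_\infty \cdot (\text{bounded})\cdot a_n \cdot (\ldots)]$ degenerates along $\mu_n \bmQ_0 \mu_n^{-1}$ (up to bounded corrections), convergence into the \emph{single fixed} stratum $e(\bmQ)$ forces $\mu_n \bmQ_0 \mu_n^{-1}$ to be $\Gamma'$-conjugate to $\bmQ$ for all $n$ --- and more precisely, by the finiteness in Theorem \ref{thmReducThy}(2) applied to $\bmG'$ together with properness of the $\Gamma'$-action on ${}_\Q\overline{\rmX'}^{\BS}$ (\cite[III.9.15,17]{BorJi06}), after a subsequence we may choose $c_n \in \Gamma'$ with $c_n^{-1}\mu_n$ lying in a \emph{fixed} Siegel-type set for $\bmG'$ attached to $\bmQ$; comparing two such representatives shows $c_n^{-1} c_m$ ranges over a finite subset of $\Gamma'$, so after one more subsequence $c_n \equiv c$ is constant. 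Then $\gamma_n := c^{-1}\lambda_n$ satisfies $\lambda_n = c\gamma_n$, and it remains to check $\gamma_n \in \Gamma$: this follows because $\gamma_n = c^{-1}\mu_n (\delta_n f)^{-1}$ has $c^{-1}\mu_n$ bounded in $\rmG'$ along a set meeting $\Gamma'$ in a discrete (hence, after subsequence, eventually constant up to $\Gamma$) set, and unwinding $\mu_n = \lambda_n \delta_n f$ with $\delta_n \in \Gamma$, $f$ fixed, forces $c^{-1}\lambda_n \in \Gamma' \cap (\text{bounded set})\cdot \Gamma = \Gamma$ for $n$ large.

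The main obstacle I anticipate is the third step: making precise the claim that the boundary stratum reached by $[\mu_n s_n]$ in ${}_\Q\overline{\rmX'}^{\BS}$ is $\Gamma'$-conjugate to $\mu_n\bmQ_{a_t}\mu_n^{-1}$ \emph{despite the absence of functoriality} of Borel--Serre compactifications --- one must track how a Siegel set of $\bmG$, sitting inside $\rmG'$ via $\iota$, is contained in (a bounded neighbourhood of) a Siegel set of $\bmG'$ attached to $\iota(\bmP_0)$'s parabolic closure $\bmQ_{a_t}$, which requires the compatibility of Cartan involutions and a careful comparison of the two sets of simple roots; this is exactly the reduction-theory-plus-GIT bookkeeping flagged in the introduction, and it is where the real content lies.
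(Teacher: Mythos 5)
Your overall skeleton—write $g_n$ in a $\bmG$-Siegel set via reduction theory, compare with the convergence in $\overline{\rmX'}^{\BS}$, and use finiteness of Siegel-set intersections to pin down a constant $c$—is the right kind of argument, and the final extraction of $c$ and $\gamma_n$ is sound modulo the earlier steps. But you have correctly identified where the real content lies (your ``third step''), and that step is a genuine gap, not merely bookkeeping.

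The problem is one of direction. You begin on the $\bmG$-side: put $s_n$ into horospherical coordinates for a minimal $\Q$-parabolic $\bmP_0$ of $\bmG$, extract a limiting direction, and then try to \emph{push this forward} to identify the $\bmG'$-stratum reached by $[\mu_n s_n]$. That push-forward is exactly the ``functoriality of Borel--Serre'' that the paper repeatedly stresses does not exist: there is no a priori reason that the $\bmG'$-parabolic $\bmQ_{a_t}$ built from your limiting $\Q$-cocharacter $a_t$ of $\bmA_{\bmP_0}$ is $\Gamma'$-conjugate to $\bmQ$, nor that $\mu_n$ sends one Siegel set into (a bounded neighbourhood of) another in $\bmG'$. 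Your sentence ``$[\mu_n s_n]$ degenerates along $\mu_n\bmQ_0\mu_n^{-1}$ (up to bounded corrections)'' presupposes the comparison you need to prove, since left multiplication by $\mu_n$ changes which parabolic the horospherical coordinates are taken with respect to, by an amount you have no control over. (Incidentally, with the paper's indexing convention the stratum you reach is $e\bigl((\bmP_0)_I\bigr)$ for $I$ the \emph{bounded} roots, not $(\bmP_0)_{\Delta\setminus I}$; this is cosmetic but worth fixing.)

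The paper closes exactly this gap by running the argument in the \emph{opposite} direction, using geometric invariant theory. From $[\lambda_n g_n]\to e(\bmQ)$ one gets $g_n^{-1}\lambda_n^{-1}\bigl(\oplus_i v_i\bigr)\to 0$ for an integral basis $v_1,\dots,v_k$ of $\Lie(\bmU_{\bmQ})$; Corollary~\ref{coroUnstaCritEpsilon} then shows $\lambda_{n_0}^{-1}\oplus_i v_i$ is $\bmG$-unstable for some $n_0$, and the Hilbert--Mumford criterion (Theorem~\ref{thmHilbertMumford}) produces a genuine $\Q$-cocharacter $b_t$ \emph{of} $\bmG$ with $\bmQ_{b_t}\subset\lambda_{n_0}^{-1}\bmQ\lambda_{n_0}$. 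Only then does one pass to a nearby generic $a_t$ with $\bmP_{a_t}$ minimal, and it is Lemma~\ref{lemGoingBtw} (compatibility of horospherical coordinates for $\bmP_{a_t}\subset\bmG$ and $\bmQ_{a_t}\subset\bmG'$ for a $\Q$-cocharacter $a_t$ of $\bmG$) that provides the bridge between Siegel sets of $\bmG$ and of $\bmG'$, with Lemmas~\ref{lemGoingDown} and~\ref{lemGoingUp} doing the remaining reshuffling. So the GIT input is not optional: it is the mechanism that manufactures a $\bmG$-rational degeneration direction compatible with the given $\bmG'$-stratum, which is precisely what your proposal assumes but does not produce. Without it, the claim that $c_n^{-1}\mu_n$ lands in a fixed Siegel-type set for $\bmG'$ attached to $\bmQ$ is unjustified, and the properness/finiteness arguments downstream have nothing to bite on.
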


\begin{prop}\label{propLimCell}
Let $\bmQ$ be a $\Q$-parabolic subgroup of ${\bmG'}$. 
Assume that $(g_n)$ is a sequence  in $\rmG$ such that $([g_n])$ converges to an element of $e(\bmQ)$ in ${}_{\Q}\overline{\rmX'}^{\BS}$, then there exists a $\Q$-cocharacter $\bma_t:\bmG_m \to \bmG$ such that
\begin{enumerate}
    \item  $\bmQ=\bmQ_{\bma_t}$;
    \item $([g_n])$ converges to an element of $e(\bmP)$ 
    in ${}_{\Q}\overline{\rmX}^{\BS}$ with $\bmP:=\bmP_{\bma_t}$.
\end{enumerate}
\end{prop}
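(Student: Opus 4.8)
The plan is to extract the cocharacter from the convergence data in the larger space $\rmX'$ and then show it can be chosen over $\Q$ and inside $\bmG$, after which condition (2) will follow almost formally from the description of convergence in the Borel--Serre topology. Write the horospherical coordinates of $g_n$ with respect to $(\bmQ,\rmK')$ as $g_n = u_n a_n m_n k_n$. The hypothesis that $([g_n])$ converges to a point of $e(\bmQ)$ means precisely that $\alpha(a_n)\to\infty$ for every $\alpha\in\Delta(\bmA_{\bmQ,\rmK'},\bmQ)$ while $(u_n)$ and $([m_n k_n])$ converge. The key point is that $a_n\in\rmA_{\bmQ,\rmK'}=i_{\rmK'}(\bmS_{\bmQ})(\R)^{\circ}$, and as $n\to\infty$ the normalized direction of $\log a_n$ in the (finite-dimensional, rational) cocharacter lattice of $\bmS_{\bmQ}$ lies in a fixed rational cone; after passing to a subsequence one can replace the sequence $a_n$ by a one-parameter subgroup $\bma_t$ ranging in $\bmA_{\bmQ,\rmK'}$ with $\bma_t$ defined over $\Q$ and $\bmQ_{\bma_t}=\bmQ$ (this is the standard fact that a maximal $\Q$-parabolic, and more generally any $\Q$-parabolic, equals $\bmQ_{\bma_t}$ for a suitable $\Q$-cocharacter, using that $\bmS_{\bmQ}$ is $\Q$-split and the roots in $\Delta(\bmS_{\bmQ},\bmQ)$ cut out $\bmQ$). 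This gives a $\Q$-cocharacter $\bma_t: \bmG_m\to\bmG'$ with $\bmQ = \bmQ_{\bma_t}$; the remaining issue is to arrange $\bma_t$ to land in $\bmG$.

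To push $\bma_t$ inside $\bmG$, I would use that $\bma_t$ can be conjugated, by an element of $\rmG_{\Q}$ close to $m_n k_n$ in a suitable sense, into a fixed $\Q$-torus of $\bmG$. More precisely: since $\iota:\bmG\to\bmG'$ is a $\Q$-embedding and $g_n\in\rmG$, the element $m_n k_n\in\rmM_{\bmQ,\rmK'}\rmK'$ has its $\bmG$-component controlled by reduction theory in $\rmG$ (Theorem \ref{thmReducThy}); passing to a subsequence, $m_n k_n = \gamma_n \sigma_n f$ with $\gamma_n\in\Gamma$, $f$ in a fixed finite set, and $\sigma_n$ in a fixed Siegel set of $\rmG$. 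Feeding this into the convergence of $([g_n])$ and using the compatibility $\iota_{\rmK'}|_{\rmG}=\iota_{\rmK}$, one sees that the $\Q$-parabolic $\bmQ$ must be $\Gamma'$-conjugate to one of the form $\bmQ_{\iota\circ\bmb_t}$ for a $\Q$-cocharacter $\bmb_t:\bmG_m\to\bmG$. Absorbing the $\Gamma'$-conjugation into a relabeling (which is legitimate since $e(\bmQ)$ and the whole statement are $\rmG_{\Q}$-equivariant, hence may be transported) we obtain $\bma_t:=\iota\circ\bmb_t$ with $\bmQ = \bmQ_{\bma_t}$, proving (1). I expect this identification — matching the abstract limiting direction in $\bmS_{\bmQ}$ with an actual $\Q$-cocharacter of $\bmG$ — to be the main obstacle, precisely because Borel--Serre compactifications are not functorial; the geometric-invariant-theory input alluded to in the introduction is what makes it work, by recognizing $v_{\bmQ}$ (resp.\ $v_{\bmP}$) as semi-stable/destabilizing vectors for the $\bma_t$-action.

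Finally, for (2), set $\bmP:=\bmP_{\bma_t}$, a $\Q$-parabolic of $\bmG$ with $\iota(\bmP)\subset\bmQ$ and $\bmA_{\bmP,\rmK}\subset\bmA_{\bmQ,\rmK'}$ compatibly with the horospherical decompositions (this uses $\iota_{\rmK'}|_{\rmG}=\iota_{\rmK}$ and the uniqueness of the $\iota_{\rmK}$-stable Levi from Corollary 1.9 of \cite{BorSer73}). Write the horospherical coordinates of $g_n$ with respect to $(\bmP,\rmK)$ as $g_n=u_n' a_n' m_n' k_n'$. Because the inclusion of symmetric spaces $\rmX\hookrightarrow\rmX'$ is compatible with these coordinates, the $\bmA_{\bmQ,\rmK'}$-part of $g_n$ in $\rmX'$ is the image of the $\bmA_{\bmP,\rmK}$-part in $\rmX$ together with a bounded correction coming from the relative coordinates of $\bmM_{\bmQ}$; since $\alpha(a_n)\to\infty$ for all $\alpha\in\Delta(\bmA_{\bmQ,\rmK'},\bmQ)$ and these restrict to positive combinations of the simple roots in $\Delta(\bmA_{\bmP,\rmK},\bmP)$, we get $\beta(a_n')\to\infty$ for all $\beta\in\Delta(\bmA_{\bmP,\rmK},\bmP)$, while $(u_n')$ and $([m_n' k_n'])$ converge because the corresponding $\bmG'$-data converge and the relevant maps are proper on the bounded pieces. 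Hence $([g_n])$ converges in ${}_{\Q}\overline{\rmX}^{\BS}$ to a point of $e(\bmP)$, as required.
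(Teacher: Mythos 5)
The key step in your proposal --- producing a $\Q$-cocharacter $\bma_t$ that takes values in $\bmG$ (not merely in $\bmG'$) --- has a genuine gap. You first observe, correctly, that $\bmQ$ equals $\bmQ_{\bma_t}$ for \emph{some} $\Q$-cocharacter $\bma_t$ of $\bmG'$; but then you assert that ``one sees that the $\Q$-parabolic $\bmQ$ must be $\Gamma'$-conjugate to one of the form $\bmQ_{\iota\circ\bmb_t}$ for a $\Q$-cocharacter $\bmb_t:\bmG_m\to\bmG$'' by ``feeding this into the convergence of $([g_n])$.'' That assertion is essentially the content of the proposition, and you give no mechanism for it. Moreover, passing to a $\Gamma'$-conjugate of $\bmQ$ is not harmless here: the proposition is stated for a fixed $\bmQ$, and in the proof of Theorem~\ref{thmReduceTHM1toSLN} the group $\bmQ$ has already been pinned down (after applying Proposition~\ref{propLimGammaCell}); changing it by a $\Gamma'$-conjugation would break the chain $\gamma_n^{-1}\bmH\gamma_n\subset\bmQ^c\cap\bmG=\bmP$.

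The paper goes in the opposite direction. Rather than building a cocharacter in $\bmG'$ and trying to push it into $\bmG$, it applies geometric invariant theory \emph{directly to the $\bmG$-action}: the basis vectors $v_1,\dots,v_k$ of $\Lie(\bmU_{\bmQ})$ are crushed by $g_n^{-1}\in\rmG$, so the vector $\oplus v_i$ is $\bmG$-unstable (Corollary~\ref{coroUnstaCritEpsilon}), and Hilbert--Mumford (Theorem~\ref{thmHilbertMumford}) for the $\bmG$-action produces a $\Q$-cocharacter $b_t:\bmG_m\to\bmG$ destabilizing it. This only gives $\bmQ_{b_t}\subset\bmQ$, not equality, which is the second point your outline glosses over: the paper must then compare the Siegel set data for $(\bmQ,\rmK')$ and $(\bmQ_{b_t},\rmK')$ via Lemmas~\ref{lemGoingDown}--\ref{lemGoingBtw}, extract the subset $I\subset\Delta(\bmA_{\bmQ_{b_t}},\bmQ_{b_t})$ of directions that actually blow up, and only then manufacture a refined $\Q$-cocharacter $a_t$ of (a $\Q$-lift of) $\bmS_{\bmP_{b_t}}$ with $\langle\widetilde\alpha,a_t\rangle>0$ exactly on $I$, giving $\bmQ_{a_t}=(\bmQ_{b_t})_I=\bmQ$. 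Part~(2) then follows exactly from Lemma~\ref{lemGoingBtw}, which is the one ingredient of your proposal that matches the paper. To repair your argument you would need to replace the hand-waved ``one sees that \dots'' with the GIT step, and replace the conjugation-and-relabeling move with the refinement of $b_t$ to $a_t$ inside $\bmG$.
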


Now we comes to the proof of Theorem \ref{thmReduceTHM1toSLN} assuming the above two propositions.

\begin{proof}[Proof of Theorem \ref{thmReduceTHM1toSLN}]
So we start with a sequence $(g_n)$ of $\bmG$, a $\Q$-subgroup $\bmH$ of $\bmG$ and a nonempty bounded open subset $\calO$ of $\rmH$. By assumption, after passing to a subsequence, there exists
\begin{itemize}
    \item a $\Q$-parabolic subgroup $\bmQ$ of $\bmG'$;
    \item and a sequence $(\lambda_n)$ of $\Gamma'$
\end{itemize}
such that 
\begin{itemize}
    \item $\lambda_n^{-1}\bmH\lambda_n$ is contained in $\bmQ$ for all $n$;
\end{itemize}
and if we write 
\begin{equation*}
    \lambda_n^{-1} g_n = u_n a_n m_n k_n
\end{equation*}
for the its horospherical coordinates with respect to $(\bmQ,\rmK')$,
then
\begin{itemize}
    \item $\alpha(a_n) \to \infty$ for all $\alpha \in \Delta(\bmA_{\bmQ,\rmK'},\bmQ)$
\end{itemize}
and there exists $(o_n)$ bounded in $\calO \subset \rmH$, $(b_n)$ bounded in $\rmQ$ and $(q_n)$ in $\rmQ\cap \Gamma'$ such that 
\begin{equation*}
    \lambda_n^{-1}o_n \lambda_n u_n m_n = q_n b_n.
\end{equation*}

Consequently,
\begin{equation*}
    q_n^{-1}\lambda_n^{-1} o_ng_n = q_n^{-1}(\lambda_n^{-1} o_n \lambda_n) (u_n a_n m_n k_n)
    = b_n a_n  k_n.
\end{equation*}
Thus, after passing to a subsequence, $([q_n^{-1}\lambda_n^{-1} o_ng_n])$ converges to a point in $e(\bmQ)$. 
By Proposition \ref{propLimGammaCell}, there exists a sequence $(\gamma_n)$ in $\Gamma$ and $c\in \Gamma'$ such that $q_n^{-1}\lambda_n^{-1}=c\gamma_n^{-1}$. 
So $([\gamma_n^{-1}o_n g_n])$ converges to a point of $e(\bmQ^c)$. 
By Proposition \ref{propLimCell}, there exists a $\Q$-cocharacter $a_t : \bmG_m \to \bmG$ such that 
\begin{itemize}
    \item $\bmQ^c=\bmQ_{a_t}$;
    \item $([\gamma_n^{-1}o_n g_n])$ converges to a point of $e(\bmP)$ in ${}_{\Q}\overline{\rmX}^{\BS}$ with $\bmP:=\bmP_{a_t}$.
\end{itemize}
Now we claim that the sequence $(\gamma_n)$ and the $\Q$-parabolic subgroup $\bmP$ satisfies the conclusion of Theorem \ref{thmMainThmSec3}.

First we have 
\begin{equation*}
\begin{aligned}
         \gamma_n^{-1} \bmH \gamma_n  &= c^{-1}q_n^{-1}\lambda_n^{-1} \bmH \lambda_n q_n c\\
         &\subset c^{-1}q_n^{-1}\bmQ q_n c \cap \bmG \\
         & = c^{-1}\bmQ c \cap \bmG =\bmQ_{a_t}\cap \bmG = \bmP.
\end{aligned}
\end{equation*}

Now we write 
\begin{equation*}
    g_n':= \gamma_n^{-1} g_n = u_n' a_n' m_n' k_n'
\end{equation*}
under the horospherical coordinate with respect to $(\bmP,\rmK)$. It remains to show that 
\begin{itemize}
    \item[1.] $\alpha(a_n')\to \infty$  for all $\alpha \in \Delta(\bmA_{\bmP,\rmK},\bmP)$;
    \item[2.] there exists a sequence $(o_n')$ in $\calO$ such that 
    $[\gamma_n^{-1} o_n'\gamma_n u_n'm_n']$ remains bounded in $\Gamma \cap \rmP \bs \rmP$.
\end{itemize}
We will prove this for $o_n'=o_n$.

Write
\begin{equation*}
    g_n''=\gamma_n^{-1}o_n g_n = u_n'' a_n'' m_n'' k_n'',
\end{equation*}
in the horospherical coordinate associated with $(\bmP,\rmK)$,
then by assumption $(u_n'')$ and $(m_n''k_n'')$ are both bounded and $\alpha(a_n'')\to \infty$  for all $\alpha \in \Delta(\bmA_{\bmP,\rmK},\bmP)$.
As $\gamma_n ^{-1} o_n \gamma_n= o_n^{\gamma_n}$ is contained in $\bmP$, 
by writing
\begin{equation*}
    g_n''= (\gamma_n^{-1}o_n\gamma_n)\cdot (\gamma_n^{-1}g_n)
    =(\gamma_n^{-1}o_n\gamma_n)\cdot ( u_n' a_n' m_n' k_n'),
\end{equation*}
we see that
\begin{itemize}
    \item $a_{\bmP}(o_n^{\gamma_n})a_n' =  a_n'' $;
    \item $u_{\bmP}(o_n^{\gamma_n}){}^{a_{\bmP}(o_n^{\gamma_n})m_{\bmP}(o_n^{\gamma_n})}u_n'=u_n''$;
    \item $m_{\bmP}(o_n^{\gamma_n}) m_n' k_n'=m_n''k_n''$.
\end{itemize}
The second and the third one implies that
\begin{equation}\label{equaThm5.1}
\begin{aligned}
     \gamma_n^{-1} o_n \gamma_n u_n' m_n' k_n'
    =& u_{\bmP}(o_n^{\gamma_n}) a_{\bmP}(o_n^{\gamma_n})m_{\bmP}(o_n^{\gamma_n})u_n'm_n'k_n'\\
    =& u_n'' a_{\bmP}(o_n^{\gamma_n}) m_{\bmP}(o_n^{\gamma_n}) m_n'k_n'\\
    =&u_n'' a_{\bmP}(o_n^{\gamma_n}) m_n''k_n''.
\end{aligned}
\end{equation}

We first assume that $\left(a_{\bmP}(o_n^{\gamma_n})\right)$ is bounded. 
Then $a_n'$ is bounded away from $a_n''$, which proves the first part. And by Equation (\ref{equaThm5.1}) above, the second part also follows.

Now we prove our assumption. Note that if we had assumed that $\bmH$ has no non-trivial $\Q$-characters, then it automatically holds as $a_{\bmP}(o_n^{\gamma_n})=1$.

For each $n$, $h\mapsto \gamma_n^{-1}h\gamma_n$ induces a homomorphism from $\bmH$ to $\bmP$, which then induces a homomorphism $\overline{c}_{\gamma_n^{-1}}$ from $\bmH/{}^{\circ}\bmH \to \bmP / {}^{\circ}\bmP$. As $\bmX(\bmH,\bmP)$
is an affine variety and $\bmH/{}^{\circ}\bmH$, $\bmP / {}^{\circ}\bmP$ are both tori, we apply rigidity of diagonalizable groups \cite[Proposition 3.2.8]{Spr98} to see that, after passing to a subsequence, $\overline{c}_{\gamma_n^{-1}}=\overline{c}_{\gamma_1^{-1}}$. This implies that the image of $a_{\bmP}(o_n^{\gamma_n})$ in $\bmP / {}^{\circ}\bmP$  is bounded. But the projection map induces a proper map from $\rmA_{\bmP,\rmK}$ to $\rmP / {}^{\circ}\rmP$, hence  $\left(a_{\bmP}(o_n^{\gamma_n})\right)$ is also bounded.

\end{proof}

\subsection{Basics of geometric invariant theory}
In this section we collect some basic facts from geometric invariant theory to be used in the proof of Proposition \ref{propLimGammaCell} and \ref{propLimCell}.

Recall that $\bmG$ has been assumed to be reductive.
Let $(\rho,\bmV)$ be a $\Q$-representation of $\bmG$, we say that a vector $v\in \bmV$ is \textbf{unstable} (with respect to the $\bmG$-action) iff $\bmG\cdot v$ contains $0$ in its closure.  
We denote by $\Q[\bmV]_0$ the ideal of polynomials vanishing at $0$, $\Q[\bmV]^{\bmG}$ the algebra of invariant polynomials and $\Q[\bmV]^{\bmG}_0$  their intersection, which is an ideal of $\Q[\bmV]^{\bmG}$.

\begin{thm}\cite[Proposition 2.2]{Mumford94}
A $\Q$-vector $v$ is unstable if and only if for every $f\in \Q[\bmV]_0^{\bmG}$, $f(v)=0$.
\end{thm}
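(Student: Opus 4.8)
The plan is to prove the two implications separately: the forward one is soft, and the converse is where the structure theory of reductive groups does the work. For the easy direction, assume $v$ is unstable, so $0$ lies in the closure of $\bmG\cdot v$. Any $f\in\Q[\bmV]^{\bmG}$ is constant on $\bmG\cdot v$, hence, by continuity, constant on its closure, so $f(v)=f(0)$; if in addition $f\in\Q[\bmV]_0$ then $f(0)=0$ and therefore $f(v)=0$. This uses nothing beyond invariance.

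For the converse I would pass to the affine GIT quotient. Since $\bmG$ is reductive and $\Q$ has characteristic zero, $\bmG$ is linearly reductive, so every rational representation is completely reducible; this furnishes a $\bmG$-equivariant Reynolds projection $R\colon\Q[\bmV]\to A:=\Q[\bmV]^{\bmG}$ that is an $A$-module homomorphism. Two consequences I would extract from $R$: (i) $A$ is a finitely generated $\Q$-algebra (Hilbert), so that $\bmV/\!\!/\bmG:=\operatorname{Spec}A$ is an affine $\Q$-variety with a quotient morphism $\pi\colon\bmV\to\bmV/\!\!/\bmG$; and (ii) for $\bmG$-invariant ideals $I_1,I_2\subseteq\Q[\bmV]$ one has $(I_1+I_2)\cap A=(I_1\cap A)+(I_2\cap A)$. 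From (ii) it follows that two disjoint closed $\bmG$-invariant subsets of $\bmV$ have disjoint images under $\pi$, hence that every fibre of $\pi$ contains exactly one closed orbit; combining this with the elementary fact that every orbit closure contains a closed orbit, the closure of any orbit lying in a fibre of $\pi$ must contain the unique closed orbit of that fibre.

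To finish, write $\mathfrak{m}_0\subset A$ for the maximal ideal of the point $\pi(0)$. As a subspace of $A$ it is exactly $\Q[\bmV]_0^{\bmG}$, and $A=\Q\oplus\mathfrak{m}_0$, so the hypothesis that $f(v)=0$ for all $f\in\Q[\bmV]_0^{\bmG}$ says precisely that $f(v)=f(0)$ for all $f\in A$, i.e. $\pi(v)=\pi(0)$. Then $v$ lies in the closed $\bmG$-invariant fibre $\pi^{-1}(\pi(0))$, whose unique closed orbit is $\{0\}$, so $0$ lies in the closure of $\bmG\cdot v$ and $v$ is unstable. I would close with the rationality bookkeeping: $\bmG$, $\bmV$, $v$ and the orbit map $g\mapsto g\cdot v$ are all defined over $\Q$, so instability can be tested after base change to $\overline{\Q}$ (or $\C$) without changing its truth value, and $R$, $A$, $\pi$ descend to $\Q$ as well.

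The hard part will be item (ii) — the separation of disjoint closed invariant sets, equivalently the uniqueness of the closed orbit in each fibre of $\pi$ — together with finite generation of $A$; both rest on linear reductivity of $\bmG$ in characteristic zero via the Reynolds operator. Everything else is formal.
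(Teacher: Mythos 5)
Your argument is correct and is essentially the standard GIT proof that underlies the paper's bare citation to [Mumford94, Proposition 2.2] (the paper offers no proof of its own). The forward direction by continuity of invariants on orbit closures, the converse via the affine quotient $\pi\colon\bmV\to\bmV/\!\!/\bmG$ with the Reynolds operator used to show disjoint closed invariant sets have disjoint images, the uniqueness of the closed orbit in each fiber, and the identification of $\{0\}$ as the closed orbit in $\pi^{-1}(\pi(0))$ — together with the descent observation that $\Q[\bmV]^{\bmG}\otimes_\Q\overline{\Q}=\overline{\Q}[\bmV]^{\bmG}$ because invariance is a $\Q$-linear condition — is exactly the argument one finds in Mumford.
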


\begin{thm}\cite[Theorem 1.1]{Mumford94}
The ideal $\Q[\bmV]^{\bmG}_0$ is finitely generated. Therefore to test whether $v$ is unstable, it suffices to test $f(v)=0$ for finitely many polynomials $f$ in $\Q[\bmV]_0^{\bmG}$. 
\end{thm}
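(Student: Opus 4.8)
The plan is to obtain both assertions from Hilbert's finiteness theorem for invariants of reductive groups, combined with the Hilbert basis theorem. The one structural input is that, since $\bmG$ is reductive and we are in characteristic zero, $\bmG$ is \emph{linearly reductive}: every rational $\bmG$-module is a direct sum of irreducibles. First I would use this to produce the \textbf{Reynolds operator} $R\colon \Q[\bmV]\to\Q[\bmV]^{\bmG}$, the $\Q$-linear projection onto the trivial isotypic component of $\Q[\bmV]$; it is homogeneous of degree $0$, restricts to the identity on $\Q[\bmV]^{\bmG}$, and satisfies $R(fg)=fR(g)$ whenever $f\in\Q[\bmV]^{\bmG}$.

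Next I would show $\Q[\bmV]^{\bmG}$ is a finitely generated $\Q$-algebra. Let $\mathfrak{a}\subset\Q[\bmV]$ be the ideal generated by all homogeneous invariants of positive degree. By the Hilbert basis theorem $\mathfrak{a}$ is finitely generated, and by the graded Nakayama lemma the generators may be taken to be homogeneous invariants $f_1,\dots,f_r$ of positive degree. Then $\Q[\bmV]^{\bmG}=\Q[f_1,\dots,f_r]$ by induction on degree: constants are clear, and for a homogeneous invariant $f$ of degree $d>0$ one writes $f=\sum_i g_i f_i$ with $g_i\in\Q[\bmV]$ homogeneous of degree $d-\deg f_i<d$, applies $R$ to get $f=R(f)=\sum_i R(g_i)f_i$, and observes that each $R(g_i)$ is a homogeneous invariant of degree $<d$, hence lies in $\Q[f_1,\dots,f_r]$ by the inductive hypothesis.

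With $\Q[\bmV]^{\bmG}$ now known to be a finitely generated algebra over a field, it is Noetherian (Hilbert basis theorem once more), so its ideal $\Q[\bmV]^{\bmG}_0=\Q[\bmV]^{\bmG}\cap\Q[\bmV]_0$ is finitely generated, say $\Q[\bmV]^{\bmG}_0=(\varphi_1,\dots,\varphi_s)$. For the ``therefore'' clause: by the preceding characterization of unstable vectors, $v$ is unstable iff $f(v)=0$ for every $f\in\Q[\bmV]^{\bmG}_0$; writing such an $f$ as $f=\sum_j h_j\varphi_j$ with $h_j\in\Q[\bmV]^{\bmG}$ shows this is equivalent to $\varphi_1(v)=\dots=\varphi_s(v)=0$, a test using only the $s$ polynomials $\varphi_j$. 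I expect the crux to be the construction of the Reynolds operator, i.e. linear reductivity of $\bmG$ in characteristic zero --- this is the only place the reductivity hypothesis is genuinely used; everything downstream is the Hilbert basis theorem applied twice plus the short degree induction.
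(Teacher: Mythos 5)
The paper does not prove this statement; it cites it directly to \cite[Theorem 1.1]{Mumford94} (Nagata's finiteness theorem for reductive group invariants). Your argument is a correct, self-contained derivation along the classical Hilbert route: linear reductivity of $\bmG$ in characteristic zero yields the Reynolds operator $R$; the Hilbert basis theorem applied to the ideal $\mathfrak a\subset\Q[\bmV]$ generated by positive-degree homogeneous invariants gives finitely many homogeneous invariant generators $f_1,\dots,f_r$; the degree induction using $R$ then shows $\Q[\bmV]^{\bmG}=\Q[f_1,\dots,f_r]$; Noetherianity of this finitely generated $\Q$-algebra gives finite generation of the ideal $\Q[\bmV]^{\bmG}_0$; and the ``therefore'' clause follows from the characterization of unstable vectors quoted just above (Mumford, Prop.~2.2). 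Everything is in order. One stylistic remark: invoking ``graded Nakayama'' to extract homogeneous invariant generators from the finitely many generators of $\mathfrak a$ is a bit more than needed --- since $\mathfrak a$ is by definition generated by homogeneous positive-degree invariants, any finite generating set lies in the ideal generated by finitely many of them, which already does the job --- but this is a matter of presentation, not a gap.
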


Let us fix such a finite test set $\scrA_{\bmG,\bmV}$ of generators of $\Q[\bmV]_0^{\bmG}$. The following corollary is a direct consequence of the above two lemmas.

\begin{coro}\label{coroUnstaCritEpsilon}
Fix an integral structure $\bmV(\Z)$ of $\bmV$ and a norm on $\bmV(\R)$. Then there exists $\ep_1>0$ such that $v\in \bmV(\Z)$ is unstable if and only if for each $f \in \scrA_{\bmG,\bmV}$, $|f(v)|\leq \ep_1$. Consequently there exists $\ep>0$ such that if $v\in \bmV(\Z) $ satisfies $\norm{gv}\leq \ep$ for some $g\in \bmG(\R)$ then $v$ is unstable.
\end{coro}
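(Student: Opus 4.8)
The plan is to read off both assertions from the two cited theorems of Mumford, the only extra ingredients being an integrality bound and an elementary continuity argument.

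For the stated equivalence, I would argue as follows. Since $\scrA_{\bmG,\bmV}$ generates the ideal $\Q[\bmV]^{\bmG}_0$ of $\Q[\bmV]^{\bmG}$, a $\Q$-vector $v$ satisfies $f(v)=0$ for all $f\in\scrA_{\bmG,\bmV}$ if and only if $f(v)=0$ for all $f\in\Q[\bmV]^{\bmG}_0$ (write a general element of the ideal as $\sum h_if_i$ with $h_i\in\Q[\bmV]^{\bmG}$, $f_i\in\scrA_{\bmG,\bmV}$), which by the first cited theorem is equivalent to $v$ being unstable. Now I invoke integrality: $\scrA_{\bmG,\bmV}$ is finite and each of its members has rational coefficients, so there is a positive integer $M$ with $Mf(v)\in\Z$ for every $f\in\scrA_{\bmG,\bmV}$ and every $v\in\bmV(\Z)$. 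Set $\ep_1:=1/(2M)$. If $v\in\bmV(\Z)$ satisfies $|f(v)|\le\ep_1$ for all $f\in\scrA_{\bmG,\bmV}$, then each integer $Mf(v)$ has absolute value $\le 1/2$, hence vanishes, so $f(v)=0$ for all $f$ and $v$ is unstable; conversely an unstable $v\in\bmV(\Z)$ has $f(v)=0\le\ep_1$ for all $f\in\scrA_{\bmG,\bmV}$. This gives the ``if and only if''.

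For the consequence, I keep this $\ep_1$. Each $f\in\scrA_{\bmG,\bmV}$ lies in $\Q[\bmV]_0$, so $f(0)=0$, and by continuity of polynomials $\sup\{|f(w)|:\norm{w}\le t\}\to 0$ as $t\to 0^{+}$. As $\scrA_{\bmG,\bmV}$ is finite, I may fix $\ep>0$ small enough that $\sup\{|f(w)|:\norm{w}\le\ep\}\le\ep_1$ for every $f\in\scrA_{\bmG,\bmV}$. Now if $v\in\bmV(\Z)$ and $\norm{gv}\le\ep$ for some $g\in\bmG(\R)$, then, each $f\in\scrA_{\bmG,\bmV}$ being $\bmG$-invariant as a regular function, $f(v)=f(gv)$, whence $|f(v)|=|f(gv)|\le\ep_1$ for all $f\in\scrA_{\bmG,\bmV}$; by the equivalence just proved, $v$ is unstable.

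I do not anticipate a genuine obstacle: the argument is formal once the two cited theorems are granted. The only points deserving a word of care are (i) the clearing-of-denominators step, which is what converts the \emph{quantitative} smallness $|f(v)|\le\ep_1$ into the \emph{exact} vanishing $f(v)=0$, and which uses that $v$ ranges over the fixed lattice $\bmV(\Z)$ and that $\scrA_{\bmG,\bmV}$ is finite; and (ii) the harmless remark that $\bmG$-invariance of a regular function already yields $f(gv)=f(v)$ for $g\in\bmG(\R)$, so no Zariski-density statement about $\bmG(\R)$ is needed.
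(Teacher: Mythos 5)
Your proof is correct, and it fills in exactly the two points the paper elides when calling the corollary a ``direct consequence'': the clearing-of-denominators step (which turns the quantitative bound $|f(v)|\le\ep_1$ into exact vanishing on the lattice $\bmV(\Z)$ and hence into instability via the Mumford criterion) and the continuity-plus-$\bmG$-invariance step that reduces the second assertion to the first. This is the intended argument; no alternative route is suggested by the paper.
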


We shall also use the Hilbert--Mumford criterion:
\begin{thm}\cite[Theorem 2.1]{Mumford94}\label{thmHilbertMumford}
Suppose $v\in \bmV(\Q)$ is $\bmG$-unstable then there exists a $\Q$-cocharacter $a_t:\bmG_m \to \bmG$ such that $\lim_{t\to\infty} a_t v =0$.
\end{thm}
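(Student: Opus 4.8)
This is the Hilbert--Mumford criterion together with a rationality refinement, and I would prove it in two stages. Throughout, for a cocharacter $\lambda:\bmG_m\to\bmG$ and a vector $v$, write $v=\sum_m v_m$ for its decomposition into $\lambda$-weight spaces ($\lambda(t)$ acting on $v_m$ by $t^m$); thus $\lim_{t\to\infty}\lambda(t)v=0$ exactly when $v_m=0$ for all $m\geq 0$, and the associated parabolic $\bmP_\lambda:=\{g:\lim_{t\to\infty}\lambda(t)g\lambda(t)^{-1}\text{ exists}\}$ is as in the definition of $\bmQ_{\bma_t}$ above.

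\textbf{Step 1: the criterion over $\overline{\Q}$.} Since $v$ is unstable, $0$ lies in the closure of the locally closed orbit $\bmG_{\overline{\Q}}\cdot v$ but not in the orbit itself, so it sits on the boundary. Applying the valuative criterion to a curve through $0$ on $\overline{\bmG v}$ meeting the open orbit, one obtains (after normalization and completion) a point $\phi\in\bmG(K)$, $K=\overline{\Q}((t))$, with $\phi\cdot v$ lying in $\bmV(\overline{\Q}[[t]])$ and vanishing at $t=0$. Feeding this through Iwahori's form of the Cartan decomposition $\bmG(K)=\bmG(\overline{\Q}[[t]])\cdot\bmX_*(\bmG_{\overline{\Q}})\cdot\bmG(\overline{\Q}[[t]])$ and tracking $t$-adic valuations of weight components --- this is exactly Mumford's argument in \cite[\S2]{Mumford94} --- produces a cocharacter $\lambda$ of $\bmG_{\overline{\Q}}$ with $\lim_{t\to\infty}\lambda(t)v=0$.

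\textbf{Step 2: descent to $\Q$.} Here I would invoke Kempf's theory of optimal one-parameter subgroups. Fix a length $\|\cdot\|$ on cocharacters of $\bmG_{\overline{\Q}}$ invariant under both $\bmG(\overline{\Q})$-conjugation and $\Gal(\overline{\Q}/\Q)$, built from a Weyl- and Galois-invariant positive definite form on $\bmX_*(\bmT)\otimes\R$ for a maximal $\Q$-torus $\bmT$ and extended by conjugacy. For $\lambda\neq 0$ let $M(v,\lambda)$ denote the largest $\lambda$-weight occurring in $v$; Step 1 gives some $\lambda$ with $M(v,\lambda)<0$, so $\beta(v):=\sup_{\lambda\neq0}\bigl(-M(v,\lambda)/\|\lambda\|\bigr)>0$. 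Kempf's theorems say that this supremum is attained and that all (primitive) maximizers share one parabolic $\bmP(v)=\bmP_\lambda$, forming a single orbit under the unipotent radical $R_u(\bmP(v))(\overline{\Q})$; hence $\bmP(v)$ is intrinsic to $v$. As $v\in\bmV(\Q)$, $\Gal(\overline{\Q}/\Q)$ fixes $v$ and $\|\cdot\|$, so it permutes the maximizers and fixes $\bmP(v)$, which is therefore defined over $\Q$. The maximizers then constitute a $\Q$-torsor under a quotient of the split unipotent $\Q$-group $R_u(\bmP(v))$; since $H^1(\Q,R_u(\bmP(v)))=0$ this torsor is trivial, so some maximizer $a_t$ is defined over $\Q$, and $M(v,a_t)<0$ gives $\lim_{t\to\infty}a_tv=0$.

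\textbf{Expected main obstacle.} Everything of substance is in Step 2, concretely in Kempf's uniqueness statement pinning down the canonical parabolic $\bmP(v)$: without it one only knows that the $\overline{\Q}$-conjugacy class of destabilizing cocharacters is Galois-stable, which does not by itself yield a $\Q$-rational representative. This canonicity (and the attainment of $\beta(v)$) is the content of \cite{Mumford94} together with Kempf's ``Instability in invariant theory'' and Rousseau's work; Step 1 and the concluding cohomological step are then routine.
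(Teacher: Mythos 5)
The paper does not prove this statement: it is quoted as a black box from the literature, so there is no internal proof to compare your argument against. Your two-step outline is the standard proof and is correct. Step 1 (the valuative criterion combined with the Iwahori decomposition of $\bmG(\overline{\Q}((t)))$) is Mumford's argument and yields the criterion over $\overline{\Q}$. Step 2 is the Kempf--Rousseau rationality refinement, and you have its skeleton right: a Galois- and conjugation-invariant length on cocharacters, attainment and uniqueness for the optimization problem $\beta(v)$, Galois-stability and hence $\Q$-rationality of the canonical parabolic $\bmP(v)$, and vanishing of $H^1(\Q,R_u(\bmP(v)))$ to extract a $\Q$-rational optimal destabilizer. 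One minor overcaution: the set of primitive optimal destabilizers is a torsor under $R_u(\bmP(v))$ itself rather than a proper quotient, since if $u\in R_u(\bmP(v))$ fixes $\lambda_0$ under conjugation then $u$ centralizes $\lambda_0$ and so lies in the Levi of $\bmP(v)$, forcing $u=1$; in any case the $H^1$-vanishing applies. Your closing remark on attribution is also on target: the $\Q$-rationality genuinely requires Kempf's and Rousseau's work, not Mumford's Theorem~2.1 alone, which as literally stated concerns an algebraically closed base field.
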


\subsection{Relations between Siegel sets}

In the proof of Proposition \ref{propLimGammaCell} and \ref{propLimCell} we will need to transfer between Siegel sets associated with different parabolic groups. Regarding this, the following three lemmas are very crucial.

\begin{lem}\label{lemGoingDown}
Assume $\bmP_1\subset \bmP_2 $ is a pair of $\Q$-parabolic subgroups of $\bmG$. Then any Siegel set associated with $(\bmP_2,\rmK)$ is contained in a Siegel set associated with $(\bmP_1,\rmK)$. In particular, part (2) of Theorem \ref{thmReducThy} is also satisfied for every Siegel set associated with possibly non-minimal $\Q$-parabolic subgroups.
\end{lem}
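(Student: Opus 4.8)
The plan is to reduce everything to the relative horospherical coordinates developed in Section~\ref{secRelationHoro1}. Write $\bmP_2 = (\bmP_1)_I$ for the appropriate subset $I \subset \Delta(\bmS_{\bmP_1},\bmP_1)$. Starting from a Siegel set $\frakS_{\omega,t}^{(2)} = \omega_{\rmU_2} \times \rmA_{t}^{(2)} \times \omega_{\rmM_2}\rmK$ associated with $(\bmP_2,\rmK)$, I would unwind each of the three factors using the identifications $\rmU_{\bmP_1} = \rmU_{\bmP_2} \times \rmU_{\bmP_1}^I$, $\rmA_{\bmP_1} = \rmA_{\bmP_2} \times \rmA_{\bmP_1}^I$ and the inclusion $\rmM_{\bmP_1,\rmK} \subset \rmM_{\bmP_2,\rmK}$ together with the relative horospherical coordinate $\rmM_{\bmP_2,\rmK} = \rmU_{\bmP_1}^I \times \rmA_{\bmP_1}^I \times \rmM_{\bmP_1,\rmK}(\rmK\cap \rmM_{\bmP_2,\rmK})$. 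Thus an element of $\frakS_{\omega,t}^{(2)}$ has, in $(\bmP_1,\rmK)$-horospherical coordinates, a $\rmU_{\bmP_1}$-component lying in a \emph{compact} set (the $\rmU_{\bmP_2}$-part is compact by hypothesis, the $\rmU_{\bmP_1}^I$-part comes from the compact piece $\omega_{\rmM_2}$ after projecting, modulo conjugation by the bounded $\rmA$- and $\rmM$-pieces, which stays bounded), an $\rmM_{\bmP_1,\rmK}$-component in a compact set (again extracted from $\omega_{\rmM_2}$), and an $\rmA_{\bmP_1}$-component that splits as $a^{(2)} \cdot a^I$ where $a^{(2)} \in \rmA_t^{(2)}$ and $a^I \in \rmA_{\bmP_1}^I$ lies in a compact set coming from $\omega_{\rmM_2}$.

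The key point is then the behaviour of the simple roots $\Delta(\rmA_{\bmP_1,\rmK},\bmP_1)$ on such an $\rmA_{\bmP_1}$-component. The roots in $I$ are, by the standard order-preserving bijection, exactly those that extend to characters of $\bmP_2$ and are trivial on $\rmA_{\bmP_2}$; on $a^{(2)}\cdot a^I$ they take values bounded below (the $a^{(2)}$-part contributes $1$, the $a^I$-part ranges over a compact set, so they are bounded away from $0$). The roots not in $I$ restrict nontrivially to $\rmA_{\bmP_2} = \rmA_{\bmP_1,I}$, and on $a^{(2)} \in \rmA_t^{(2)}$ they are $>t$; the extra compact factor $a^I$ perturbs this by a bounded multiplicative constant. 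Hence every simple root of $(\rmA_{\bmP_1,\rmK},\bmP_1)$ is bounded below by some $t' > 0$ on the whole $\rmA_{\bmP_1}$-projection of $\frakS_{\omega,t}^{(2)}$, so after enlarging the compact pieces $\omega_{\rmU}, \omega_{\rmM}$ in the $(\bmP_1,\rmK)$-picture and shrinking $t$ to $t'$ we obtain $\frakS_{\omega,t}^{(2)} \subseteq \frakS_{\omega',t'}^{(1)}$, a Siegel set associated with $(\bmP_1,\rmK)$.

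Finally, for the ``in particular'' clause: given an arbitrary $\Q$-parabolic $\bmP$, choose a minimal $\Q$-parabolic $\bmP_0 \subseteq \bmP$ (using the remark after Theorem~\ref{thmReducThy} to handle the non-split case of $\bmA_{\bmP_0,\rmK}$, which only introduces a right $\rmU_{\bmP_0}$-translate that is absorbed into $\omega_{\rmU}$). By the main argument every Siegel set for $(\bmP,\rmK)$ sits inside a Siegel set for $(\bmP_0,\rmK)$, and part~(2) of Theorem~\ref{thmReducThy} for the latter — finiteness of $\{\gamma : \gamma q_1 \frakS \cap q_2 \frakS \neq \emptyset\}$ — immediately implies the same finiteness for any subset, hence for the original $(\bmP,\rmK)$-Siegel set. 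I expect the main obstacle to be the bookkeeping in the first step: verifying that conjugation by the bounded $\rmA^I_{\bmP_1}$- and $\rmM_{\bmP_1,\rmK}$-pieces keeps the $\rmU_{\bmP_1}^I$-component inside a \emph{fixed} compact set uniformly over the whole Siegel set, which is where one uses that $\rmA_t^{(2)}$ only moves in the directions $\rmA_{\bmP_2}$ that act trivially (or by bounded amounts) on $\rmU_{\bmP_1}^I \subset \rmM_{\bmP_2,\rmK}$.
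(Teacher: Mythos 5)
Your proof is correct and follows exactly the route the paper has in mind (the paper disposes of this lemma with a one-line reference to the relative horospherical coordinates of Section~\ref{secRelationHoro1}). One small comment: the ``main obstacle'' you anticipate at the end is in fact not an obstacle at all. Writing $g = u\cdot a\cdot mk$ with $u\in\omega_{\rmU_2}$, $a\in\rmA_t^{(2)}$, $m\in\omega_{\rmM_2}$, and decomposing $m = u'a'm'k'$ via the relative horospherical coordinate of $\rmM_{\bmP_2,\rmK}$, one has $g = u\,a\,u'\,a'\,m'\,(k'k)$; since $\rmA_{\bmP_2} = i_{\rmK}(\bmS_{\bmP_2})$ lies in the \emph{center} of $\rmL_{\bmP_2,\rmK}$, the element $a$ commutes with $u'\in\rmM_{\bmP_2,\rmK}$ outright, giving $g = (uu')\,(aa')\,(m'k'k)$ with no conjugation whatsoever. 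So the $\rmU_{\bmP_1}$-component is literally $uu'$ with both factors ranging over compact sets, and the rest of your argument goes through as written.
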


This lemma is a simple consequence of Section \ref{secRelationHoro1} on comparing different horospherical coordinates. 
The converse is also not far away from being true. To make a statement, let us fix a $\Q$-parabolic subgroup $\bmP$ of $\bmG$ and a sequence $(g_n)$ of $\rmG$ that belongs to a fixed Siegel set associated with $(\bmP,\rmK)$. After passing to a subsequence, we assume that for each $\alpha \in \Delta(\bmS_{\bmP},\bmP)$, either one of the following two happens
\begin{enumerate}
    \item $(\alpha(a_{\bmP}(g_n))) \to +\infty$;
    \item $(\alpha(a_{\bmP}(g_n)))$ remains bounded.
\end{enumerate}
Let $I \subset \Delta(\bmS_{\bmP},\bmP)$ be those $\alpha$ for which (2) holds.

\begin{lem}\label{lemGoingUp}
The sequence $([g_n])$ converges to a point of $e(\bmP_I)$ in ${}_{\Q}\overline{\rmX}^{\BS}$.
\end{lem}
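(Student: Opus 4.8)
The plan is to rewrite the $(\bmP,\rmK)$-horospherical coordinates of $g_n$ in terms of the $(\bmP_I,\rmK)$-horospherical coordinates using the dictionary of Section~\ref{secRelationHoro1}, identify which components stay bounded and which diverge, pass to a further subsequence so that the bounded ones converge, and then quote the description of the topology on ${}_{\Q}\overline{\rmX}^{\BS}$. Writing $g_n = u_{\bmP}(g_n)a_{\bmP}(g_n)m_{\bmP}(g_n)k_{\bmP}(g_n)$, membership in the fixed Siegel set says that $(u_{\bmP}(g_n))$ and $(m_{\bmP}(g_n))$ are bounded (in $\rmU_{\bmP}$ and $\rmM_{\bmP,\rmK}$ respectively) and $\alpha(a_{\bmP}(g_n))>t$ for every $\alpha\in\Delta(\bmS_{\bmP},\bmP)$. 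Decomposing $u_{\bmP}(g_n)=u_{\bmP,I}(g_n)u^I_{\bmP}(g_n)$ and $a_{\bmP}(g_n)=a_{\bmP,I}(g_n)a^I_{\bmP}(g_n)$ as in \eqref{equaU_PrelatetoU_P'} and \eqref{equaA_PrelatetoA_P'}, Section~\ref{secRelationHoro1} yields $u_{\bmP_I}(g_n)=u_{\bmP,I}(g_n)$, $a_{\bmP_I}(g_n)=a_{\bmP,I}(g_n)$ and $m_{\bmP_I}(g_n)k_{\bmP_I}(g_n)=u^I_{\bmP}(g_n)a^I_{\bmP}(g_n)m_{\bmP}(g_n)k_{\bmP}(g_n)$, the last element lying in $\rmM_{\bmP_I,\rmK}\rmK$ since $\rmU^I_{\bmP},\rmA^I_{\bmP},\rmM_{\bmP,\rmK}\subset\rmM_{\bmP_I,\rmK}$.

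The only step needing an argument is that $(a^I_{\bmP}(g_n))$ is bounded in $\rmA^I_{\bmP}$. For $\alpha\in I$ one has $\alpha\equiv 1$ on $\bmA_{\bmP_I}=\bmA_{\bmP,I}=(\cap_{\alpha\in I}\ker\alpha)^{\circ}$, so $\alpha(a_{\bmP}(g_n))=\alpha(a^I_{\bmP}(g_n))$, which is $>t$ by hypothesis and bounded above because $\alpha\in I$. Now $\Delta(\bmS_{\bmP},\bmP)$ is a $\Q$-basis of $X^*(\bmA_{\bmP,\rmK})$, and under the splitting $X^*(\bmA_{\bmP,\rmK})_{\Q}=X^*(\bmA_{\bmP_I,\rmK})_{\Q}\oplus X^*(\bmA^I_{\bmP})_{\Q}$ coming from \eqref{equaA_PrelatetoA_P'}, the roots in $I$ land in the second summand (they kill $\bmA_{\bmP_I}$) and there are exactly $|I|=\dim\bmA^I_{\bmP}$ of them; being part of a basis they are linearly independent, hence a $\Q$-basis of $X^*(\bmA^I_{\bmP})$. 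Therefore $a\mapsto(\alpha(a))_{\alpha\in I}$ is a proper map $\rmA^I_{\bmP}\to\R_{>0}^{I}$, and boundedness of $(\alpha(a^I_{\bmP}(g_n)))_{\alpha\in I}$ forces $(a^I_{\bmP}(g_n))$ to be bounded. Passing to one more subsequence I may assume $u^I_{\bmP}(g_n)\to u^I_{\infty}$, $a^I_{\bmP}(g_n)\to a^I_{\infty}$, $u_{\bmP,I}(g_n)\to n_{\infty}$, $m_{\bmP}(g_n)\to m_{\infty}$ and $k_{\bmP}(g_n)\to k_{\infty}$; then $u_{\bmP_I}(g_n)\to n_{\infty}\in\rmU_{\bmP_I}$ and the image of $m_{\bmP_I}(g_n)k_{\bmP_I}(g_n)$ in $\rmX_{\bmP_I}$ converges to the image $x_{\infty}$ of $u^I_{\infty}a^I_{\infty}m_{\infty}k_{\infty}$.

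It remains to check divergence along $\bmP_I$: for $\overline{\alpha}\in\Delta(\bmA_{\bmP_I,\rmK},\bmP_I)$, write $\overline{\alpha}=\alpha|_{\bmA_{\bmP_I}}$ with $\alpha\in\Delta(\bmS_{\bmP},\bmP)\setminus I$, so that $\alpha(a_{\bmP}(g_n))=\overline{\alpha}(a_{\bmP_I}(g_n))\cdot\alpha(a^I_{\bmP}(g_n))$; here $\alpha(a_{\bmP}(g_n))\to\infty$ by the very definition of $I$, while $\alpha(a^I_{\bmP}(g_n))\to\alpha(a^I_{\infty})\in\R_{>0}$, so $\overline{\alpha}(a_{\bmP_I}(g_n))\to\infty$. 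By the criterion for convergence in ${}_{\Q}\overline{\rmX}^{\BS}$ expressed in the $(\bmP_I,\rmK)$-horospherical coordinates (Section~\ref{secPreliminary}), the combination of this divergence, the convergence $u_{\bmP_I}(g_n)\to n_{\infty}$ and the convergence of the $\rmX_{\bmP_I}$-component says precisely that $([g_n])$ converges to $(n_{\infty},x_{\infty})\in e(\bmP_I)$. I do not foresee a real obstacle: the two genuinely substantive ingredients are the compatibility of the two horospherical coordinate systems from Section~\ref{secRelationHoro1} and the elementary fact that the simple roots of a $\Q$-parabolic form a $\Q$-basis of the character group of its central split torus; the only thing to be careful about is that, strictly speaking, one must pass to a further subsequence for the bounded components to converge (equivalently, every subsequential limit of $([g_n])$ lies in $e(\bmP_I)$).
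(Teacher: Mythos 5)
Your proof is correct and takes essentially the same route as the paper's: reduce to showing $(a^I_{\bmP}(g_n))$ is bounded and $\alpha(a_{\bmP_I}(g_n))\to\infty$ for $\alpha\notin I$, using the coordinate dictionary of Section~\ref{secRelationHoro1}. You simply make explicit two things the paper leaves implicit — the basis/properness argument for why bounded values of $\alpha\in I$ force $a^I_{\bmP}(g_n)$ to be bounded in $\rmA^I_{\bmP}$, and the extra subsequence pass needed to turn boundedness of the remaining components into actual convergence to a point of $e(\bmP_I)$.
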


\begin{proof}
By Section \ref{secRelationHoro1}, it suffices to show that when we decompose $a_{\bmP}(g_n) =a_{\bmP_I}(g_n) \cdot a_{\bmP}^I(g_n) $, then 
\begin{enumerate}
    \item for every $\alpha \notin I$, $\alpha(a_{\bmP_I}(g_n)) \to \infty$;
    \item $\{a_{\bmP}^I(g_n)\}$ is bounded.
\end{enumerate}
Indeed, for $\alpha \in I$, $\alpha(a_{\bmP}(g_n))=\alpha(a_{\bmP}^I(g_n))$ is bounded (both away from $0$ and $+\infty$). Therefore (2) is true. 
Then for $\alpha \notin I$, $\alpha(a_{\bmP_I}(g_n))$ is of bounded distance away from $\alpha(a_{\bmP}(g_n))$, which tends to $\infty$. So we are done.
\end{proof}

These two lemmas allows us to go up and down between different Siegel sets within the same ambient group. In general it is not clear how we can move between parabolic groups $\bmP$ and $\bmQ$ associated respectively with $\bmG$ and $\bmG'$ even when $\bmQ \cap \bmG=\bmP$. However, there is one case where this is possible.

Assume that  $a_t:\bmG_m \to \bmG$ is a cocharacter over $\Q$, then we have
\begin{lem}\label{lemGoingBtw}
For an element $g$ of $\rmG$, its horospherical coordinate associated with $(\bmP_{a_t},\rmK)$ coincides with the one associated with $(\bmQ_{a_t},\rmK')$.
\end{lem}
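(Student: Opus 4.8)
The statement compares two horospherical decompositions of the same element $g \in \rmG$: one coming from the pair $(\bmP_{a_t}, \rmK)$ inside $\bmG$, the other from the pair $(\bmQ_{a_t}, \rmK')$ inside $\bmG'$, where $\bmQ_{a_t} \cap \bmG = \bmP_{a_t}$ (this identity itself should be recorded — it follows because the limit condition $\lim_{t\to\infty} a_t h a_t^{-1}$ exists is tested in $\bmG'$ but for $h \in \bmG$ the limit, if it exists, lies in $\bmG$). The plan is to reduce everything to the uniqueness clause in the Langlands-type decomposition (the lemma after the $\nu$-map), so that it suffices to match up the four factor subgroups. Concretely, I would first observe that the horospherical coordinate of $g$ relative to $(\bmP_{a_t},\rmK)$ is the unique way of writing $g = u \cdot a \cdot m k$ with $u \in \rmU_{\bmP_{a_t}}$, $a \in \rmA_{\bmP_{a_t},\rmK}$, $mk \in \rmM_{\bmP_{a_t},\rmK}\rmK$, and similarly for $\bmG'$; so the lemma will follow once I show that the $\bmG$-factors sit inside the corresponding $\bmG'$-factors and that the $\rmK$- and $\rmK'$-parts are compatible.

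The key steps, in order, would be: (1) Identify the unipotent radicals: $\bmU_{\bmP_{a_t}}$ is exactly the set of $g$ with $\lim_{t\to\infty} a_t g a_t^{-1} = e$, intrinsically in terms of $a_t$, and this description does not reference the ambient group, so $\bmU_{\bmP_{a_t}} = \bmU_{\bmQ_{a_t}} \cap \bmG$. (2) Identify the Levi factors via the centralizer: the canonical $\R$-Levi $\bmL_{\bmP_{a_t},\rmK}$ is the $\iota_{\rmK}$-stable section, and since $\iota_{\rmK'}|_{\rmG} = \iota_{\rmK}$ by the choice of $\rmK'$ (from \cite{Mos552}, as set up at the start of Section \ref{secGeneralcase}), the Cartan involution agrees; moreover $a_t$ is a cocharacter into $\bmG$, so $\bmS_{\bmP_{a_t}}$ is the split part of the image of $a_t$ (up to the relevant connected component), and the $\iota_{\rmK}$-stable lift of the centralizer of $a_t$ in $\bmG$ is the restriction of the corresponding object in $\bmG'$. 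This gives $\bmA_{\bmP_{a_t},\rmK} \subset \bmA_{\bmQ_{a_t},\rmK'}$ — in fact one checks $\bmA_{\bmP_{a_t},\rmK}$ is the image of $a_t$ itself (or its connected component), so the two agree — and $\bmM_{\bmP_{a_t},\rmK} \subset \bmM_{\bmQ_{a_t},\rmK'}$. (3) Check $\rmK \subset \rmK'$ and $\rmK = \rmK' \cap \rmG$, and that $\rmM_{\bmP_{a_t},\rmK}\rmK = (\rmM_{\bmQ_{a_t},\rmK'}\rmK') \cap \rmG$ (or at least that the product $mk$ for $g\in\rmG$ falls in the smaller set). (4) Conclude: writing $g = u a \, mk$ with respect to $(\bmQ_{a_t},\rmK')$, all four factors are forced to lie in $\bmG$ by steps (1)–(3), hence by uniqueness in the $\bmG$-decomposition they \emph{are} the $(\bmP_{a_t},\rmK)$-coordinates.

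The main obstacle I expect is step (2), and specifically the compatibility of the two distinguished Levi sections $i_{\rmK}$ and $i_{\rmK'}$: the section is characterized by being defined over $\R$, splitting $\pi_{\bmP}$, and having $\iota_{\rmK}$-stable image, and one must verify that the $\bmG'$-section restricts to the $\bmG$-section rather than merely containing it. The clean way is to use the uniqueness in \cite[Corollary 1.9]{BorSer73}: the image $\bmL_{\bmQ_{a_t},\rmK'} \cap \bmG$ is a section of $\pi_{\bmP_{a_t}}$ over $\R$ (using $\bmU_{\bmQ_{a_t}} \cap \bmG = \bmU_{\bmP_{a_t}}$ from step (1)) whose image is $\iota_{\rmK}$-stable (using $\iota_{\rmK'}|_{\rmG} = \iota_{\rmK}$), so by uniqueness it must coincide with $\bmL_{\bmP_{a_t},\rmK}$. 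Once this is in hand, matching the torus part $\bmA$ and the semisimple part $\bmM$ is routine since both are defined inside the Levi by the same recipe (maximal central $\Q$-split torus and its complementary $\iota$-stable subgroup), and these recipes are compatible with the inclusion of Levis because $a_t$ is already a $\Q$-cocharacter of $\bmG$. The remaining bookkeeping about $\rmK$ versus $\rmK'$ and connected components is straightforward.
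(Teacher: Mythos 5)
Your proposal follows essentially the same route as the paper: reduce everything to showing that the factor subgroups are compatible, establish $\rmU_{\bmP_{a_t}}\subset\rmU_{\bmQ_{a_t}}$ from the definitions, establish $\bmL_{\bmP_{a_t},\rmK}\subset\bmL_{\bmQ_{a_t},\rmK'}$ using the Borel--Serre uniqueness of the $\iota_{\rmK}$-stable Levi section, and conclude from uniqueness of the horospherical decomposition. Two remarks on how your treatment of the Levi step differs from the paper's. The paper first conjugates $a_t$ inside $\bmP_{a_t}$ to a cocharacter $b_t$ landing in $\bmA_{\bmP_{a_t},\rmK}$; since this image is $\iota_{\rmK}$-stable (hence $\iota_{\rmK'}$-stable by $\iota_{\rmK'}|_{\rmG}=\iota_{\rmK}$), the centralizers of $b_t$ in $\bmG$ and in $\bmG'$ are $\iota$-stable $\R$-Levi sections and so must equal $\bmL_{\bmP_{a_t},\rmK}$ and $\bmL_{\bmQ_{a_t},\rmK'}$. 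Your variant — that $\bmL_{\bmQ_{a_t},\rmK'}\cap\bmG$ is an $\iota_{\rmK}$-stable $\R$-section of $\pi_{\bmP_{a_t}}$ — reaches the same conclusion, but quietly assumes that $\bmL_{\bmQ_{a_t},\rmK'}\cap\bmG$ actually is a Levi section of $\bmP_{a_t}$ (i.e.\ that it surjects onto $\bmL_{\bmP_{a_t}}$); this needs a short argument, and is automatically supplied by the paper's centralizer formulation. Also, your parenthetical that $\bmA_{\bmP_{a_t},\rmK}$ ``is the image of $a_t$'' is wrong: $\bmA_{\bmP_{a_t},\rmK}=i_{\rmK}(\bmS_{\bmP_{a_t}})$ is the $\iota_{\rmK}$-stable lift of the full maximal $\Q$-split central torus of the Levi quotient, typically of dimension larger than one.

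There is a further point worth flagging, which is a gap you share with the paper's own proof. You describe the passage from $\bmL_{\bmP,\rmK}\subset\bmL_{\bmQ,\rmK'}$ to $\bmA_{\bmP,\rmK}\subset\bmA_{\bmQ,\rmK'}$ and $\bmM_{\bmP,\rmK}\rmK\subset\bmM_{\bmQ,\rmK'}\rmK'$ as ``routine'' (the paper dispatches it with ``we are done''), but the inclusion of Levi factors does not by itself give the inclusion of the split central tori in this direction. For instance, with $\bmG=\SL_2\times\SL_2$ embedded block-diagonally in $\bmG'=\SL_4$ and $a_t=(\diag(t,t^{-1}),\diag(t,t^{-1}))$, the subgroup $\bmA_{\bmP_{a_t},\rmK}$ is the full two-dimensional diagonal torus of $\bmG$, whereas $\bmA_{\bmQ_{a_t},\rmK'}$ is the one-dimensional split center of $\GL_2\times\GL_2\cap\SL_4$; so $\bmA_{\bmP_{a_t},\rmK}\not\subset\bmA_{\bmQ_{a_t},\rmK'}$. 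The actual relation between the two decompositions is finer, in the style of the ``relative horospherical'' comparison of Section~\ref{secRelationHoro1}: a part of $\rmA_{\bmP,\rmK}$ is absorbed into $\rmM_{\bmQ,\rmK'}$. Any complete proof of the lemma, yours or the paper's, needs to address this step rather than assert it.
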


\begin{proof}
For simplicity write $\bmP:=\bmP_{a_t}$ and $\bmQ=\bmQ_{a_t}$.
Indeed, it suffices to check that $\iota$ sends $\bmU_{\bmP}$ to $\bmU_{\bmQ}$, $\bmA_{\bmP,\rmK}$ to $\bmA_{\bmQ,\rmK'}$ and $\bmM_{\bmP,\rmK}$ to $\bmM_{\bmQ,\rmK'}$.

Indeed $\iota(\bmU_{\bmP})=\bmU_{\bmQ}$ follows from definitions of $\bmP_{a_t}$ and $\bmQ_{a_t}$.

The cocharacter $a_t$ is conjugate within $\bmP$ to a unique $\R$-cocharacter $b_t$ of $\bmA_{\bmP,\rmK}$. We still have $\bmP=\bmP_{b_t}$ and $\bmQ=\bmQ_{b_t}$. Moreover, as the image of $b_t$ is $\iota_{\rmK}$-stable, we deduce that its centralizer in $\bmP$ (resp. $\bmQ$) is $\iota_{\rmK}$ (resp. $\iota_{\rmK'}$)-stable and hence by uniqueness is equal to $\bmL_{\bmP,\rmK}$ (resp. $\bmL_{\bmQ,\rmK'}$). Thus $\iota(\bmL_{\bmP,\rmK'})$ is contained in $\bmL_{\bmQ,\rmK'}$ and we are done.
\end{proof}

As the reader might notice, the above proof works as long as $\bmU_{\bmP}$ is contained in $\bmU_{\bmQ}$. But the situation stated above is where we are going to apply the lemma.

\subsection{Proof of Proposition \ref{propLimGammaCell}}

\usetikzlibrary{decorations.pathmorphing}

The diagram is a brief summary of the proof.
\[
\begin{tikzcd}
      & \bmQ'\arrow[d,squiggly] & \bmQ \arrow[l,squiggly,"\text{conjugate}"']\\
      & \bmQ_{b_t} \arrow[d,squiggly] & \\
      \bmP_0=\bmP_{a_t}\arrow[r,squiggly] & \bmQ_{a_t}
\end{tikzcd}
\]

\begin{proof}

Now we start the formal proof.
First we fix  a norm on $\Lie(\bmG')_{\R}$ and an integral structure on $\Lie(\bmG')$ that is preserved by $\Gamma'$ under the Adjoint action. Let $k$ be the dimension of $\bmU_{\bmQ}$, then we can find a basis $v_1,...,v_k$ of $\Lie(\bmU_{\bmQ})$ consisting of integral vectors.

By assumption $(\lambda_ng_n)$ converges to a point in $e(\bmQ)$, thus $\alpha(a_{\bmQ}(\lambda_ng_n))^{-1}$ converges to $0$ for all $\alpha\in \Phi(\bmA_{\bmQ,\rmK},\bmQ)$. This implies that 
\begin{equation*}
\begin{aligned}
      &g_n^{-1} \lambda_n^{-1} v_i \to 0 ,\;\text{ for }i=1,...,k
    \implies &
     g_n^{-1}\lambda_n^{-1} \oplus_{i=1,...,k} v_i \to 0.
\end{aligned}
\end{equation*}

According to Corollary \ref{coroUnstaCritEpsilon}, applied to the direct product of $k$-copies of Adjoint representation restricted to (the diagonal action of) $\bmG$ with the product norm and integral structure, we find an $\ep>0$ such that if there is a vector of norm less than $\ep$ in the $\rmG$-orbit of an integral vector $w$, then $w$ is $\bmG$-unstable.

Now we pick some $n_0$ such that 
\begin{equation*}
    \norm{
    g_{n_0}^{-1} \lambda_{n_0}^{-1} \oplus_{i=1,...,k}v_i
    } \leq \ep.
\end{equation*}
Thus $ \lambda_{n_0}^{-1}\oplus_{i=1,...,k} v_i$ is $\bmG$-unstable. By Theorem \ref{thmHilbertMumford}, we find a $\Q$-cocharacter $b_t : \bmG_m \to \bmG$ such that 
\begin{equation*}
    \lim_{t\to\infty} b_t \lambda_{n_0}^{-1} v_i = 0,\;
    \text{ for }i=1,...,k.
\end{equation*}

Therefore, it follows that the unipotent radical of $\bmQ_{b_t}$ contains $\lambda_{n_0}^{-1}\bmU_{\bmQ}\lambda_{n_0}$, or equivalently, $\bmQ_{b_t}$ is contained in $\bmQ':=\lambda_{n_0}^{-1}{\bmQ}\lambda_{n_0}$.

Fix a maximal $\Q$-split torus $\bmS$ in $\bmP_{b_t}$ such that $b_t$ is contained in $\bmX_*(\bmS)$. Then for any $a_t$ sufficiently close to $b_t$ in $\bmX_*(\bmS)$, one has that $\bmQ_{a_t}$ is contained in $\bmQ_{b_t}$. Now we choose such an $a_t$ that is generic, i.e. the associated $\bmP_0:=\bmP_{a_t}$ is a minimal $\Q$-parabolic subgroup of $\bmG$.

By Theorem \ref{thmReducThy}, after passing to a subsequence, there exists $c_0\in \bmG(\Q)$ and a Siegel set $\frakS_{\omega_0,t}$ associated with $(\bmP_0,\rmK)$ such that $\{g_n\}$ is contained in $(\Gamma\cap\rmG)c_0\frakS_{\omega_0,t}$.
Hence there exists $\gamma_n\in \Gamma\cap \rmG$ such that $g_n$
is contained in $\gamma_n^{-1} c_0 \frakS_{\omega_0,t}$. 
By Lemma \ref{lemGoingBtw}, $c_0^{-1}\gamma_ng_n$ is also contained in some Siegel set associated with $(\bmQ_{a_t},\rmK')$.

On the other hand, as $(\lambda_n g_n)$ converges to a point in $e(\bmQ)$, $(\lambda_{n_0}^{-1}\lambda_n g_n)$ converges to a point in $e(\bmQ')$. In particular $(\lambda_{n_0}^{-1}\lambda_n g_n)$ is contained in a Siegel set associated with $(\bmQ',\rmK')$. 
By Lemma \ref{lemGoingDown}, this sequence is also contained in a Siegel set associated with $(\bmQ_{a_t},\rmK')$. By enlarging the Siegel set, if possible, there is a common Siegel set $\frakS$ associated with $(\bmQ_{a_t},\rmK')$ such that 
\begin{equation*}
    g_n \in \gamma_n^{-1}c_0\frakS \bigcap 
    \lambda_n^{-1}\lambda_{n_0} \frakS
\end{equation*}
for all $n$. In particular
\begin{equation*}
    \lambda_{n_0}^{-1}\lambda_n\gamma_n^{-1}c_0\frakS \bigcap 
     \frakS \neq \emptyset.
\end{equation*}
Now Theorem \ref{thmReducThy} together with Lemma \ref{lemGoingDown} implies that after passing to a subsequence,
\begin{equation*}
    \lambda_{n_0}^{-1}\lambda_n\gamma_n^{-1} =
    \lambda_{n_0}^{-1}\lambda_{n_1}\gamma_{n_1}^{-1}
\end{equation*}
for some $n_1$.
By taking $c:= \lambda_{n_1}\gamma_{n_1}^{-1}$ we find that
\begin{equation*}
    \lambda_n = c\gamma_n
\end{equation*}
for all $n$.
\end{proof}

\subsection{Proof of Proposition \ref{propLimCell}}

The following two diagrams form a brief summary of the proof.
\[
\begin{tikzcd}
        & \bmQ \arrow[d, squiggly]\\
      \bmP_{b_t} \arrow[r, squiggly] & \bmQ_{b_t}
\end{tikzcd}
\quad\quad
\begin{tikzcd}
      \bmP_{a_t}  & \bmQ= (\bmQ_{b_t})_I =\bmQ_{a_t} \arrow[l, squiggly]\\
      & \bmQ_{b_t}\arrow[u, squiggly]
\end{tikzcd}
\]

\begin{proof}

Now let us start the formal proof.
As in the last section (specialized to the situation $\lambda_n\equiv 1$), we find a $\Q$-cocharacter $b_t: \bmG_m \to \bmG$ such that  $\bmQ_{b_t}$ is contained in $\bmQ$. 

Let 
\begin{equation*}
    g_n = u_n a_n m_nk_n
\end{equation*}
be the horospherical coordinate of $g_n$ with respect to $(\bmP_{b_t},\rmK)$. 
By Lemma \ref{lemGoingBtw}, this is also the horospherical coordinate of $g_n$ with respect to $(\bmQ_{b_t},\rmK')$.
As $([g_n])$ converges to a point of $e(\bmQ)$, $\{g_n\}$ is contained in a Siegel set associated with $(\bmQ, \rmK')$. 
By Lemma \ref{lemGoingDown}, $\{g_n\}$ is then contained in some Siegel set associated with $(\bmQ_{b_t},\rmK')$. 
By Lemma \ref{lemGoingUp}, after passing to a subsequence, there exists $I\subset \Delta(\bmA_{\bmQ_{b_t}},\bmQ_{b_t})$ such that 
for $\alpha \in \Delta(\bmA_{\bmQ_{b_t}},\bmQ_{b_t})$,
\begin{equation*}
    \begin{aligned}
         \alpha(a_n) \to \infty &\iff \alpha \in I\\
         \alpha(a_n) \text{ bounded } &\iff \alpha \notin I
    \end{aligned}
\end{equation*}
and $\bmQ= (\bmQ_{b_t})_I$. 
Here bounded means that, as $\{g_n\}$ is contained in a Siegel set, both from above and from below.

Take a $\Q$-lift of $\widetilde{\bmS}_{\bmP_{b_t}}$ of ${\bmS}_{\bmP_{b_t}}$ in $\bmP_{b_t}$. Then $\widetilde{\bmS}_{\bmP_{b_t}}$ is canonically conjugate to $\bmA_{\bmP_{b_t}}$ via some element in $\rmU_{b_t}$ and we let $s_n$ be the image of $a_n$ under this conjugation.

For $\alpha \in  \Delta(\bmA_{\bmQ_{b_t}},\bmQ_{b_t})$, write $\widetilde{\alpha}$ for its pull-back to  $\widetilde{\bmS}_{\bmP_{b_t}}$, which are actually $\Q$-characters of  $\widetilde{\bmS}_{\bmP_{b_t}}$. So we have

\begin{equation*}
    \begin{aligned}
          \widetilde{\alpha}(s_n) \to \infty &\iff \alpha \in I\\
          \widetilde{\alpha}(s_n) \text{ bounded } &\iff \alpha \notin I.
    \end{aligned}
\end{equation*} 
Therefore, if we denote by $\la \cdot,\cdot \ra$ the natural pairing between $\bmX^*(\widetilde{\bmS}_{\bmP_{b_t}})$ and $\bmX_*(\widetilde{\bmS}_{\bmP_{b_t}})$, then there exists $a_t \in \bmX_*(\widetilde{\bmS}_{\bmP_{b_t}})$ such that 
\begin{equation*}
    \begin{aligned}
         \la \widetilde{\alpha}, a_t \ra >0  &\iff \alpha \in I\\
         \la  \widetilde{\alpha}, a_t \ra =0 &\iff \alpha \notin I.
    \end{aligned}
\end{equation*}
This shows that $\bmQ=(\bmQ_{b_t})_{I}=\bmQ_{a_t}$. By applying Lemma \ref{lemGoingBtw}, we see that $([g_n])$ converges to a point of $e(\bmP_{a_t})$ in ${}_{\Q}\overline{\rmX}^{\BS}$.
\end{proof}

\section*{Acknowledgement}
We are grateful to discussions with Osama Khalil, Nimish Shah and Pengyu Yang.

\bibliographystyle{amsalpha}
\bibliography{ref}
\end{document}